\documentclass[12pt]{article}
\usepackage{amsmath, amsthm}
\usepackage{amsmath}
\usepackage{amsfonts}
\usepackage{amssymb}
\usepackage{mathrsfs}
\usepackage{color}

\marginparwidth 0pt
\oddsidemargin  0pt
\evensidemargin  0pt
\marginparsep 0pt
\topmargin   -0.5in
\textwidth   6.5in
\textheight  9in
\usepackage{graphicx}
\linespread{1.25}

\newtheoremstyle{thm}{1.5ex}{1.5ex}{\itshape\rmfamily}{}
{\bfseries\rmfamily}{}{2ex}{}

\newtheoremstyle{rem}{1.3ex}{1.3ex}{\rmfamily}{}
{\itshape}
{} {1.5ex}{}

\theoremstyle{thm}

\newtheorem{thm}{Theorem}[section]

\newtheorem{cor}[thm]{Corollary}
\newtheorem{corollary}[thm]{Corollary}
\newtheorem{lemma}[thm]{Lemma}
\newtheorem{prop}[thm]{Proposition}
\newtheorem{proposition}[thm]{Proposition}

\theoremstyle{definition}
\newtheorem{definition}[thm]{Definition}
\newtheorem{remark}[thm] {Remark}
\newtheorem*{remm}{Remark}


\def\t{_{\text{\tiny T}}}

\def\b{\textcolor{blue}}
\def\e{\varepsilon}

\usepackage{color} 

\definecolor{purple}{rgb}{0.65, 0, 1}
\definecolor{orange}{rgb}{1,.5,0}




\begin{document}

%
%
%
%
%
%
%
\title
{\Large Transport and Equilibrium in Non--Conservative Systems \footnotemark[0]}
\author
{ L. Chayes$^{\dagger}$ and H. K. Lei$^1$}
\date{}
\maketitle

\footnotetext[0]{AMS subject classifications: 35A99; 49Q99; 70F45; 76R99.}
\footnotetext[1]{thshelen9@gmail.com}

\vspace{-4mm}
\centerline{${}^{\dagger}$\textit{Department of Mathematics, University of California at Los Angeles}}

\abstract{We study, in finite volume, a grand canonical version of the McKean--Vlasov equation where the total particle content is allowed to vary.  The dynamics is anticipated to minimize an appropriate grand canonical free energy; we make this notion precise by introducing a metric on a set of positive Borel measures without pre--prescribed mass and demonstrating that the dynamics is a gradient flow with respect to this metric. Moreover, we develop a JKO--type scheme suitable for these problems.  The latter ideas have general applicability to a class of second order non--conservative problems.  For this particular system we prove, using the JKO--type scheme, that under certain conditions -- not too far from optimal -- convergence to the uniform stationary state is exponential with a rate which is independent of the volume.  By contrast, in related conservative systems, decay rates scale (at best) with the square of the characteristic length of the system.  This suggests that a grand canonical \textit{approach} may be useful for both theoretical and computational study of large scale systems.  }

\section{Introduction}
This paper concerns the evolution and the convergence to equilibrium for a certain class of non--linear diffusion equations which may vaguely be described as of the 
McKean--Vlasov or Keller--Segel type.  Such systems have been well studied in recent years; here the primary distinction will be that the total mass is not conserved locally in time but, rather, is globally determined by the analogue of a \textit{Lagrange multiplier} which is known as the \textit{chemical potential} \b{(see e.g., \cite{RB}, page 129)}.  
Secondly, we work in finite volume.  This setting is arguably (see \cite{CP}) the  physically sensible approach to the mathematical study of approximately homogeneous fluids described by these dynamics.  
Extensive behavior -- static or dynamic -- can only emerge as the infinite volume limit of finite systems where the total mass scales with the volume.  
In this context, the non--conservative setup (\textsc{aka} grand canonical) 
has distinct advantages over its conservative (\textsc{aka} canonical) counterpart.  Indeed, as is quite well known
(see, e.g., \cite{CP})
the latter generically has relaxation times which scale with a power of the characteristic length of the system.  
Here (under some lenient conditions on the initial data and parameter values) 
we demonstrate an exponential convergence to equilibrium with a rate that is uniform in the volume. 
Moreover, this will be proved under conditions where the driving functional relevant to the problem does not necessarily enjoy convexity properties\footnotemark[3].

\footnotetext[3]{These results should be contrasted with several notable earlier works e.g., \cite{CMV} which treat systems in \textit{a priori} infinite volume and obtain exponential convergence to equilibrium with a rate which -- necessarily -- is uniform in volume.  The aforementioned pertain to conservative systems with finite mass; in the absence of external constraints all mass would eventually drift away.  So, in these works, mass is confined by an external potential which render the setting to an effectively finite--volume problem.  Moreover, the curvature of the confining potential provides uniform convexity which drives the exponential convergence.  Scaling (or linear response theory) immediately shows that the actual rate of convergence is the curvature itself which, in turn, is the square of the \textit{effective} length--scale of the system.  The curvature dependence of the rate is explicit in the statement of Theorem 2.1 in \cite{CMV} (c.f.~equation (2.8)).}
Our proofs of these assertions -- precise statements will be presented at the close of this section -- require the parallel development of a theory of 
\textit{optimal transport} for non--conservative systems. 
In particular, as will be
outlined in Section \ref{JKO} below, this necessitates the construction of a distance between positive $L^2$--functions (which, with additional labor, might be extended to general  Borel measures). 
And, associated with this distance and dynamics -- as presented in Section \ref{JKO} -- will be a JKO--type scheme \cite{JKO}, which constitutes the core of the proof. 

Here it is remarked that, since the start of this research, there has been a parallel development of some of these \b{ideas in \cite{LMS}, \cite{KMV} and \cite{CPSV} (also see \cite{LM} and references therein)} in the context of reaction diffusion equations.  However, for us, the construction of a \emph{framework} is only the preliminary step: Our efforts culminate in tangible results for the system which will be described in Eq.\eqref{GCD}.  \b{Moreover, while our focus here is on a particular equation, the methodologies we develop can certainly be applied to a variety of similar systems.}

On a more practical note, it is emphasized that while the equation we will study is akin to a reaction diffusion system, the results we have obtained will not apply to \emph{actual} reaction diffusion systems which, ultimately, \emph{are} conservative.  In particular, unless the overall density is already homogeneous, equilibrium times in reaction diffusion systems will be dominated by diffusive modes which necessitates that the relaxation times scale with the square of the characteristic length of the system. However, in the \emph{grand canonical} (hence non--conservative) versions of these reaction diffusion systems it is anticipated that the convergence rates for uniform equilibria 
will be independent of the volume; similar considerations apply for the types of problems treated in e.g., \cite{CMV}.

At this stage we must underscore some \ae sthetic limitations:  While in conservative cases, the JKO \textit{schemes} necessarily pertain to the dynamic, the \b{underlying} distance involved, \b{usually the Wasserstein distance,} is ``universal" depending e.g., only on the ambient space.  In the current cases, as will become clear, what emerges is that the distance itself \b{evidently} depends on the particulars of the dynamical equation.  
(A somewhat analogous situation -- in mass conserved cases -- was considered in \cite{FGY}.)
Nevertheless we remark that even without the JKO scheme, the grand canonical approach to this general set of problems may have distinctive advantages over the canonical versions.  In this regard, it should be noted that for the problems studied here, for a.e.\hspace{- 3 pt} value of the chemical potential, the steady state solutions of the two systems coincide.  Thus, while exponential convergence uniform in volume is not to be expected in the high density phase, it is not too much to hope that in general the grand canonical systems equilibrate in a reasonable computational time frame.  The corresponding conservative versions often appear to be computationally unviable.

The central focus of this paper concerns the analysis of an inhomogeneous version of the McKean--Vlasov equation in which matter can effuse into and out of the system.  The usual conservative version can be derived in a variety of contexts; the original rendition presumably dates back to \cite{Mc}.  The non--conservative version also admits several derivations.  For the purposes of this motivational section, we will provide, in Subsection \ref{DYN}, a common (sketch of a) derivation based on familiar interacting particle models.  This has the distinct advantage that it connects directly to the \textit{thermodynamics}  (free energetics) which underlie these evolutions.  The latter, which can always be analyzed without recourse to dynamics, is the subject of Subsection \ref{Motivation} below.  In the forthcoming subsections, there will be no pretense to a complete mathematical analysis (however, a full derivation may emerge in some future work).  

\subsection{Motivation}
\label{Motivation}
Consider a function $N(x,t)$ obeying the McKean--Vlasov dynamic
\begin{equation}
\label{ABC}
\frac{\partial N}{\partial t}  =  \triangle N 
+ \nabla \cdot (N\nabla w_{N})
\end{equation}
where
$$
w_{N}(x)  :=  \int_{\mathbb T_L^d} W(x-y) N(y)~ dy.
$$

It may be assumed without too much loss of generality
that $W(\cdot)$ depends only on the modulus of its argument.
While a variety of ambient spaces are possible, for simplicity here and throughout this work, we will use 
$\mathbb T_{L}^{d}$, the $d$--dimensional torus of side length $L$ as indicated above.  The $L^{1}$--norm of $N$ is preserved in time and with 
$\int_{\mathbb T_L^d} Ndx =: \vartheta L^{d}$, this is precisely the problem studied in \cite{CP}.  
As is well known  (e.g., this is discussed in \cite{V}, especially Ch.~8) 
Eq.~\eqref{ABC} is a gradient flow with respect to the Wasserstein distance for the (canonical) functional
$$
\mathscr F_{\vartheta}(N) :=  \int_{\mathbb T_L^d}(N\log N - N)~dx + \frac{1}{2}\int_{\mathbb T_L^d \times \mathbb T_L^d} W(x-y)N(x)N(y)~dxdy.
$$
In the context of minima for $\mathscr F_{\vartheta}$ and/or evolution according to Eq.~\eqref{ABC} it is preferable that $W$ satisfy a condition known as H--stability which, in the present setup, reads that for all $m(x)$ with $m(x) \geq 0$, 
$$
\int_{\mathbb T_L^d \times \mathbb T_L^d} W(x-y) m(x)m(y)~dxdy \geq 0.
$$

We take some time to recollect some results for the minimizers of 
$\mathscr F_{\vartheta}(\cdot)$ all of which are proved in \cite{CP} but some of which date back to an earlier epoch: See \cite{KM}, \cite{vK}, \cite{G}, \cite{GP}, \cite{LP}, \cite{GK}.  It is assumed throughout that $W$ satisfies the H--stability condition.
If $\vartheta$ is sufficiently small, $N\equiv \vartheta$ is the unique minimizer.  When $W$ is of positive type, 
implying convexity of 
$\mathscr F_{\vartheta}(\cdot)$, this actually holds for all $\vartheta$.  Otherwise, the uniform state becomes (linearly) unstable at $\vartheta = \vartheta^\sharp$ which is given by the inverse of the maximum of the absolute value of the negative Fourier modes of $W$.  However, under fairly general circumstances, the existence of non--uniform minimizers
occurs at $\vartheta = \vartheta_{\t} < \vartheta ^{\sharp}$; for 
$\vartheta > \vartheta_{\t}$, the uniform state is no longer a global minimizer.

The grand canonical generalization of $\mathscr F_{\vartheta}$ wherein the integral of $N$ is not fixed is given by
\begin{equation}
\label{GCF}
\mathscr G_{\mu}(N) 
:=  \int_{\mathbb T_L^d}(N\log N - [N + \mu N])~dx + \frac{1}{2}\int_{\mathbb T_L^d \times \mathbb T_L^d} W(x-y)N(x)N(y)~dx~dy
\end{equation}
where, as mentioned earlier, $\mu$ is called the chemical potential.  
Here it is seen that the H--stability condition is, for all intents and purposes, essential.  
(It is also worth noting that some of the older results alluded to above were actually established under the jurisdiction of this grand canonical functional.)  Let us summarize without proof the essential results needed for the background of this work.  
For fixed $\mu$, the set of minimizers is non--empty.  There are well defined  upper and lower integrated densities associated with each $\mu$ both of which are realized by elements in this set. These integrated densities are (both) strictly monotone 
and coincide for a.e.~$\mu$.  If $\mu$ is sufficiently small then the uniform state is the unique minimizer.  The density in the uniform state is given by $\text{\textsc{m}}_{0} = \text{\textsc{m}}_{0}(\mu)$ and satisfies the equation
\begin{equation}
\label{ASKY}
\text{\textsc{m}}_{0}  =  \text{e}^{\mu}\text{e}^{-w\text{\textsc{m}}_{0}}
\end{equation}
with $w = \int_{\mathbb T_{L}^{d}}W(x)dx$.  
It is noted that $N \equiv \text{\textsc{m}}_{0}$ is always a stationary state for $\mathscr G_{\mu}(\cdot)$, i.e., it satisfies the relevant Euler--Lagrange equation which is known in this context as the
Kirkwood--Monroe equation
\cite{KM}.

In particular, $N \equiv \text{\textsc{m}}_{0}$ remains the global minimizer till 
a point of discontinuity  $\mu_{\t}$ is reached where the upper and lower densities do not coincide and, in fact, bracket 
$\vartheta_{\t}$.  For values of $\mu$ greater than $\mu_{\t}$ the uniform density is no longer the minimizer and, at a strictly higher chemical potential, $\mu_{\sharp}$ -- the value of $\mu$ such that $\text{\textsc{m}}_{0} = \vartheta_{\sharp}$ -- the uniform state becomes linearly unstable.

 The implication is that the non--uniform minimizer for $\mathscr F_{\vartheta}(\cdot)$ at 
$\vartheta = \vartheta_{\t}$ is non--homogeneous \b{and presumably} cannot be understood without first understanding the grand canonical version of the transition.  Moreover, simulations of the canonical dynamics 
at $\vartheta \sim \vartheta_{\t}$ 
may require unmanageable computational time scales till a non--uniform minimizer is reached.  See, e.g., \cite{CP} Theorem 2.11.  
But before such questions can be addressed for the grand canonical problem, a dynamic must be presented which corresponds to the functional 
$\mathscr G_{\mu}(\cdot)$.  This is the topic of our next subsection.

\subsection{Dynamics}
\label{DYN}

While it is clear on general grounds that the ``correct'' equation for grand canonical dynamics involves the augmentation of Eq.~\eqref{ABC} by inhomogeneous terms, the form of these terms is not particularly obvious.  
Moreover, the guiding principle is somewhat nebulous: \b{The physics dictates an ``intrinsic uncertainty'' in the particle content of the system; i.e., there is a probability \emph{distribution} for the number of particles.  Here, this translates into an intrinsic uncertainty in $\|N\|_{L^1}$.}  While these matters are well understood in equilibrium, it is not so clear how this uncertainty is supposed to propagate dynamically.
The answer lies in the stipulation that the (nebulous) physics of this intrinsic uncertainty is equivalent, at the microscopic level, to the circumstances where 
individual particles can appear and disappear according to (a) the energetics of the complementary configuration and (b) a parameter, already mentioned, called the \emph{chemical potential}.  In the remainder of this subsection we will provide motivation for the form of the dynamics we wish to study, but this content is not essential to the remainder of this work.  The disinterested reader can proceed directly to Eq.~\eqref{GCD}.  

Let us turn to a (brief and informal) discussion of the relevant lattice models both in the context of equilibrium and dynamics.  Since we have in mind a finite volume problem in $\mathbb R^d$, or on the torus, let $A \subseteq \mathbb R^d$ (or $A = \mathbb T_L^d$) be some regular set and let $\mathbb A_\e$ denote the intersection of $A$ with $\mathbb Z_\e^d$, the integer lattice of spacing $\e$.  Letting $V$ denote the volume of $A$, the number of sites in $\mathbb A_\e$, denoted by $|\mathbb A_\e|$, is approximately $|A_\e| \simeq \e^{-d} V$.  We shall consider particle configurations $(\eta_{_{\mathbf X}}(j) \in \mathbb N \mid j \in \mathbb A_\e)$ where here, $\mathbb N$ includes zero.  Most of the discussion will concern the \emph{conservative} case: 
$$\sum_j \eta_{_{\mathbf X_j}} \equiv n,$$
which is considered to be fixed.  We concern ourselves with an informal discussion of the $\e \rightarrow 0$ limit with the scaling $n\e^d \rightarrow NV$ for some $N > 0$.  The advantage of the lattice discretization is that it enables the usage of \emph{particle systems} to induce dynamics in a straightforward fashion.

We re--emphasize that we make no claims to a rigorous derivation; we simply perform the analog of the 
calculations done in 
\cite{P}
for the Ising case wherein the 
Cahn--Hilliard and Cahn--Allen equations were acquired.  
Explicitly, we expand terms and, scaling time appropriately and neglecting correlations,
we retain only the leading order in $\varepsilon$.
As we will see, the resultant equation for the particle density 
$N$ at $x\in A$ is given by
$\frac{\partial N}{\partial t} = \nabla^{2}N + \nabla\cdot (N\nabla w_{N})$, i.e., exactly the McKean--Vlasov equation, Eq.~\eqref{ABC}.  Then, adding terms which allow the particle content to fluctuate, we shall arrive at the dynamical equation we wish to study. 

Let us now proceed with the discrete calculations.  Starting with (non--interacting) statics, we assign an \emph{a priori} weight $W(\mathbf X)$ to each configuration $\mathbf X$.  Later this will be augmented by an interaction expressed via a \emph{Hamiltonian}.  We choose, on a basis which is not entirely physical, the weights 
\begin{equation}\label{WTT} W(\mathbf X) = \left(\prod_{j \in \mathbb A_\e} \eta_{_{\mathbf X_j}}! \right)^{-1}.\end{equation}
It is noted that 
\[ \sum_{\mathbf X} W(\mathbf X) =: Z_{\e, n, V} = \frac{|\mathbb A_\e|^n}{n!},\]
so that, automatically
\[ \lim_{\e \rightarrow 0} \frac{1}{|\mathbb A_\e|} \log Z_{\e, n, V} = N \log N - N,\]
which is the \emph{free energy} of an ideal gas.  

Next, still in the context of a non--interacting system, let us introduce transition rates $T_{\mathbf X:\mathbf Y}$, the rate at which the system exhibits the configuration $\mathbf Y$ given that it is in the configuration $\mathbf X$.  For simplicity we will always restrict attention to transitions which only involve nearest neighbor jumps of a single particle: $T_{\mathbf X: \mathbf Y} = 0$ unless $\eta_{_{\mathbf Y}}(k) = \eta_{_{\mathbf X}}(k)$ for all $k$ except a pair $i, j$ with $\|i - j\| = \e$ in which case 
\[ \eta_{_{\mathbf Y}}(j) = \eta_{_{\mathbf X}}(j) \pm 1~~~\mbox{while}~~~ \eta_{_{\mathbf Y}}(i) = \eta_{_{\mathbf X}}(i) \mp 1,\]
provided that this move keeps both $\eta_{_{\mathbf Y}}(j)$ and $\eta_{_{\mathbf Y}}(i)$ nonnegative.  In other words, we only allow transitions in which a single particle is transferred to a neighboring site. 

If $\mathbb Q(\cdot)$ is a probability measure on the space of particle configurations the transition rates $T_{_{\mathbf X: \mathbf Y}}$ satisfy the condition of \emph{detailed balance} with respect to $\mathbb Q$ if, for every configuration $\mathbf X$ and $\mathbf Y$ 
\[ \mathbb Q(\mathbf X) T_{_{\mathbf X: \mathbf Y}} = \mathbb Q(\mathbf Y) T_{_{\mathbf Y: \mathbf X}}.\]
When detailed balance is satisfied, the measure $\mathbb Q$ is \emph{invariant} for the process.  For the weights in Eq.~\eqref{WTT} it is clear that detailed balance is satisfied if the rate of transfer of a particle from a given site to a neighboring site is equal (or proportional) to the number of particles at the (given) site.  

In this case we equivalently have $n$ particles executing independent random walks.  In particular, the behavior is weakly diffusive in the sense that if $\Omega_\e$ is the generator for this process, then 
\begin{equation}\label{TTR} \Omega_\e \eta{_{_\mathbf X}}(k) = \sum_{\ell: |\ell - k| = \e} \eta{_{_\mathbf X}}(\ell) - \eta{_{_\mathbf X}}(k) := (\triangle_\e \eta_{_{\mathbf X}})(k).\end{equation}
The right hand side, the discrete Laplacian, is weakly of order $\e^2$.  Since the left hand side more or less corresponds to a time derivative, this necessitates that time be scaled by $\e^2$, i.e., diffusive scaling.  We shall consider this a sufficient discussion of the non--interacting case.

Let us now turn to the problem of interactions.  In the context of classical equilibrium statistical mechanics, interactions are implemented by introducing a Hamiltonian which is a real--valued function of the configurations that we denote by $H(\mathbf X)$.  The \emph{canonical} equilibrium is defined as the probability measure on configurations which is given by the weights $W(\mathbf X) \text{e}^{-H(\mathbf X)}$.  As for dynamics, if $T'_{_{\mathbf X: \mathbf Y}}$ satisfies detailed balance for the non--interacting cases, it is seen that if we define (regardless of the precise form of $H$) the rates
\begin{equation}\label{RRT} T_{_{\mathbf X: \mathbf Y}} = T'_{_{\mathbf X: \mathbf Y}} \text{e}^{\frac{1}{2} [H(\mathbf X) - H(\mathbf Y)]},\end{equation}
then the resulting dynamics will satisfy detailed balance with respect to the canonical measures.  Here, we are interested in interactions which are of the mean--field type.  For $r > 0$, we let $W(r)$ be a smooth function, then we may take the Hamiltonian to be 
\[ H(\mathbf X) = \frac{\e^d}{2} \sum_{k, \ell} \eta_{_{\mathbf X}}(k) \eta_{_{\mathbf X}}(\ell) W_{k, \ell}\]
where $W_{k, \ell}$ is standing notation for $W(\|k - \ell\|)$.
In the above, the customary factor of $n^{-1}$ has been replaced by $\e^d$ and we also implement the convention that $W(0) \equiv 0$. It is noted that with the pre--factor of $\e^d$, the interaction associated with a single site, i.e., $\e^d \eta_{_{\mathbf X}}(\ell) \sum_k \eta_{_{\mathbf X}}(k)W_{k, \ell} $, is of order unity whereas the total interaction is of order $n$ which is ``extensive''.  

Next we calculate the quantity $\frac{1}{2} [H(\mathbf X) - H(\mathbf Y)]$ for the case where as particle has transferred from a particular site $i$ (where $\eta_{_{\mathbf X}}(i) > 0$) to a neighboring site $j$.  I.e., 
\[ \eta_{_{\mathbf Y}}(i) = \eta_{_{\mathbf X}}(i) - 1,~~ \eta_{_{\mathbf Y}}(j) = \eta_{_{\mathbf X}}(j) + 1;~~~ \eta_{_{\mathbf Y}} (k) = \eta_{_{\mathbf X}}(k), ~k \neq i, j.\]
(In the ensuing computations we will assume that $i$ is an interior site.)  The result of the above described computation is 
\[\frac{1}{2} [H(\mathbf X) - H(\mathbf Y)] = -\frac{\e^d}{2} W_{i, j} + \frac{\e^d}{2} \sum_\alpha \eta_{_{\mathbf X}}(\alpha) (W_{i, \alpha} - W_{j, \alpha}). \]
We will neglect the first term and denote the second term by $\frac{1}{2} [A_{_{\mathbf X}}(i) - A_{_{\mathbf X}}(j)]$.  Thus, for the site $i$, the rate of particle transfer from and to site $j$ is given in the display 
\[ -\eta_{_{\mathbf X}}(i) \text{e}^{\frac{1}{2} [A_{_{\mathbf X}}(i) - A_{_{\mathbf X}}(j)]} + \eta_{_{\mathbf X}}(j) \text{e}^{-\frac{1}{2} [A_{_{\mathbf X}}(i) - A_{_{\mathbf X}}(j)]},\]
where the second term is calculated by interchanging $i$ and $j$.  

We may expand these exponents, realizing that the differences $A_{_{\mathbf X}}(i) - A_{_{\mathbf X}}(j)$ are themselves of order $\e$.  The preceding display then reads 
\[ \begin{split} -\eta_{_{\mathbf X}}(i) &\left(1 + \frac{1}{2} [A_{_{\mathbf X}}(i) - A_{_{\mathbf X}}(j)] + \frac{1}{2} \left(\frac{1}{2} [A_{_{\mathbf X}}(i) - A_{_{\mathbf X}}(j)]\right)^2 + \dots \right) \\
&+ \eta_{_{\mathbf X}}(j) \left(1 - \frac{1}{2} [A_{_{\mathbf X}}(i) - A_{_{\mathbf X}}(j)] + \frac{1}{2} \left(\frac{1}{2} [A_{_{\mathbf X}}(i) - A_{_{\mathbf X}}(j)]\right)^2 + \dots \right).
\end{split}\]
Now we claim that all but the first 2 terms in each of the expansions can be neglected.  Indeed, diffusive scaling indicates that we only need to retain to order $\e^2$.  The terms not written are \emph{a priori} at least of order $\e^3$ and higher.  As for the third terms in the preceding display: The presence of $[A_{_{\mathbf X}}(i) - A_{_{\mathbf X}}(j)]^2$ is already of order $\e^2$ but then they combine to yield the pre--factor of $\eta_{_{\mathbf X}}(i) - \eta_{_{\mathbf X}}(j)$ which is weakly of order $\e$.  Thus we may stipulate that 
\begin{equation} \label{GGT}\Omega_\e \eta_{_{\mathbf X}}(i) \simeq \sum_{j: \|j - i\|= \e} [\eta_{_{\mathbf X}}(j) - \eta_{_{\mathbf X}}(i)]  + \frac{1}{2} [\eta_{_{\mathbf X}}(i) + \eta_{_{\mathbf X}}(j)] [ A_{_{\mathbf X}}(j) - A_{_{\mathbf X}}(i)].\end{equation}

The first term in the above display has already been identified as the discrete Laplacian $\Delta_\e\eta_{_{\mathbf X}}(i)$.  The second term can be written as
\[\begin{split} \frac{1}{2} [\eta_{_{\mathbf X}}(i) &+ \eta_{_{\mathbf X}}(j)] [ A_{_{\mathbf X}}(j) - A_{_{\mathbf X}}(i)]\\
&= \eta_{_{\mathbf X}}(i) \sum_{j: \|j - i\|= \e} [A_{_{\mathbf X}}(j) - A_{_{\mathbf X}}(i)] + \frac{1}{2} \sum_{j: \|j - i\| = \e} [\eta_{_{\mathbf X}}(j) - \eta_{_{\mathbf X}}(i)][A_{_{\mathbf X}}(j) - A_{_{\mathbf X}}(i)]. \end{split}\]
Now we identify the first term on the right hand side as $\eta_{_{\mathbf X}}(i) \Delta_\e A_{_{\mathbf X}}(i)$.  To address the second term, we recall the forward and backward lattice gradients (and divergences): Let $f(i)$ be a lattice function and $\hat e_s$ a standard unit vector, then 
\[ \nabla_\e^+ f(i) := \sum_{s=1}^d [f(i + \e\hat e_s) - f(i)] \hat e_s\]
\[ \nabla_\e^- f(i) := \sum_{s=1}^d [f(i) - f(i - \e\hat e_s)] \hat e_s.\]
In this language, the term of interest becomes 
\[ \frac{1}{2} \sum_{j: \|j - i\|=\e} [\eta_{_{\mathbf X}}(j) - \eta_{_{\mathbf X}}(i)][A_{_{\mathbf X}}(j) - A_{_{\mathbf X}}(i)] = \frac{1}{2} \left[\nabla_\e^+ \eta_{_{\mathbf X}} (i) \cdot \nabla^+ A_{_{\mathbf X}}(i) + \nabla^- \eta_{_{\mathbf X}}(i) \cdot \nabla_\e^- A_{_{\mathbf X}}(i)\right].\]

To conclude the conservative case we observe that the stated dynamics for $N(x, t)$ in Eq.~\eqref{ABC} reads (at least classically) that 
\[ \frac{\partial N}{\partial t} = \triangle N + \nabla \cdot(N \nabla w_N) = \triangle N + N \triangle w_N + \nabla N \cdot \nabla w_N.\]
This has been formally reproduced by $\Omega_\e \eta_{_{\mathbf X}}$, the generator for the discrete process acting on the particle density.

The preceding readily generalizes to the case where the particle content is allowed to vary.  In the context of equilibrium statistical mechanics this is implemented by the introduction of the chemical potential, $\mu \in \mathbb R$, and defining the weights 
\[ \tilde W(\mathbf X) = W(\mathbf X) \cdot \text{e}^{-H(\mathbf X)} \cdot \text{e}^{\mu \sum_j \eta_{_{\mathbf X}} (j)}\]
of the \emph{grand canonical} (probability) distribution for the configurations $\mathbf X$.  This is formally the same as $H \rightarrow H - \mu n$ (although, strictly speaking, the latter is \emph{not} referred to as a ``Hamiltonian'') and the transition rates  in Eq.~\eqref{RRT} may be applied.  Starting with the case $H = 0$, we augment the result of Eq.~\eqref{TTR} with the non--conservative transitions allowing $\eta_{_{\mathbf X}}(k) \rightarrow \eta_{_{\mathbf X}}(k) \pm 1$ at rate proportional to $\text{e}^{\frac{1}{2} \mu} - \eta_{_{\mathbf X}}(k) \text{e}^{-\frac{1}{2} \mu}$
(which may be familiar in the context of birth and death chains).  Inserting the full Hamiltonian, the result for the non--conservative transitions becomes 
\[ \Theta_\e \eta_{_{\mathbf X}}(k) \propto \text{e}^{\frac{1}{2} (\mu - A_{_{\mathbf X}}(k))} - \eta_{_{\mathbf X}}(k) \text{e}^{-\frac{1}{2} (\mu - A_{_{_\mathbf X}}(k))}. \]
Consistent with diffusive scaling, we take the constant of proportionality to be $\e^2$ and add the above $\Theta_\e$ to the old $\Omega_\e$ from Eq.~\eqref{GGT} in order to acquire the full generator.  The resultant discrete dynamics is then seen to be in correspondence with 
\begin{equation}
\label{GCD}
\frac{\partial N}{\partial t}   =  \left[\nabla^{2}N + \nabla\cdot (N \nabla w_N)\right]
+ \left[\text{e}^{\frac{1}{2}(\mu - w_N)} - N\text{e}^{-\frac{1}{2}(\mu - w_N)}\right].
\end{equation}
The equation above is the subject of our analysis.  It is here noted that 
$N \equiv \text{\textsc{m}}_{0}$ is always a stationary solution.  The purpose of this work is to show that under conditions of sufficient thermodynamic stability for $\text{\textsc{m}}_{0}$, 
and suitable conditions on the initial density, 
the density converges to this uniform state exponentially with a rate that is \emph{independent} of the volume.  

\subsection{Statements of Main Theorems}

We conclude this section by stating our main result.  Hereafter, we shall use the notation $\text{\textsc{m}}_{0}$ to denote not only the numerical value but also the \textit{stationary density} that is identically equal to this value; it is assumed that no confusion will arise.

We need a few preliminary definitions: For $\kappa \in (0, \frac{1}{2})$ we define the set of functions
\[ \mathcal B_\kappa = \{ N: \mathbb T_L^d \rightarrow \mathbb R: \kappa \textsc{m}_0 < N < \frac{1}{\kappa} \textsc{m}_0\}.\]
Also, for $\alpha > 0$ we define
\[ v_\alpha = \sup_k |k|^\alpha |\hat W(k)|,\]
\b{where $\hat f(k)$ denotes the $k^{\text{th}}$ Fourier coefficient of $f$:}
\[ \b{\hat f(k) = \int_{\mathbb T_L^d} f(x) e^{ik\cdot x}~dx. }\]
(The factor of $L^d$ is restored in the inverse transformation.) 
Moreover, for a function $Y$ and any $m > 0$, 
\begin{equation}\label{DKBK}\b{\|Y\|_{\mathcal D_m} = \frac{1}{L^d}\sum_k |k|^m |\hat{Y}(k)|.}\end{equation}

The main theorem is as follows:

\begin{thm}[Main Theorem] \label{MDKR}
Let $W$ be an $H$--stable interaction kernel with finite range (i.e., $W$ vanishes outside a ball of finite radius around the origin which is assumed to be small relative to $L$).  Under the regularity assumptions that $v_4 < \infty$ and $\|W\|_{\mathcal D_2} < \infty$ let us suppose that $\text{\textsc{m}}_{0}$ is sufficiently small so that the conclusion of 
Proposition \ref{OGTR} holds for some $\kappa^{\prime} < \frac{1}{2}$.  In addition, suppose the initial 
density $N_{0}$ is in $\mathcal B_{\kappa^{\prime}}$ and $\|\log N_0\|_{\mathcal D_2} < \infty$.

Then we have that for all $t$,
$$
\mathscr G_{\mu}(N_{t}) - \mathscr G_{\mu}(\text{\textsc{m}}_{0})
\leq
\left[\mathscr G_{\mu}(N_{0}) - \mathscr G_{\mu}(\text{\textsc{m}}_{0})\right] \cdot \rm{e}^{-\lambda^{\dagger} t}
$$
for some $\lambda^{\dagger} > 0$.  Moreover, the same type of estimate holds for the $L^{2}$--squared difference with the stationary solution:
$$
\|N_{t} - \text{\textsc{m}}_{0}\|_{L^{2}}^{2}
\leq
\b{\frac{1}{\sigma}}\left[\mathscr G_{\mu}(N_{0}) - \mathscr G_{\mu}(\text{\textsc{m}}_{0})\right] \cdot \rm{e}^{-\lambda^{\dagger} t}
$$
for some $\sigma > 0$. 
\end{thm}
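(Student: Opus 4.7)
The overall strategy is to combine the gradient-flow structure furnished by the JKO scheme of Section \ref{JKO} with a coercivity inequality relating the dissipation to the free-energy gap, then close by Gronwall. The two components of the non-conservative distance give rise to two contributions to the dissipation, a transport term and a reaction term, and part of the difficulty is to play them off against each other when the interaction $W$ is not necessarily of positive type.

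Using the JKO approximation, I would first record the energy-dissipation identity along the flow:
\begin{equation*}
\frac{d}{dt}\mathscr G_{\mu}(N_t) = -\mathcal D(N_t), \qquad \mathcal D(N) := \int_{\mathbb T_L^d} N|\nabla \phi|^2\,dx + \int_{\mathbb T_L^d} e^{(\mu - w_N)/2}\,\phi(e^\phi - 1)\,dx,
\end{equation*}
where $\phi := \log N - \mu + w_N$ is the functional derivative of $\mathscr G_{\mu}$. Both integrands are pointwise non-negative: the first arises from the transport part of Eq.\eqref{GCD}, the second from the reaction (creation/annihilation) part. At the discrete level this is a consequence of the JKO optimality condition; at the continuum level it is an integration by parts against $\phi$ on Eq.\eqref{GCD}. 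In parallel, I would apply Proposition \ref{OGTR} to conclude that $\mathcal B_{\kappa^{\prime}}$ is invariant under the discrete JKO step and hence under the flow, so that the uniform pointwise bounds $\kappa^{\prime}\text{\textsc{m}}_{0} \le N_t \le \text{\textsc{m}}_{0}/\kappa^{\prime}$ are available for all $t$.

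The analytic heart, and the main obstacle, is the functional inequality
\begin{equation*}
\mathcal D(N) \ge \lambda^{\dagger}\,\bigl[\mathscr G_{\mu}(N) - \mathscr G_{\mu}(\text{\textsc{m}}_{0})\bigr] \qquad \text{for all } N \in \mathcal B_{\kappa^{\prime}},
\end{equation*}
with $\lambda^{\dagger}$ independent of $L$. To prove it I would use the decomposition
\begin{equation*}
\mathscr G_{\mu}(N) - \mathscr G_{\mu}(\text{\textsc{m}}_{0}) = \int_{\mathbb T_L^d}\bigl[N\log(N/\text{\textsc{m}}_{0}) - (N - \text{\textsc{m}}_{0})\bigr]\,dx + \tfrac{1}{2}\int\!\!\int W(x-y)\,\delta N(x)\,\delta N(y)\,dx\,dy,
\end{equation*}
with $\delta N := N - \text{\textsc{m}}_{0}$, which splits the gap into an entropic piece and an interaction piece. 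On $\mathcal B_{\kappa^{\prime}}$ the entropy piece is uniformly equivalent to $\|\delta N\|_{L^2}^2$, while the reaction dissipation dominates $\|\delta N\|_{L^2}^2$ since $\phi(e^\phi - 1) \ge c\phi^2$ on the bounded range of $\phi$ and $\phi$ is a bounded perturbation of $\delta N/\text{\textsc{m}}_{0}$. The delicate point is to dominate the possibly indefinite interaction form, which in Fourier reads $\tfrac{1}{2}\sum_k \hat W(k)|\widehat{\delta N}(k)|^2$: its negative modes must be absorbed into the transport dissipation $\int N|\nabla \phi|^2$, which controls $\sum_k k^2 |\widehat{\delta N}(k)|^2$ up to lower-order terms. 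Here the smallness of $\text{\textsc{m}}_{0}$ forces the linearized operator $1/\text{\textsc{m}}_{0} + \hat W(k)$ to be uniformly positive (linear stability of the uniform state), the Fourier decay assumptions $v_4 < \infty$ and $\|W\|_{\mathcal D_2} < \infty$ confine the dangerous modes to a bounded frequency band where the $k^2$ factor from transport absorbs them, and the hypothesis $\|\log N_0\|_{\mathcal D_2} < \infty$ (propagated along the flow via the JKO iterates) supplies the spectral control needed to make these manipulations rigorous.

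Granted this inequality, Gronwall yields the first bound of the theorem. For the $L^2$ bound, the same decomposition gives $\mathscr G_{\mu}(N) - \mathscr G_{\mu}(\text{\textsc{m}}_{0}) \ge \sigma\|N - \text{\textsc{m}}_{0}\|_{L^2}^2$ for some $\sigma > 0$ on $\mathcal B_{\kappa^{\prime}}$: the entropy piece dominates $\|\delta N\|_{L^2}^2$ by Taylor expansion and the interaction form is non-negative by H-stability. Combining with the first bound closes the theorem.
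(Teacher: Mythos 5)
Your proposal takes a genuinely different route from the paper. You aim for a direct dissipation inequality $\mathcal D(N) \geq \lambda^{\dagger}[\mathscr G_{\mu}(N)-\mathscr G_{\mu}(\textsc{m}_0)]$ followed by Gronwall, whereas the paper never writes the energy--dissipation identity at all. Instead the paper works entirely at the level of the discrete JKO iterates: it compares $N_{1}=\operatorname{Argmin}\mathbb J_{A}(N_{0},\cdot)$ against the explicit test density $M^{(0)}_{\lambda}=(1-h\lambda)N_{0}+h\lambda\,\textsc{m}_0$, uses the optimality inequality $\mathbb J_{A}(N_{0},N_{1})\leq\mathbb J_{A}(N_{0},M^{(0)}_{\lambda})$ together with the convexity from Proposition \ref{BBGG} to contract the free--energy gap by a factor $(1-h\lambda)$, and then bounds $\lambda$ from below by combining the estimate $\Omega_{N}\geq N^{1/2}$ with a second comparison involving $M^{(1)}_{\theta}$. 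The coercivity $\mathscr G_{\mu}(N)-\mathscr G_{\mu}(\textsc{m}_0)\geq\sigma\|N-\textsc{m}_0\|_{L^2}^2$ appears in both approaches as the Claim, but the paper never needs (or proves) the reverse inequality. Your route is conceptually tighter if the functional inequality can be established, and it bypasses the bookkeeping of the order--$h^{1/2}$ error terms that the paper has to track over $O(h^{-1})$ iterations.

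However, there is a genuine gap in your sketch of the functional inequality, precisely at the step you flag as delicate. You propose to absorb the negative Fourier modes of $\hat W$ into the transport dissipation $\int N|\nabla\phi|^{2}\,dx$, invoking the $k^{2}$ factor it furnishes. This cannot give a rate uniform in $L$: on $\mathbb T_{L}^{d}$ the nonzero frequencies begin at $|k|\sim 2\pi/L$, so the $k^{2}$ weight degenerates like $L^{-2}$ there, and for a generic $H$--stable kernel with negative modes at low frequency the transport term supplies arbitrarily little as $L\to\infty$ --- which is exactly the volume--dependence the theorem is designed to rule out. The correct observation (which is how the paper's Claim and Proposition \ref{BBGG} actually close) is that under the standing hypothesis on $\textsc{m}_0$, namely $\textsc{m}_0/\kappa<\vartheta^{\sharp}$, the entropy/reaction contribution alone already dominates the interaction: pointwise $\log(N/\textsc{m}_0)\cdot\delta N\geq(\kappa/\textsc{m}_0)(\delta N)^{2}$, so pairing $\phi=\log(N/\textsc{m}_0)+W*\delta N$ against $\delta N$ gives $\|\phi\|_{L^2}\geq(\kappa/\textsc{m}_0-1/\vartheta^{\sharp})\|\delta N\|_{L^2}$ without touching the transport term at all. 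With $\Omega_{N}\geq(\kappa\textsc{m}_0)^{1/2}$ this yields the dissipation lower bound uniformly in $L$. The transport dissipation is a positive bonus, not a load--bearing piece, and making it load--bearing is precisely what would break the volume--independence. A secondary point: you also ascribe to $v_4<\infty$, $\|W\|_{\mathcal D_2}<\infty$, and $\|\log N_0\|_{\mathcal D_2}<\infty$ a role in the spectral/coercivity argument, but in the paper these are regularity hypotheses used only in Appendix A to propagate the $\mathcal D_{2}$--norm along the JKO iterates and identify the $h\to0$ limit as the classical solution --- not to control the interaction form.
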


Moreover we also have:

\begin{thm}
\label{thmm}
Equation \eqref{GCD}
induces a natural distance $\mathbb D(\cdot, \cdot)$ defined \b{(at least) for Borel measures which have an $L^2$--density with respect to Lebesgue measure and are bounded below.}  Furthermore, there is a discretization scheme of the JKO--type associated with this distance which converges to the continuum evolution.  In particular, we have exponential decay in $\mathbb D(\cdot, \cdot)$:
\[ \mathbb D(N_t, \textsc{m}_0)^2
\leq
\b{\frac{g^2}{\sigma}}[\mathscr G_{\mu}(N_{0}) - \mathscr G_{\mu}(\text{\textsc{m}}_{0})]\text{e}^{-\lambda^{\dagger} t},
\]
where $\lambda^\dagger$ 
and $\sigma$ are the same as in the 
statement of the
main theorem.
\end{thm}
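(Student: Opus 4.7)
The plan has three components: (i) construct the distance $\mathbb D$ in a Benamou--Brenier spirit adapted to the non--conservative structure of Eq.(\ref{GCD}), (ii) associate a JKO minimizing--movement scheme and show that it converges to the continuum flow, and (iii) upgrade the exponential decay of $\mathscr G_{\mu}(N_{t}) - \mathscr G_{\mu}(\text{\textsc{m}}_{0})$ from the Main Theorem to exponential decay in $\mathbb D$.

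For (i), I would start from the observation that, setting $\Phi := \log N - \mu + w_{N} = \delta \mathscr G_{\mu} / \delta N$ and $\xi := \mu - w_{N}$, Eq.(\ref{GCD}) can be rewritten as $\partial_{t} N = \nabla\cdot(N\nabla \Phi) + s$ with $s = \text{e}^{\xi/2}(1-\text{e}^{\Phi})$. This suggests defining
\[
\mathbb D(N_{0},N_{1})^{2} := \inf \int_{0}^{1} \! \int_{\mathbb T_{L}^{d}}\! \bigl( N|v|^{2} + Q(N,s) \bigr) \, dx\, dt,
\]
where the infimum runs over curves $t \mapsto (N_{t}, v_{t}, s_{t})$ joining $N_{0}$ to $N_{1}$ weakly solving the continuity--with--source equation $\partial_{t} N + \nabla\cdot(Nv)=s$, and $Q(N,\cdot)$ is a convex cost whose Legendre dual is arranged so that the optimality conditions reproduce precisely the reaction rate of Eq.(\ref{GCD}). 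Symmetry in $(N_{0},N_{1})$ and the triangle inequality then follow from reparametrization and concatenation of admissible paths, while strict positivity is ensured by the two--sided $L^{\infty}$ control available inside $\mathcal B_{\kappa'}$.

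For (ii), given $\tau > 0$ and $N_{n}^{\tau}$, define $N_{n+1}^{\tau}$ as a minimizer of $\frac{1}{2\tau}\mathbb D(N_{n}^{\tau},\cdot)^{2} + \mathscr G_{\mu}(\cdot)$ over $\mathcal B_{\kappa'}$. Existence follows from lower semi--continuity of both functionals together with compactness furnished by the bounds defining $\mathcal B_{\kappa'}$ and the entropy--production estimate. Computing the first variation and expanding in $\tau$ identifies the update with an implicit--Euler step for Eq.(\ref{GCD}); De~Giorgi--type compactness for the piecewise--constant interpolants, combined with uniqueness of weak solutions, then yields $\tau \to 0$ convergence and establishes the gradient--flow dissipation identity
\[
-\frac{d}{dt}\mathscr G_{\mu}(N_{t}) = |\dot N_{t}|^{2}_{\mathbb D}.
\]
For (iii), integrating this identity from $t$ to $\infty$ and invoking the Main Theorem gives $\int_{t}^{\infty} |\dot N_{s}|^{2}_{\mathbb D} \, ds \le [\mathscr G_{\mu}(N_{0}) - \mathscr G_{\mu}(\text{\textsc{m}}_{0})] \text{e}^{-\lambda^{\dagger} t}$. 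Partitioning $[t,\infty)$ along unit intervals $[t+k,t+k+1]$, the triangle inequality and Cauchy--Schwarz give
\[
\mathbb D(N_{t}, \text{\textsc{m}}_{0}) \le \sum_{k=0}^{\infty} \mathbb D(N_{t+k}, N_{t+k+1}) \le \sum_{k=0}^{\infty} \Bigl(\int_{t+k}^{t+k+1} |\dot N_{s}|^{2}_{\mathbb D} \, ds\Bigr)^{1/2},
\]
and the stated bound follows after summing the geometric series and folding the resulting prefactor into the constant $\sigma$.

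The main obstacle is step (i) together with the qualitative half of (ii): because the reaction rate in Eq.(\ref{GCD}) is exponential in $\Phi$, the appropriate dissipation is not a single quadratic form in $(v,s)$ but rather an entropy--type convex cost whose Legendre dual must be matched precisely to $\text{e}^{\xi/2}(1-\text{e}^{\Phi})$. Verifying that the resulting $\mathbb D$ is genuinely a distance --- finite between any two densities in $\mathcal B_{\kappa'}$ and satisfying the triangle inequality --- and that the JKO iterates stay inside $\mathcal B_{\kappa'}$ so that all quantities remain under control, is where the bulk of the technical effort is concentrated.
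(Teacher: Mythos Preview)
Your plan diverges from the paper's in all three components, and in (i) it misses the paper's central simplification.

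\textbf{On (i).} You propose a non--quadratic, entropy--type source cost $Q(N,s)$ \`a la Liero--Mielke, on the grounds that the reaction rate is exponential in $\Phi$. The paper instead observes that the reaction term \emph{factors}: writing $\Omega_N := \bigl[N e^{-(\mu-w_N)/2} - e^{(\mu-w_N)/2}\bigr]/\Phi_N$, numerator and denominator share sign, so $\Omega_N>0$ and the right side of Eq.(\ref{GCD}) becomes $-\Omega_N\Phi_N$, \emph{linear} in $\Phi_N$. Hence the Riemannian tensor $g_N(M_1,M_2)=\int N\,\nabla Q_1\!\cdot\!\nabla Q_2 + \Omega_N\,Q_1 Q_2$ is genuinely quadratic, and $\mathbb D$ is the associated Benamou--Brenier distance. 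Your construction would produce a different metric (and hence not literally the $\mathbb D$ of the theorem); the paper's trick keeps everything Hilbert--space quadratic at the price that $\Omega_N$, and thus the metric, depends on the particular equation --- a limitation the authors acknowledge.

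\textbf{On (ii).} The paper does \emph{not} run JKO with the exact $\mathbb D$. Lacking any Monge/Kantorovich machinery for this non--conservative transport, it linearizes the continuity--with--source equation at the current density to obtain an \emph{approximate} distance $\mathbb D_A$ (a fixed weighted $H^1$ norm on driving potentials), runs the scheme with $\mathbb D_A$, identifies the minimizer with a classical implicit--Euler step, and proves $h\to 0$ convergence by explicit Fourier estimates. Your De~Giorgi route with the exact $\mathbb D$ would require lower semicontinuity and compactness properties of $\mathbb D$ that are never established.

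\textbf{On (iii).} The paper's argument is shorter and recovers the stated constant. Take the straight--line path $N_s^\bullet=(1-s)N_t+s\,\textsc{m}_0$ with driving potential $Q_s^\bullet$; then $\mathbb D^2(N_t,\textsc{m}_0)\le \int_0^1 \langle\hspace{-.1cm}\langle\nabla Q_s^\bullet,\nabla Q_s^\bullet\rangle\hspace{-.1cm}\rangle_{N_s^\bullet}\,ds$. Using $\Omega_N\ge N^{1/2}$ and the same Cauchy--Schwarz step as for $\mathbb D_A$ in the Main Theorem yields $\mathbb D^2(N_t,\textsc{m}_0)\le (\kappa\textsc{m}_0)^{-1/2}\|N_t-\textsc{m}_0\|_{L^2}^2$, and one applies the $L^2$ decay already proved. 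Your energy--dissipation/telescoping argument is sound in outline but produces an extra geometric--series factor $(1-e^{-\lambda^\dagger/2})^{-2}$, so you would not land on the $\sigma$ of the Main Theorem; it also tacitly needs either completeness of $(\mathcal B,\mathbb D)$ or an independent proof that $\mathbb D(N_{t+K},\textsc{m}_0)\to 0$ to close the limit in the telescoping sum.
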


It is (re)emphasized that the convergence rate is uniform in volume; hence this result may be regarded as a requite first step for the -- as of yet unformulated -- infinite volume study of these fluids. 

\section{Otto Distance \& JKO}
\label{JKO}

For many mass conserving parabolic \textsc{pde}'s -- e.g., in particular Eq.~\eqref{ABC} -- the geometric picture uncovered in \cite{O} (see also the book
\cite{AGS}) has provided indispensable theoretical insight as well as certain practical tools.  However, for mass non--conserved cases,
the generalization of these ideas and their corresponding connection to some version of optimal transportation has not been definitive.  Here, with the tangibles provided by Eq.~\eqref{GCD} along with the functional $\mathscr G_\mu(\cdot)$ from Eq.~\eqref{GCF} that this dynamic has a tendency to minimize,
we may parallel and -- to some extent -- extend, the developments of \cite{O}.  (We refer also to \cite{LM}.)

In this section we will lay out the Riemannian structure underlying our evolution equation by introducing an inner product on the space of measures and an associated distance.  Indeed, it is this underlying structure which motivates and clarifies the eventual exponential convergence to equilibrium.  Associated with a distance is a natural time discretization scheme, i.e., the JKO scheme, which we think of as an infinite dimensional analogue of an Euler scheme.  In \cite{JKO}, minimizers of this scheme are used to yield an approximate (weak) discretization to the underlying evolution; there, the relation to the classical mass conserved transportation problem was used as a conduit between this scheme and the original evolution equation.  

In our case, instead of recourse to an explicitly pre--formulated transportation problem, we shall content ourselves with a Benamou--Brenier (see e.g., \cite{BB}) description of the distance, i.e., it is realized as the infimum over a set of advective transportation possibilities.  Further, we shall consider an \emph{approximation} to the distance (over short times) wherein our analogue of the continuity equation shall be linearized at the initial density.  It is with this approximate distance that we shall  define our JKO--type scheme in the next section.  Our ideology, at least in this work, is therefore that the underlying abstract Riemannian structure should be used as a guide to what is ultimately a very concrete approach.  Thus we shall not provide too many rigorous foundations for our discussions in this section; the basic results establishing that we indeed have a reasonable distance can be found in Appendix B.

Our starting point is to consider a suitable collection 
$\mathcal B$ of Borel measures on
$\mathbb T_{L}^{d}$.  For the purposes of the current work, the setting which leads to the most expedient developments is to consider measures given by a density which is \emph{positive} and is also in $L^2$:
\begin{equation}\label{MDef}
\b{\mathcal B = \{\nu \mbox{ a Borel measure on $\mathbb T_L^d$} \mid \nu \in L^2  \mbox{ and } \nu > 0\}}.
\end{equation}

 What is to follow is motivated by writing
Eq.~\eqref {GCD} in advective form.  The transport velocity field, denoted by
$V$, clearly takes the form 
\footnote{In traditional fluid mechanics, see e.g., \cite{YIH},
it is the \textit{positive} gradient of the velocity potential which produces the velocity field.  We adhere to the convention used in \cite{O}
wherein it is the \textit{negative} gradient.}
$$
V  =  -\nabla \Phi_{N},
\hspace{5 pt}
\text{with}
\hspace{6 pt}
\Phi_{N} := \frac{\delta \mathscr G_{\mu}}{\delta N} = \log N -\mu + w_{N}.
$$
The right hand side of Eq.~\eqref{GCD} is obviously not identically zero.  But, it is noted, it has the same sign as $\Phi_{N}$.  Thus, we may rewrite 
Eq.~\eqref{GCD} in the form:
\begin{equation}
\label{XYZ}
\frac{\partial N}{\partial t} = \nabla\cdot (N\nabla \Phi_{N}) -\Omega_{N}\Phi_{N}.
\end{equation}
Here 
\begin{equation}
\label{XXZ}
\Omega_{N} := \frac{N\text{e}^{-\frac{1}{2}(\mu - w_N)}
- \text{e}^{\frac{1}{2}(\mu - w_N)}}{\log N - \mu + w_N}
\end{equation}
is seen to be positive and tending to a definitive limit (which incorporates into the definition) if both numerator and denominator vanish.  We regard 
Eq.~\eqref{XYZ} as the fundamental advective form for the inhomogeneous case.  In particular, we will say that $N$ is \textit{advected} by $Q$, if it satisfies Eq.~{\ref{XYZ} with $\Phi_{N}$ replaced by $Q$ and with $\Omega_{N}$ exactly as in Eq.~\eqref{XXZ}.

\b{We reiterate that our equation is of a form which is often referred to as one of a \emph{reaction diffusion} type.  It is perhaps worth contrasting our case with the case studied in \cite{LMS} (see the final display of Section 1 therein) and \cite{KMV}: Here, instead of a constant -- or a fixed function, as is studied in \cite{CPSV} -- as the weighting factor for the inhomogeneous term, we have the fully nonlinear term $\Omega_N/N$.}

For $N\in \mathcal B$ let us consider the tangent space, $\mathscr T_{N}$
at $N$.  This is understood as the behavior at time $t = 0$ of all trajectories in 
$\mathcal B$ passing through $N$ at $t = 0$ i.e., possible values of 
$\left. \frac{\partial N}{\partial t}\right |_{t = 0}$.  As in the mass conserved cases, 
these objects are in correspondence with potentials which advectively cause 
$\frac{\partial N}{\partial t}$ to take on this value:  Specifically, 
for $M \in \mathscr T_{N}$ we may define $Q = Q(M)$ to be the potential which satisfies the elliptic equation
\begin{equation}
\label{MQM}
M  =  \nabla\cdot(N\nabla Q) - \Omega_{N}Q.
\end{equation}

For $M_{1}, M_{2} \in \mathscr T_{N}$ it is thus natural to define
\begin{equation}
g_{N}(M_{1}, M_{2})  =  -\int _{\mathbb T_{L}^{d}}M_{1}Q_{2}~dx  
=  -\int_{\mathbb T_{L}^{d}} M_{2}Q_{1}~dx :=
\langle \hspace{-.1cm} \langle  \nabla Q_{1}, \nabla Q_{2} \rangle  \hspace{-.1cm}  \rangle_{N}.
\end{equation}
And so, explicitly, we  have
\begin{equation}
\label{VBPL}
\langle \hspace{-.1cm} \langle  \nabla Q_{1}, \nabla Q_{2} \rangle  \hspace{-.1cm}  \rangle_{N}
= \int_{\mathbb T_{L}^{d}} N(\nabla Q_{1}\cdot \nabla Q_{2}) + \Omega_{N}(Q_{1} \cdot Q_{2})~dx
\end{equation}
which is akin to a Sobolev inner product (for potentials) on $\mathbb T_{L}^{d}$.  It is manifest that $g_{N}(\cdot, \cdot)$
is positive definite and therefore defines a requisite inner product for 
elements of $\mathscr T_{N}$.  

Next, we will demonstrate that Eq.~\eqref{GCD} can  be envisioned as 
the gradient flow of $\mathscr G_{\mu}(\cdot)$ with respect to this metric.  
First, let us use this metric $g_{N}(\cdot, \cdot)$ to define a 
$\mathcal B$--gradient.  Consider a simple functional on $\mathcal B$ of the form
$$
\mathcal J(B)  =  \int_{\mathbb T_{L}^{d}} J(B,x) ~dx
$$
where, e.g., $\mathcal J$ is of class $C^1$.  The directional (G\^ateaux) derivative at $N$
in the direction $M$ is defined by
$$
d\mathcal J(N;M)  := \lim_{\varepsilon \to 0}
\frac{\mathcal J(N + \varepsilon M) - \mathcal J(N)}{\varepsilon}
$$
-- when it exists -- and is given explicitly by
$$
d\mathcal J(N;M) = \int_{\mathbb T_{L}^{d}}
\frac{\delta  J}{\delta N}(N)\cdot M~dx.
$$
Therefore, by analogy with the finite dimensional cases, we use the metric to define the gradient via
$$
d\mathcal J(N;M)  :=  g_{N}(\nabla _{\mathcal B}\mathcal J, M).
$$
In light of the explicit form of the directional derivative, we may identify 
$\nabla _{\mathcal B}\hspace{-.025 cm}\mathcal J$ with the associated advective potential
$\frac{\delta J}{\delta N}$.  

This nearly completes the program.  Consider a weakened version of Eq.~\eqref{XYZ} which in the current language reads
$$
-\int_{\mathbb T_{L}^{d}}
 Q\frac{\partial N}{\partial t} ~dx  =  
\langle \hspace{-.1cm} \langle  \nabla Q, \nabla \Phi_{N}  \rangle  \hspace{-.1cm}  \rangle_{N}
$$
for some test function $Q$.  As above, we denote by $M = M(Q)$ the solution of the advective equation Eq.~\eqref{MQM}.  We remind the reader that in the above display, 
$\Phi_{N} = \log N - \mu + w_{N} = \frac{\delta \mathscr G_{\mu}}{\delta N}$ and so this form of Eq.~\eqref{XYZ} can be written as
$$
g_{N}(M,\frac{\partial N}{\partial t})  =  - g_{N}(M, \nabla _\mathcal B\mathscr G_{\mu} ) ~~~(~= -  \int_{\mathbb T_L^d} M \frac{\delta \mathscr G_\mu}{\delta N}~).
$$
Or, informally, against the backdrop of the given $g_{N}(\cdot, \cdot)$, 
$$
\frac{\partial N}{\partial t}  =  - \nabla _\mathcal B\mathscr G_{\mu};
$$
this then fully justifies the terminology ``gradient flow''.

The above metric $g_{(\cdot)}(\cdot,\cdot)$ allows 
for a definition of distance between elements of $\mathcal B$.  \b{Foremost, for 
$V_{1}, V_{2} \in L^{2}$ which are \emph{vector} valued and $Q_1, Q_2 \in L^2$ which are scalar fields, we may define the inner product akin to that which we defined for gradient fields}
\begin{equation}
\label{SDOU}
\b{\langle \hspace{-.1cm} \langle  (V_{1}, Q_1), (V_{2}, Q_2)  \rangle  \hspace{-.1cm}  \rangle_{N}
:= 
\int N(V_{1}\cdot V_{2}) + \Omega_{N}Q_1Q_2~dx.}
\end{equation}
\b{We emphasize that in this definition there is no \emph{a priori} relationship between $V_1$ and $Q_1$, etc. However, notice that if $V_1 = \nabla Q_1, V_2 = Q_2$, the above notation coincides with our prior use of $\langle \hspace{-.1cm} \langle  \nabla Q_1, \nabla Q_2  \rangle  \hspace{-.1cm}  \rangle_{N}$; both notations will be used and the meaning shall always be clear from the context.}

In what follows (and in general in these contexts)
we will use a subscript of $t$ to denote time dependence -- not to be confused with a partial derivative.  
Then, for $N_{0}$, $N_{1}$ in $\mathcal B$ we may consider the set of \b{vector and scalar field pairs} which drive $N_{t}$ from $N_{0}$ at $t = 0$ to $N_{1}$ at $t = 1$ \b{according to the dynamics in the below display} in such a way that $\frac{\partial N_t}{\partial t}$ remains in $L^2(\mathbb T_L^d \times (0, 1))$:
\begin{equation}\label{BDF}\begin{split}
\mathscr V(N_{0}, N_{1})  := \{\b{V \in L^2(N_t), Q} &\in L^2(\Omega_{N_t}) \mid 
\frac{\partial N_{t}}{\partial t} + \nabla \cdot (N_{t}V) = \b{-\Omega_{N_{t}}Q}\\
&\text{ with } N_{t=0} = N_{0} \text{, } N_{t = 1} = N_{1} \mbox{ and } \frac{\partial N_t}{\partial t} \in L^2\}.
\end{split}\end{equation}

We claim that the set $\mathscr V(N_{0}, N_{1})$ is non--empty since we may consider the straight line path
$N_{t}  =  (1-t)N_{0} + tN_{1}$ and find a (time dependent) gradient field which drives $N$ along this path.  Indeed, here, $\frac{\partial N_t}{\partial t}$ is given by $N_1 - N_0$ which is in $L^2$.  Now given a curve in $\mathcal B$ indexed by $N_t$, we may consider the Hilbert space (for potentials) equipped with the inner product $(\phi, \psi)_{N_t}$ given by \b{$\int_0^1 \langle \hspace{-.1cm} \langle (\nabla \phi,\phi), (\nabla \psi, \psi) \rangle  \hspace{-.1cm}  \rangle_{N_{t}}~dt$}.  Since $N_t$ is bounded from below, it turns out that $\Omega_{N_t}$ is also bounded below (c.f., Eq.\eqref{CQGR}).  Thus, the $L^2$--norm of a potential $\phi$ is bounded above by a constant times the norm induced by the Hilbert space.  It then follows that (integration against) $N_1 - N_0$ can be viewed as a bounded linear functional on the Hilbert space and so the required driving gradient field is existentiated by the Riesz Representation Theorem.

We now define the distance $\mathbb D$ via
\begin{equation}
\label{GBVH}
\b{\mathbb D^{2}(N_{0}, N_{1})  =  \inf_{(V, Q) \in \mathscr V(N_{0}, N_{1})}
\int_{0}^{1}
\langle \hspace{-.1cm} \langle (V, Q),(V, Q) \rangle  \hspace{-.1cm}  \rangle_{N_{t}}~
dt,}
\end{equation}
or, equivalently, for $(V, Q)$'s in $\mathscr V_{T}(N_{0}, N_{1})$ which drive 
$N_{0}$ to $N_{1}$ on $[0,T]$,
$$
\b{\mathbb D^{2}(N_{0}, N_{1})  =  \inf_{(V, Q) \in \mathscr V_{T}(N_{0}, N_{1})}
T\int_{0}^{T}
\langle \hspace{-.1cm} \langle (V, Q),(V, Q) \rangle  \hspace{-.1cm}  \rangle_{N_{t}}~
dt.}
$$
\b{We remark that while the minimization problem is envisioned as minimizing over all paths $N_t: N_0 \leadsto N_1$, in fact the only paths which are conceivably of interest are those which can be achieved by some $(V, Q)$ as described.  Since all of this is already encoded in the definition of $\mathscr V(N_0, N_1)$, minimization of the functional over this set is appropriate and sufficient.}
It can be demonstrated that $\mathbb D^{2}(\cdot,\cdot)$ is indeed the square of a distance which separates points and that for all intents and purposes, any minimization program for $\mathbb D^{2}(\cdot,\cdot)$ may be carried out by considering only those fields which are derived from a velocity potential. 
These results have been collected in Appendix B.

\begin{remark}
Here we emphasize that the existence of a distance between points in $\mathcal B$ (and one may hope to presume all Borel measures on $\mathbb T_L^d$) automatically defines an (abstract) optimal transport problem in this context: Indeed, the explicit realization of the distance as an infimum implies a transport problem wherein the ``optimal path'' minimizes the relevant functional.  It is unfortunate that these problems have not been tied to an explicit Monge--Ampere or Kantorovich type formulation.
\end{remark}

Having introduced the preceding metric structure on $\mathcal B$ and demonstrated the gradient flow properties of Eq.~\eqref{XYZ} for the functional 
$\mathscr G_{\mu}(\cdot)$ with respect to this metric, we may then consider the following JKO--type scheme: 
\begin{equation}
\label{VFKIL}
N_{t + h}  =  
\text{Argmin}\{\frac{1}{2}\mathbb D^{2}(N_{t}, N)  + h\mathscr G_{\mu}(N)\}.
\end{equation}
This is a direct generalization of the scheme in \cite{JKO} to these inhomogeneous cases.

\section{The Approximate Functional}

In this section we will proceed to construct an approximate functional whose minimizers will explicitly yield a discretization of our equation.  It should be emphasized that JKO--type functionals, even when summed up over all iterations, do not admit a meaningful $h$ tends to zero functional to be minimized -- these are dissipative systems.  In this sense, all such functionals are finite $h$ ``approximates''.   \b{An alternative approach to discretization (which may have applicability to the system studied here) is to construct regularized functionals, e.g., the so--called WED functional.  Again in this case, while there is strictly speaking no limiting functional, the limit of the minimizers does correspond to a solution of the original system.  See  \cite{RSSS}, \cite{MO}, \cite{S} and references therein. }

Here for motivational purposes it is worthwhile to understand the difference between our situation and the mass conserved case as treated in \cite{JKO}.   In the latter, the \emph{exact} approximate functional (e.g., as displayed in Eq.~\eqref{VFKIL}) was employed.  It was found that the minimizers were an approximate discretization converging to the relevant dynamics.  To accomplish these ends, virtually all of the existing machinery of optimal transportation were deployed.  This includes, but is not limited to: A well formulated and well studied underlying transportation problem, the coupled measure description for the Wasserstein distance, the pushforward formalism, a relation between the Wasserstein distance and variance, and, finally, the connection with the Benamou--Brenier description via transport fields.    

The key difference here is that no such ancillary machinery has as of yet been developed for non--conservative problems.  Indeed, \emph{all} we have is the Benamou--Brenier formalism -- which here defines the distance itself.  Thus, instead of deploying the exact approximate functional, we shall use an \emph{approximate} approximate functional whose \emph{exact} minimizers provide a discretization.  The principle difficulty in our approach is that the discretization arrived at is not as viable as the discretization acquired in \cite{JKO} which (still only) approximated the minimizers.  Hence, here, to obtain the $h$ tends to zero limiting dynamics, an arduous, albeit elementary analysis is required.  However, these technicalities can be neatly quarantined and are the subject of Appendix A. 

\subsection{Definition and Minimization}
The starting point of our program entails a discretization of the distance itself (for small times).  
Let $h > 0$ which we envision to be small and consider times $0 \leq t \leq h$.  Let us replace the previously described distance functional by one where  $N_{t}$ is replaced in two crucial places by $N_{0}$.  
In particular, for all intents and purposes, under the auspices of $h \ll 1$ we are replacing
$N_{t}$ with $N_{0}$ in the inner product:
$\langle \hspace{-.1cm} \langle  \cdot  , \cdot   \rangle  \hspace{-.1cm}  \rangle_{N_{t}}
\to 
\langle \hspace{-.1cm} \langle   \cdot ,  \cdot  \rangle  \hspace{-.1cm}  \rangle_{N_{0}}$
and allowing this to inherit into the (approximate) dynamics.  Starting with the latter, for fixed $\phi$
we write
\begin{equation}
\label{UIC}
\frac{\partial N_{t}}{\partial t}  =  
\nabla \cdot (N_{0}\nabla \phi) - \Omega_{N_0}\phi.
\end{equation}
Then the approximate distance is defined as
$$
\b{\mathbb D_{A}^2(N_{0}, N_h)} := \inf_\phi
\int_{\mathbb T_{L}^{d}} h \int_{0}^{h}N_{0}|\nabla \phi|^{2} + \Omega_{{N_0}}\phi^{2}~dtdx 
$$
where under the above approximate dynamics, $\phi$ gets us to $N_h$ at time $t = h$. \b{(We reiterate that since $N_h$ is considered fixed, corresponding to each $\phi$ is an interpolating curve $N_t$ from $N_0$ to $N_h$.)}  With $\phi$ as argument (not necessarily minimizing anything) we will denote the right hand side by 
$\mathbb {E}_{A}(\cdot)$:
$$
\mathbb {E}_{A}(\phi) :=  
h \int_{0}^{h}
\langle \hspace{-.1cm} \langle \nabla \phi , \nabla \phi \rangle  \hspace{-.1cm}  \rangle_{N_{0}}dt =
\int_{\mathbb T_{L}^{d}} h \int_{0}^{h}N_{0}|\nabla \phi|^{2} + \Omega_{{N_0}}\phi^{2}~dtdx.
$$
Under reasonable conditions, we expect that for fixed $N_{0}$ there is a unique \textit{static} field which drives the system to $N_{h}$ at time $t = h$.  (See Eq.~\eqref{KHYU} in the statement of Proposition \ref{YUDQ} below.)  Since we will be utilizing Hilbert space structures, it is pertinent now to introduce notation for the relevant space of driving fields.

\begin{definition}
We let $\mathcal H_{N_0}$ denote the Hilbert space (of driving fields) with the weighted inner product 
\[\langle \hspace{-.1cm} \langle \nabla \phi , \nabla \psi \rangle  \hspace{-.1cm}  \rangle_{N_{0}} = \int_{\mathbb T_L^d} N_0 (\nabla \phi \cdot \nabla \psi) + \Omega_{N_0} \phi \psi~dx.\]
The dual space will be denoted by $\mathcal H_{N_0}^{-1}$.
\end{definition}

Our first observation is that the static field $\phi$ described above actually minimizes the approximate distance functional: 

\begin{proposition}
\label{YUDQ}
For fixed $N_h - N_0\in \mathcal H_{N_0}^{-1}$ and any driving field $\varphi$, let 
$\mathbb D_{A}(N_{0}, N_{h})$ and $\mathbb {E}_{A}(\varphi)$ be as described.  Then the minimum 
for $\mathbb D_{A}(N_{0}, N_{h})$
is achieved by the unique static $\phi \in \mathcal H_{N_0}$ which satisfies
\begin{equation}
\label{KHYU}
\frac{N_{h} - N_{0}}{h}  =  
\nabla \cdot (N_{0}\nabla \phi) - \Omega_{N_0}\phi.
\end{equation}
\end{proposition}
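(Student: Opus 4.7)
The plan is to decouple the claim into two parts: (a) existence and uniqueness of a \emph{static} $\phi^{\star}\in\mathcal{H}_{N_0}$ solving Eq.\eqref{KHYU}, and (b) a time-averaging argument showing that among all admissible (possibly time-dependent) drivers $\phi(x,t)$, the constant-in-$t$ choice $\phi^{\star}$ minimizes $\mathbb{E}_A$.

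For (a), I would invoke Lax--Milgram (equivalently, Riesz representation) on $\mathcal{H}_{N_0}$. The bilinear form $a(\phi,\psi):=\langle\hspace{-.1cm}\langle\nabla\phi,\nabla\psi\rangle\hspace{-.1cm}\rangle_{N_0}$ is, by construction, the defining inner product of $\mathcal{H}_{N_0}$, so continuity and coercivity are tautological. The right-hand side $\psi\mapsto \tfrac{1}{h}\int_{\mathbb{T}_L^d}(N_h-N_0)\,\psi\,dx$ is a bounded linear functional on $\mathcal{H}_{N_0}$ precisely by the hypothesis $N_h-N_0\in\mathcal{H}_{N_0}^{-1}$, producing a unique $\phi^{\star}$. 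Here the bounds on $N_0\in\mathcal{B}$ and on $\Omega_{N_0}$ (via Eq.\eqref{CQGR}) ensure that $\mathcal{H}_{N_0}$ is equivalent to a standard weighted Sobolev-type space, so the framework is legitimate and the elliptic operator $\phi\mapsto \nabla\cdot(N_0\nabla\phi)-\Omega_{N_0}\phi$ is an isomorphism $\mathcal{H}_{N_0}\to\mathcal{H}_{N_0}^{-1}$.

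For (b), I would start from an arbitrary admissible $\phi(x,t)$ and form the time-average $\bar\phi(x):=\tfrac{1}{h}\int_0^h\phi(x,t)\,dt$. Integrating Eq.\eqref{UIC} on $[0,h]$ using $N_{t=0}=N_0$, $N_{t=h}=N_h$ yields
$$
\frac{N_h-N_0}{h}=\nabla\cdot(N_0\nabla\bar\phi)-\Omega_{N_0}\bar\phi,
$$
so by the uniqueness in (a), $\bar\phi=\phi^{\star}$. Pointwise in $x$, Cauchy--Schwarz (equivalently Jensen's inequality for $u\mapsto u^2$) gives $\int_0^h|\nabla\phi|^2\,dt\ge h|\nabla\bar\phi|^2$ and $\int_0^h\phi^2\,dt\ge h\bar\phi^2$, with equality iff $\phi$ is independent of $t$. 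Multiplying by $N_0$ and $\Omega_{N_0}$ respectively, integrating in $x$, and substituting $\bar\phi=\phi^{\star}$ delivers
$$
\mathbb{E}_A(\phi)=h\int_0^h\langle\hspace{-.1cm}\langle\nabla\phi,\nabla\phi\rangle\hspace{-.1cm}\rangle_{N_0}\,dt\;\ge\;h^2\,\langle\hspace{-.1cm}\langle\nabla\phi^{\star},\nabla\phi^{\star}\rangle\hspace{-.1cm}\rangle_{N_0}=\mathbb{E}_A(\phi^{\star}),
$$
with equality iff $\phi(x,t)\equiv\phi^{\star}(x)$. This simultaneously identifies $\phi^{\star}$ as the minimizer and gives its uniqueness within $\mathcal{H}_{N_0}$.

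The main obstacle I anticipate is not in the minimization step itself --- which reduces to a one-line Jensen estimate --- but in nailing down the functional-analytic setup: verifying that $\mathcal{H}_{N_0}$ is a genuine Hilbert space in which the approximate dynamics Eq.\eqref{UIC} makes distributional sense, that $\Omega_{N_0}$ is uniformly bounded above and away from zero so that coercivity is nondegenerate, and that the duality pairing $\langle N_h-N_0,\psi\rangle$ interprets unambiguously against test fields $\psi\in\mathcal{H}_{N_0}$. Once those foundational points are in place, (a) is immediate from Lax--Milgram and (b) is the elementary time-averaging argument above.
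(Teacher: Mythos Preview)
Your proposal is correct and follows essentially the same route as the paper: Riesz/Lax--Milgram for existence--uniqueness of the static $\phi^\star$, followed by a quadratic minimization in $t$. The only cosmetic difference is in part (b): the paper writes a general driver as $\phi^\star+\alpha$, shows the cross term $\int_0^h\langle\hspace{-.1cm}\langle\nabla\phi^\star,\nabla\alpha\rangle\hspace{-.1cm}\rangle_{N_0}\,dt$ vanishes (equivalently $\bar\alpha=0$), and concludes via the Pythagorean identity $\mathbb{E}_A(\phi^\star+\alpha)=\mathbb{E}_A(\phi^\star)+\mathbb{E}_A(\alpha)$, whereas you phrase the same fact as Jensen's inequality applied to the time average --- these are equivalent formulations of the same Hilbert-space projection argument.
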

\begin{proof}
Since $N_h - N_0$ is a bounded linear functional on $\mathcal H_{N_0}$, the existence (and uniqueness) of the required $\phi$ again follows directly from the Riesz Representation Theorem. 

Let us adapt the temporary notation $N_{t}^{[\varphi]}$ for a density driven,
according to the approximate dynamics,
in the time interval 
$0 \leq t \leq h$ by the field $\varphi$.  A general driving field which achieves $N_{h}$ at $t = h$ may be written in the form $\phi + \alpha$ with $\alpha$ (necessarily) depending on time.  We have, weakly,
\begin{align}
\frac{\partial}{\partial t} N_{t}^{[\phi + \alpha]} &=  
\nabla\cdot [N_{0}\nabla (\phi + \alpha)]  - \Omega_0(\phi + \alpha)
\notag
\\
&= \frac{\partial}{\partial t} N_{t}^{[\phi]} + 
\nabla\cdot (N_{0}\nabla \alpha)  - \Omega_0\alpha.
\end{align}
It therefore follows that if $\psi$ is a suitable time independent test function then
$$
0 = \int_{0}^{h}\int_{\mathbb T_{L}^{d}}
 \psi(\nabla\cdot (N_{0}\nabla \alpha)  - \Omega_{0}\alpha)~dxdt
= - \int_{0}^{h}\int_{\mathbb T_{L}^{d}}
N_{0}(\nabla \psi \cdot \nabla \alpha) + \Omega_0\psi\alpha~dxdt.
$$
In particular, plugging in $\phi$, we have 
$$
\int_{0}^{h}\int_{\mathbb T_{L}^{d}}
N_{0}(\nabla \phi \cdot \nabla \alpha) + \Omega_0\phi\alpha~dxdt
= 0.
$$
Now we consider $\mathbb E_{A}(\phi + \alpha)$:
$$
\mathbb E_{A}(\phi + \alpha)  =  
h\int_{0}^{h}\int_{\mathbb T_{L}^{d}}
N_{0}(|\nabla \phi + \nabla \alpha|^{2}) + \Omega_0(\phi + \alpha)^{2}~dxdt
= \mathbb E_{A}(\phi) + \mathbb E_{A}(\alpha)
$$
where, by the preceding display, the cross term has vanished.  Since 
$\mathbb E_{A}(\alpha)$ is positive, the result is established.  
 \end{proof}
 
\begin{definition}\label{DKR}
Given a fixed $N_0$, let us now consider the JKO type functional associated with $\mathbb D_{A}$: $$
\mathbb J_{A}(N_{0},N) := \frac{1}{2}\mathbb D_{A}^2(N_{0}, N) + h\mathscr G_{\mu}(N).
$$
\end{definition}

\begin{remm} \b{Let us observe that if $N_0 \in \mathcal B$ then in fact $N_0 \in \mathcal H_{N_0} ^{-1}$: Indeed, we have
$$|\int_{\mathbb T_L^d} N_0 \phi~dx| \leq \|N_0\|_1 \|\sqrt{N_0}\phi\|_{2}\leq \|N_0\|_1 \|\phi\|_{\mathcal H_{N_0}}.$$}\end{remm}

We first show that the functional $\mathbb J_A(N_0, \cdot)$ can be minimized.

\begin{prop}
\label{UMM}
Let $N_0 \in \mathcal B$.  Then the functional 
$\mathbb J_A({N_0},\cdot)$ has a  minimizer in $\mathcal H_{N_0}^{-1}$.  Furthermore, this minimizer is in $L^1$.
\end{prop}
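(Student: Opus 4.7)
The plan is to apply the direct method of the calculus of variations, via the parametrization furnished by Proposition~\ref{YUDQ}. By that result, whenever $N - N_0 \in \mathcal H_{N_0}^{-1}$ one has $\mathbb D_A^2(N_0,N) = h^2\|\phi\|_{\mathcal H_{N_0}}^2$, where $\phi \in \mathcal H_{N_0}$ is the unique field solving $(N-N_0)/h = \nabla\cdot(N_0\nabla\phi) - \Omega_{N_0}\phi$. Writing $N(\phi) := N_0 + h[\nabla\cdot(N_0\nabla\phi) - \Omega_{N_0}\phi]$, it is thus equivalent to minimize
\[
F(\phi)\ :=\ \tfrac{h^{2}}{2}\,\|\phi\|_{\mathcal H_{N_0}}^2\ +\ h\,\mathscr G_{\mu}(N(\phi))
\]
over the convex subset of $\phi \in \mathcal H_{N_0}$ for which $N(\phi)$ is pointwise non-negative (with $\mathscr G_{\mu} := +\infty$ otherwise).

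First I would establish a uniform lower bound on $F$. H--stability yields $\tfrac{1}{2}\iint W(x-y)N(x)N(y)\,dx\,dy \geq 0$ on the non-negative cone, and the pointwise bound $t\log t - t - \mu t \geq -\mathrm{e}^{\mu}$ (minimum at $t = \mathrm{e}^{\mu}$) gives $\mathscr G_{\mu}(N) \geq -\mathrm{e}^{\mu}|\mathbb T_L^d|$ for any $N \geq 0$; thus $F$ is bounded below and a minimizing sequence $\{\phi_n\}$ exists. The first term of $F$ controls $\|\phi_n\|_{\mathcal H_{N_0}}$, and since $N_0$ and $\Omega_{N_0}$ are bounded below, $\mathcal H_{N_0}$ embeds continuously in $H^1(\mathbb T_L^d)$; after extraction $\phi_n \rightharpoonup \phi^{\star}$ weakly in $\mathcal H_{N_0}$ and strongly in $L^2$ (and, along a further subsequence, a.e.) by Rellich--Kondrachov. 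Writing $N_n := N(\phi_n)$, testing against smooth functions yields $N_n \to N(\phi^{\star})$ distributionally; the uniform entropy bound extracted from $F(\phi_n) \leq C$, together with de la Vall\'ee--Poussin and Dunford--Pettis, upgrades this to weak convergence in $L^1$.

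The heart of the argument is passing to the liminf in $F$. The Hilbert norm on $\mathcal H_{N_0}$ is weakly lower semicontinuous, handling the distance term. The entropy--chemical piece $\int\bigl(N\log N - (1+\mu)N\bigr)\,dx$ is a standard convex integral functional, hence weakly lower semicontinuous on $L^1$. For the interaction, the finite range and continuity of $W$ make convolution an operator from $L^1$ into $C(\mathbb T_L^d)$: uniform continuity of $W$ and the $L^1$-bound on $N_n$ furnish equicontinuity of $\{W*N_n\}$, while weak $L^1$ convergence gives pointwise convergence, so Arzel\`a--Ascoli delivers $W*N_n \to W*N(\phi^{\star})$ uniformly. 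The standard splitting
\[
\int(W*N_n)N_n\,dx - \int(W*N^{\star})N^{\star}\,dx\ =\ \int\bigl(W*N_n - W*N^{\star}\bigr)N_n\,dx\ +\ \int(W*N^{\star})(N_n - N^{\star})\,dx
\]
then shows the interaction is continuous along the sequence. Consequently $F(\phi^{\star}) \leq \liminf_n F(\phi_n)$, so $\phi^{\star}$ is a minimizer; the corresponding $N^{\star} := N(\phi^{\star})$ is non-negative (as a weak $L^1$ limit of non-negative functions) and lies in $\mathcal H_{N_0}^{-1}$ by construction.

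Finally, the $L^1$ claim is immediate: since $\mathscr G_{\mu}(N^{\star}) < \infty$, while both the H--stable interaction and the integrand $t\log t - t - \mu t$ are bounded below, we conclude $\int N^{\star} \log N^{\star}\,dx < \infty$; standard bounds such as $t\log t \geq t$ for $t \geq \mathrm{e}$ then force $N^{\star} \in L^1$. The main obstacle I anticipate lies in the \emph{continuity} (rather than only lower semicontinuity) of the interaction term along weakly $L^1$-convergent sequences: H--stability alone gives non-negativity on the cone but not the convexity on signed functions that would yield weak $L^1$ lower semicontinuity for free; the argument must therefore lean on the regularity hypotheses on $W$ to promote convolution to uniform convergence, and this is the step that should be checked with care.
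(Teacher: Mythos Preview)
Your argument is correct and follows the same overall strategy as the paper: reparametrize by the driving field $\phi$, use the direct method, extract a weak limit $\phi^\star$ in $\mathcal H_{N_0}$, identify the corresponding $N^\star$ as a weak $L^1$ limit via the uniform entropy bound, and pass to the liminf term by term. The only substantive divergence is in the treatment of the interaction term. The paper works in Fourier space: it shows $\hat N^{(j)}(k)\to\hat N^\star(k)$ for each $k$, derives the crude bound $|\hat N^\star(k)-\hat N^{(j)}(k)|\le G|k|$ from the elliptic relation, and then invokes the hypothesis $\|W\|_{\mathcal D_2}=\sum_k k^2|\hat W(k)|<\infty$ to control the tail of $\sum_k \hat W(k)|\hat N^\star(k)-\hat N^{(j)}(k)|^2$. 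You instead use the finite range and continuity of $W$ (both standing assumptions here; continuity follows from $\|W\|_{\mathcal D_2}<\infty$) to push $W*N_n\to W*N^\star$ uniformly via Arzel\`a--Ascoli, and then split the bilinear form. Your route is arguably more elementary and does not rely on the specific Fourier decay hypothesis, only on $W\in C_c$; the paper's route, on the other hand, makes no use of finite range and would work for any $W$ with $\|W\|_{\mathcal D_2}<\infty$. Either way the interaction term is shown to be \emph{continuous} along the sequence rather than merely lower semicontinuous, which, as you correctly flag, is the delicate point since H--stability gives positivity only on the nonnegative cone.
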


\begin{proof}
For any $N_0$, we easily have that $\mathbb J_A(N_{0},\cdot)$ is bounded below.  Explicitly, the function $N \log N - (1+\mu)N$ is minimized at $N = \text{e}^\mu$ with value \b{$-\text{e}^{\mu}$}
whereas the term involving $W$ is positive by H--stability so (since we are in finite volume) the full free energy integral is bounded below.  The distance term is of course positive.

Let us then take some minimizing sequence $N^{(j)}$ in $\mathcal H_{N_0}^{-1}$.  By the observation in Definition \ref{DKR}, since $N_0 \in \mathcal B$, it is the case that $N_0 \in \mathcal H_{N_0}^{-1}$ and so $N^{(j)} - N_0 \in \mathcal H_{N_0}^{-1}$.  We now consider the driving fields $\phi^{(j)}$ corresponding to $N^{(j)}$ as given in Proposition \ref{YUDQ} so that 
\begin{equation}\label{EEBF} N^{(j)} - N_0 = h \left[\nabla \cdot (N_0 \nabla \phi^{(j)}) - \Omega_{N_0} \phi^{(j)}\right].\end{equation}
Now 
\[\mathbb D_{A}^2(N_0, N^{(j)}) = h \int_0^h \int_{\mathbb T_L^d} N_0 |\nabla \phi^{(j)}|^2 + \Omega_{N_0} (\phi^{(j)})^2~dxdt \]
must be bounded since the free energy is bounded below 
and, further, the right hand side is just $h^{2}$ times 
$\langle \hspace{-.1cm} \langle \nabla \phi^{(j)} , \nabla \phi^{(j)} \rangle  \hspace{-.1cm}  \rangle_{N_{0}}$.
We may therefore assert that along some further subsequence, if necessary, $\phi^{(j)}$
converges weakly
with respect to the inner product structure to some $\phi^{*} \in \mathcal H_{N_0}$.  Let us next define $N^{*}$ as the density corresponding to this $\phi^{*}$: We let $N^* \in \mathcal H_{N_0}^{-1}$ be such that for all $\psi \in \mathcal H_{N_0}$,
$$
N^*[\psi] = \int_{\mathbb T_L^d} N_0 \psi ~dx- h\int_{\mathbb T_L^d} N_0 (\nabla \phi^* \cdot \nabla \psi) + \Omega_{N_0} \phi^*\psi~dx.
$$

On the basis of the weak convergence of the $\phi^{(j)}$'s we claim that the $N^{(j)}$'s have a weak limit (in $\mathcal H_{N_0}^{-1}$) and that $N^{*}$ is this limit.  
Indeed, letting $\psi$ denote some suitable test function, we have
\begin{align}
\lim_{j\to\infty}\int_{\mathbb T_{L}^{d}}N^{(j)} \psi~ dx  &=  \int_{\mathbb T_L^d} N_0 \psi~dx - h\lim_{j\to\infty}\int_{\mathbb T_{L}^{d}} N_{0}(\nabla\phi^{(j)} \cdot \nabla\psi) + \Omega_{N_{0}}\phi^{(j)}\psi~dx
\notag
\\
&= \int_{\mathbb T_L^d} N_0 \psi ~dx- h\int_{\mathbb T_L^d} N_0 (\nabla \phi^* \cdot \nabla \psi) + \Omega_{N_0} \phi^*\psi~dx\\
&= N^*[\psi].
\end{align}
(We remark that the above realization of $N^*$ as a weak limit also implies that it is nonnegative.)

On the other hand, we claim that $N^*$ is in fact (at least) an $L^1$--function: It is the case that $N^{(j)} \log N^{(j)}$ is integrable and its integral is
uniformly bounded and so it follows (by Jensen's inequality) that $\|N^{(j)}\|_{L^1}$ is uniformly bounded.  Thus we assert that the associated measures converge vaguely and that the limit can be represented by an $L^1$--function which can then be identified with $N^*$ (see for example the exposition in \cite{DST}). 

We now claim that 
\[ \liminf_{j \rightarrow \infty} \mathbb J_A(N_0, N^{(j)}) \geq \mathbb J_A(N_0, N^*).\]
The lower semicontinuity of the terms involving \b{$N \log N - (1+ \mu) N$} and the $\mathbb D_A^2(N_0, N)$ term follow directly from convexity (indeed, $\mathbb D_A^2(N_0, N^{(j)})$ is explicitly convex in the variables $\phi^{(j)}$).  

Now we address the interaction term.  First note that for any function $M$, we have
\[ \int_{\mathbb T_L^d \times \mathbb T_L^d} W(x - y) M(x) M(y) ~dxdy = \frac{1}{L^d}\sum_k \hat{W}(k) |\hat{M}(k)|^2.\]
By the convergence of the $N^{(j)}$'s to $N^*$, it is clear that for any fixed $k$, we have
\[ \hat{N}^{(j)}(k) \rightarrow \hat{N}^*(k).\]
Let us obtain an \emph{a priori} estimate for $\hat{N}^{(j)}(k)$: Explicitly, we have that  
\[\begin{split}
(\hat{N}^* - \hat{N}^{(j)})(k) &= - h \int_{\mathbb T_L^d} e^{ikx} \left[ik \cdot N_0 ((\nabla \phi^* - \nabla \phi^{(j)}) +\Omega_{N_0} (\phi^* - \phi^{(j)}) \right]~dx.
 \end{split}\]
Taking absolute values and using Cauchy--Schwarz, we see that 
\[ |(\hat N^* - \hat N^{(j)})(k)| \leq G|k|\]
for some $G < \infty$ (for $k$ sufficiently large).

Now we apply the formula for the convolution displayed above to the quantity $\int_{\mathbb T_L^d} (W* (N^* - N^{(j)}))(N^* - N^{(j)})~dx$ to show that it tends to zero: We obtain (dropping the factor of $\frac{1}{L^d}$)
\[ \sum_k \hat{W}(k) |(\hat N^* - \hat N^{(j)})(k)|^2 = \sum_{|k|< k_0} \hat{W}(k) |(\hat N^* - \hat N^{(j)})(k)|^2+ \sum_{|k| \geq k_0} \hat{W}(k) |(\hat N^* - \hat N^{(j)})(k)|^2\]
for some fixed $k_0 \gg 1$.  As $j$ tends to infinity, the first term tends to zero.  For the second term, using the estimate derived above, we are left with 
\[ \sum_{|k| \geq k_0} \hat{W}(k) |(\hat N^* - \hat N^{(j)})(k)|^2 \leq G^2\sum_{k\geq k_0} k^2|\hat W(k)|. \]
Since $\|W\|_{\mathcal D_2} < \infty$, the right hand side is the tail of a convergent sum and can be made arbitrarily small.  We 
conclude that $\lim_{j \rightarrow \infty} \int_{\mathbb T_L^d} (W*N^{(j)})N^{(j)}~dx = \int_{\mathbb T_L^d} (W*N^*)N^*~dx$.  

It follows that  
\[ \inf\{\mathbb J_A(N_0, N), N \in \mathcal H_{N_0}^{-1}\} = \lim_{j \rightarrow \infty} \mathbb J_A(N_0, N^{(j)}) \geq \mathbb J_A(N_0, N^*) \]
and so indeed $N^*$ is the minimizing element of $\mathcal H_{N_0}^{-1}$.  
\end{proof}

We will hereafter refer to the minimizer found in the above as $N_h$; while we cannot yet claim that $N_h$ is uniformly bounded below, we do have: 

\begin{prop}\label{MIN}
Let $N_h \in \mathcal H_{N_0}^{-1} \cap L^1$ denote the minimizer of $\mathbb J_A(N_0, \cdot)$ as given in Proposition \ref{UMM}.  Then $N_h$ is positive almost everywhere.
\end{prop}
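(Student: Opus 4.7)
The plan is to argue by contradiction: assume the set $S := \{x : N_h(x) = 0\}$ has positive Lebesgue measure, and perturb $N_h$ by a small nonnegative constant, $N_\varepsilon := N_h + \varepsilon$ for $\varepsilon > 0$. The key mechanism is that the entropy density $t\log t$ has slope $-\infty$ at $t=0$, so adding mass on $S$ lowers the free energy by an amount of order $\varepsilon\log\varepsilon$, which is strictly more negative than any competing $O(\varepsilon)$ contribution for $\varepsilon$ sufficiently small. I would verify that $N_\varepsilon$ again lies in $\mathcal{H}_{N_0}^{-1}\cap L^1$ so that $\mathbb{J}_A(N_0, N_\varepsilon)$ is well defined.

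Next I would decompose $\mathbb{J}_A(N_0, N_\varepsilon) - \mathbb{J}_A(N_0, N_h)$. On $S$, the entropy piece evaluates cleanly to $|S|\,\varepsilon\log\varepsilon + O(\varepsilon)$, the dominant negative term. On $S^c$, I would apply the convexity bound $f(b) - f(a) \le f'(b)(b-a)$ to $f(t)=t\log t$, yielding
\[
(N_h + \varepsilon)\log(N_h + \varepsilon) - N_h \log N_h \;\le\; \varepsilon\bigl(\log(N_h + \varepsilon) + 1\bigr).
\]
For $\varepsilon \le 1$, $\log(N_h + \varepsilon) \le \log(1 + N_h) \le N_h$, so the $S^c$ contribution is bounded by $\varepsilon(\|N_h\|_{L^1} + |S^c|) = O(\varepsilon)$, using $N_h \in L^1$ from Proposition \ref{UMM}. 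The interaction piece of $\mathscr{G}_\mu$ and the $-(1+\mu)\varepsilon|\mathbb T_L^d|$ contribution are manifestly $O(\varepsilon)$.

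For the distance term, I would invoke Proposition \ref{YUDQ}: if $\phi$ and $\phi_\varepsilon$ denote the static potentials driving $N_0$ to $N_h$ and to $N_\varepsilon$ respectively, then $\psi := (\phi_\varepsilon - \phi)/\varepsilon$ is the unique element of $\mathcal{H}_{N_0}$ solving $1/h = \nabla\cdot(N_0\nabla\psi) - \Omega_{N_0}\psi$, which exists by the Riesz representation argument (the constant $1$ is in $\mathcal{H}_{N_0}^{-1}$ since $\Omega_{N_0}$ is bounded below). Expanding
\[
\mathbb{D}_A^2(N_0, N_\varepsilon) = h^2 \langle \hspace{-.1cm} \langle \nabla\phi + \varepsilon\nabla\psi, \nabla\phi + \varepsilon\nabla\psi \rangle  \hspace{-.1cm}  \rangle_{N_0} = \mathbb{D}_A^2(N_0, N_h) + 2\varepsilon h^2 \langle \hspace{-.1cm} \langle \nabla\phi, \nabla\psi \rangle  \hspace{-.1cm}  \rangle_{N_0} + O(\varepsilon^2)
\]
shows the distance contribution is also $O(\varepsilon)$. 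Summing all pieces gives $\mathbb{J}_A(N_0,N_\varepsilon) - \mathbb{J}_A(N_0,N_h) \le h|S|\,\varepsilon\log\varepsilon + C'\varepsilon$, which is strictly negative for $\varepsilon$ small enough, contradicting minimality.

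The main obstacle is controlling the entropy on the set $\{0 < N_h < 1\}$, where $\log N_h$ may be arbitrarily negative and $\log N_h$ has no a priori integrability. The convexity inequality evaluated at the \emph{perturbed} argument $N_h + \varepsilon$ (rather than a Taylor expansion about $N_h$) is the key device: it replaces $\log N_h$ with $\log(N_h+\varepsilon)$, whose $L^1$ upper bound follows from the elementary $\log(1+x)\le x$ together with $N_h \in L^1$, irrespective of how small $N_h$ gets.
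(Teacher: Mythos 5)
Your proof is correct and follows essentially the same contradiction strategy as the paper: exploit the fact that the entropy integrand $t\log t$ has slope $-\infty$ at $t=0$, so that adding a small amount of mass on the zero set lowers the JKO functional at rate $\varepsilon\log\varepsilon$, which dominates every competing $O(\varepsilon)$ contribution from the distance and interaction terms.

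The one genuine difference is the shape of the perturbation. The paper perturbs by $\varepsilon n$ with $n$ a nonnegative bump supported on the zero set $\mathcal S_0$; this makes the entropy increment exactly $\int_{\mathcal S_0}\varepsilon n\log(\varepsilon n)$ and there is literally nothing to control outside $\mathcal S_0$. You instead add the global constant $\varepsilon$, which forces you to also bound the entropy increment on $S^c$ --- correctly identified as the main obstacle --- and you dispatch it with the convexity inequality $f(b)-f(a)\le f'(b)(b-a)$ evaluated at the perturbed argument together with $\log(1+N_h)\le N_h$ and $N_h\in L^1$. This costs an extra paragraph relative to the paper's version, but it buys some cleanliness: the constant $1$ is manifestly in $\mathcal H_{N_0}^{-1}$, and by linearity the driving-field correction $\psi$ is $\varepsilon$-independent, so the $O(\varepsilon)$ growth of $\mathbb D_A^2$ follows by a bare bilinear expansion rather than a norm inequality. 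Your treatment of the distance and interaction terms otherwise mirrors the paper's. Both variants are sound.
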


\begin{proof}
Let \b{$N \in \mathcal H_{N_0}^{-1} \cap L^1$} denote any nonnegative function for which $\mathbb J_A(N_0, N)$ is finite and let 
\[ \mathcal S_0 = \{x: N(x) = 0\}.\]
\b{Note that $\mathcal S_0$ is measurable since it is the complement of $\mbox{supp}(N)$.}
If it were the case that $\mathcal S_0$ has positive (Lebesgue) measure, then, we claim, it is possible to modify $N$ so as to lower $\mathbb J_A(N_0, \cdot)$.  \b{Indeed, let $n$ be the indicator function of $\mathcal S_0$ so that $\int_{\mathcal S_0} n(x)~dx = :n_0 > 0$ is the size of $\mathcal S_0$}.  Now consider the modification $N \mapsto  N + \e n$ for some (small) $\e > 0$.  The key observation is that the effect of this modification on all terms contributing to $\mathbb J_A(N_0, \cdot)$ \emph{except} the entropy term (i.e., the $N \log N$ term) is of order $\e$.  

\b{We first observe that certainly $n \in \mathcal H_{N_0}^{-1} \cap L^1$ and so by Proposition \ref{YUDQ}, there is some $\psi$ so that 
$$n = \nabla \cdot (N_0 \nabla \psi) - \Omega_{N_0} \psi.$$
It therefore follows that $\varphi + \e \psi$ will drive $N_0$ to $N + \e n$.}  For the distance squared term, note that e.g., $\mathbb D_A^2(N_0, N + \e n) \leq  h^2 (\|\phi\|_{\mathcal H_{N_0}}  + \e \|\psi\|_{\mathcal H_{N_0}})^2$.  The interaction term also has a linear (and quadratic) $\e$ modification with bounded coefficients.   Meanwhile, 
\[ \int_{\mathbb T_L^d} (N + \e n) \log (N + \e n) - N \log N ~dx = \int_{\mathcal S_0} n\e \log \e n~dx = n_0\e \log \e \]
which is negative and of considerably larger magnitude as $\e$ tends to zero.

Thus, since $N_h$ is a minimizer, the stated result follows. 
\end{proof}

\subsection{Discretization}

We are now ready to show that successively running our JKO type scheme yields a discretization of our equation.  

\begin{proposition}\label{DSIC}
Let \b{$N_h \in \mathcal H_{N_0}^{-1} \cap L^1$} denote the minimizer of $\mathbb J_{A}(N_0, \cdot)$ as given in Proposition \ref{UMM}. Then $N_0, N_h$ yield a weak discretization of the dynamics in Eq.~\eqref{GCD}. I.e., for all $\psi \in \mathcal H_{N_0}$, 
\begin{equation}
\label{XTCL}
\int_{\mathbb T_L^d} \frac{N_{h} - N_{0}}{h}~\psi  =  - \int_{\mathbb T_L^d} N_{0} (\nabla \Phi_{N_{h}} \cdot \nabla \psi) + \Omega_{N_{0}}\Phi_{N_{h}}\psi,
\end{equation}
i.e., weakly,
\begin{equation}
\label{CVDA}
\b{\frac{N_{h} - N_{0}}{h}} =
\nabla\cdot(N_{0}\nabla\Phi_{N_{h}}) - \Omega_{N_{0}}\Phi_{N_{h}}.
\end{equation}
Further, $\Phi_{N_h} \in \mathcal H_{N_0}$.
\end{proposition}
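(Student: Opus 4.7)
The plan is a standard first--variation argument: since $N_h$ minimizes $\mathbb J_A(N_0, \cdot)$, the G\^ateaux derivative in any admissible direction must vanish, and the claimed weak PDE is exactly this Euler--Lagrange condition read off at $N_h$. By Proposition \ref{YUDQ} I attach to $N_h$ its unique static driving field $\phi \in \mathcal H_{N_0}$, so that $(N_h - N_0)/h = \nabla\cdot(N_0\nabla\phi) - \Omega_{N_0}\phi$ weakly. The whole task then reduces to showing $\phi = \Phi_{N_h}$; this simultaneously delivers Eqs.~(\ref{XTCL})/(\ref{CVDA}) and, because $\phi \in \mathcal H_{N_0}$ by construction, upgrades $\Phi_{N_h}$ to an element of $\mathcal H_{N_0}$ as a bonus.

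For a test potential $\psi \in \mathcal H_{N_0}$ I perturb the driving field via $\phi \mapsto \phi + \varepsilon\psi$, which by Proposition \ref{YUDQ} corresponds to perturbing $N_h \mapsto N_h + \varepsilon\eta_\psi$ with $\eta_\psi := h\big[\nabla\cdot(N_0\nabla\psi) - \Omega_{N_0}\psi\big] \in \mathcal H_{N_0}^{-1}$. The distance contribution to the first variation is $h^2 \langle\langle \nabla\phi, \nabla\psi\rangle\rangle_{N_0}$, which by the weak form of the equation defining $\eta_\psi$ equals $-h\int_{\mathbb T_L^d}\phi\,\eta_\psi\,dx$. A direct computation of $\delta\mathscr G_\mu/\delta N$ supplies the free--energy contribution $h\int_{\mathbb T_L^d}\Phi_{N_h}\,\eta_\psi\,dx$. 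Adding these and dividing by $h$ yields the optimality identity
$$
\int_{\mathbb T_L^d}(\Phi_{N_h} - \phi)\,\eta_\psi\,dx = 0
$$
for every admissible $\psi$. Since the map $\psi \mapsto \eta_\psi$ is a bijection between $\mathcal H_{N_0}$ and $\mathcal H_{N_0}^{-1}$, sweeping over a dense class of $\psi$'s will force $\phi = \Phi_{N_h}$; substituting back into the advective equation from Proposition \ref{YUDQ} then delivers the claimed weak discretization.

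The main obstacle is making rigorous the first variation of the entropy term $\int N\log N\,dx$ at $N_h$. Proposition \ref{MIN} only provides $N_h > 0$ almost everywhere and not a uniform lower bound, so a priori $\log N_h$ need not lie in $L^2$ and $\Phi_{N_h}$ need not lie in $\mathcal H_{N_0}$. To handle this I would first restrict $\psi$ to be smooth and compactly supported inside the set $\{N_h > \delta\}$, where $N_h + \varepsilon\eta_\psi$ stays strictly positive for small $\varepsilon$ and the one--sided derivative $\frac{d}{d\varepsilon}\int (N_h + \varepsilon \eta_\psi)\log(N_h + \varepsilon\eta_\psi)\,dx$ is unambiguous and equal to $\int \log(N_h)\,\eta_\psi\,dx$; derive the Euler--Lagrange identity for this localized class; and then let $\delta \downarrow 0$, using density of the corresponding $\eta_\psi$'s in $\mathcal H_{N_0}^{-1}$ together with the already--identified $\phi \in \mathcal H_{N_0}$ on the other side of the identity. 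The promotion $\Phi_{N_h} \in \mathcal H_{N_0}$ then emerges precisely from the pointwise identification $\Phi_{N_h} = \phi$ combined with the fact that $\phi$ is in this Hilbert space by Riesz representation.
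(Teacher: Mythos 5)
Your overall strategy is the same as the paper's: attach to $N_h$ the static driving field $\phi\in\mathcal H_{N_0}$ from Proposition \ref{YUDQ}, perturb, compute both first variations, and deduce $\phi=\Phi_{N_h}$. The bilinear/first--variation computations (the identity $h^2\langle\hspace{-.1cm}\langle\nabla\phi,\nabla\psi\rangle\hspace{-.1cm}\rangle_{N_0}=-h\int\phi\,\eta_\psi$ via the weak form and symmetry of the inner product, and the free--energy derivative) are right and match the paper. You also correctly flag the real technical obstruction: since Proposition \ref{MIN} only gives $N_h>0$ a.e., $\Phi_{N_h}$ is not a priori in $\mathcal H_{N_0}$, so the pairing against an arbitrary $\eta_\psi\in\mathcal H_{N_0}^{-1}$ is not defined.

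However, the fix you propose has a gap. You localize \emph{on the potential side}: take $\psi\in C^\infty_c(\{N_h>\delta\})$. Then $\eta_\psi$ is indeed bounded and supported in that set, and the pairing makes sense. But the Euler--Lagrange relation $\int(\Phi_{N_h}-\phi)\,\eta_\psi\,dx=0$ then only holds for $\eta_\psi$ in the image of $C^\infty_c(U_\delta)$ under the elliptic operator $L=h\bigl[\nabla\cdot(N_0\nabla\cdot)-\Omega_{N_0}\cdot\bigr]$, and this image is \emph{not} dense in $L^2(U_\delta)$: its $L^2(U_\delta)$--orthogonal complement consists of interior distributional solutions of $Lg=0$ on $U_\delta$ (with no imposed boundary condition), of which there are many. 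So the identity does \emph{not} pin down $\Phi_{N_h}=\phi$ a.e.\ on $U_\delta$; at best it says $L(\Phi_{N_h}-\phi)=0$ there, and turning that into $\Phi_{N_h}-\phi=0$ requires knowing $\Phi_{N_h}-\phi\in\mathcal H_{N_0}$ globally, which is exactly what one is trying to prove. The $\delta\downarrow0$ step is also shaky: density of $\bigcup_\delta C^\infty_c(\{N_h>\delta\})$ in $\mathcal H_{N_0}$ would require the zero set of $N_h$ to have zero $H^1$--capacity, which measure zero alone does not guarantee. The paper avoids all of this by localizing \emph{on the density side}: it takes an arbitrary bounded $\eta$ supported on $\{N_h>\kappa\}$ as the free variation, and uses Proposition \ref{YUDQ} to produce the corresponding (globally supported, unconstrained) $\psi\in\mathcal H_{N_0}$ only as a computational device. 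Then $\eta$ sweeps over all bounded functions supported on $\{N_h>\kappa\}$, which immediately forces $\Phi_{N_h}=\phi$ a.e.\ on that set, and $\kappa\downarrow0$ finishes. Swap your choice of which object you localize and your proof goes through.
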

\begin{proof}
Let us denote by $\phi \in \mathcal H_{N_0}$ the corresponding (static) field which drives the system from $N_{0}$ to $N_{h}$ in the time interval $0 \leq t \leq h$ under the dynamics in Eq.~\eqref{UIC}, as given by Proposition \ref{UMM} (the $\phi$ here corresponds to the $\phi^*$ in the proof of Proposition \ref{UMM}).  Temporarily, letting $\kappa > 0$, we consider the variation
\b{$N_{h} \mapsto N_{h} + \varepsilon \eta$} with
(bounded)
$\eta \in \mathcal H_{N_0}^{-1}$ that is supported on the set $\{N_h(x) > \kappa\}$.

Now there is a corresponding variation in the driving field which we denote by $\varepsilon \psi$, so that  $\phi \mapsto \phi + \varepsilon\psi$ ``drives'' $N_0$ to $N_h + \e \eta$.  Since the relevant equations are linear, $\psi$ and $\eta$ are simply related via
\begin{equation}\label{ETTA}\eta  =  \nabla \cdot (N_{0}\nabla \psi) - \Omega_{N_0}\psi
\end{equation}
and so given $\eta$, the required $\psi \in \mathcal H_{N_0}$ is given by Proposition \ref{YUDQ}.

Now to lowest order in $\varepsilon$,
\begin{equation}
\label{ASB}
\mathscr G_{\mu}(N_{h}) \to 
\mathscr G_{\mu}(N_{h}) + \varepsilon\int_{\mathbb T_{L}^{d}} \eta\frac{\delta \mathscr G_{\mu}}{\delta N}~dx
\noindent = \mathscr G_{\mu}(N_{h}) + \varepsilon\int_{\mathbb T_{L}^{d}} \eta\Phi_{N_{h}}~dx.
\end{equation}
It is readily verified that all higher order terms divided by $\e$ tend to zero as $\e$ tends to zero (all coefficients are explicitly bounded since $\eta$ is supported only where $N_h > \kappa$).

Let us turn attention to the distance--type term.  Here we have, exactly,
\[ \begin{split}
&~~~~~\mathbb D_{A}^{2}(N_0, N_h + \varepsilon \eta) - 
\mathbb D_{A}^{2}(N_0, N_h) \\
&= - \int_{\mathbb T_L^d} (N_h + \e \eta - N_0) (\phi + \e \psi) ~dx
- \int_{\mathbb T_{L}^{d}} (N_h - N_0) \phi ~dx\\
&= -\int_{\mathbb T_L^d} \e\eta \phi + \e(N_h - N_0) \psi ~dx - \e^2 \int_{\mathbb T_L^d} \eta  \psi ~dx;
\end{split}
\]
it is clear that the $\e^2$ term can be neglected.
We now claim that the $(N_h - N_0) \psi$--term reproduces the $\eta \phi$--term: Indeed we have, from  Eq.~\eqref{KHYU}, that  
\begin{equation}\label{DDDS}
\int_{\mathbb T_L^d} (N_h - N_0) \psi~ dx = h\int_{\mathbb T_L^d} \left(\nabla \cdot (N_0 \nabla \phi) - \Omega_{N_0} \phi\right) \psi ~dx = -h\langle \hspace{-.1cm} \langle \nabla \phi, \nabla \psi \rangle  \hspace{-.1cm}\rangle_{N_0}.
\end{equation}
Since the inner product is symmetric, after a formal integration by parts, the role of $\phi$ and $\psi$ can be exchanged and we use the weak form of the elliptic equation defining $\psi$ (as in Eq.\eqref{ETTA}) to replace the expression involving $\psi$ with $\eta$.  

In combination with Eq.\eqref{ASB} we now see that the stationarity condition for the minimizer of $\mathbb J_A(N_0, \cdot)$ yields 
\[ \int_{\mathbb T_L^d} \eta (\phi - \Phi_{N_h})~dx = 0.\]
This implies that $\Phi_{N_h} = \phi$ on the set $\{N_h > \kappa\}$.  By Proposition \ref{MIN}, \b{the sets $\{N_h > \kappa_n\}$ for $\kappa_n \rightarrow 0$ are exhaustive and so $\kappa > 0$ can be made arbitrarily small and we see that $\Phi_{N_h} = \phi$ a.e.}  Since $\phi \in \mathcal H_{N_0}$ we also conclude that $\Phi_{N_h} \in \mathcal H_{N_0}$.  

Now to reproduce some discretization of the dynamics, we replace $\phi$ by $\Phi_{N_h}$ \textcolor{blue}{on the right hand side of} Eq.\eqref{DDDS} to obtain 
\begin{equation}
\label{XTCM}
0 = \int_{\mathbb T_{L}^{d}}\left(\frac{N_h - N_0}{h}\right) \psi + 
N_{0}(\nabla \Phi_{N_{h}} \cdot \nabla \psi) + \Omega_{N_0}\Phi_{N_{h}}\psi~dx
\end{equation}
for all $\psi \in \mathcal H_{N_0}$; i.e., weakly,
Eq.~\eqref{CVDA} is satisfied.
\end{proof}

For $W$ of positive type, the overall $\mathbb J_A(N_0,\cdot)$
is \b{strictly} convex and uniqueness of $N_h$ is guaranteed.  In the more general circumstances of present interest, uniqueness will be established under the restrictive (presumably unnecessary) hypothesis that $N_0$ is classical.
\begin{lemma} \label{UUNQ}
\b{Given $N_0 \in\mathcal B$}, for $h$ sufficiently small depending only on \b{$N_0$ and various norms on $W$} there is a unique solution to Eq.\eqref{CVDA} such that \b{$N_h \in L^1$.} \b{In particular, the minimizer for $\mathbb J_A(N_0, \cdot)$ from Proposition \ref{DSIC} is unique and so in fact $\log N_h \in \mathcal H_{N_0}$.}
\end{lemma}

\begin{proof}

Let $N_a$, $N_b \in L^1$ denote two purportedly different solutions to 
Eq.~\eqref{CVDA}.  We define $\Psi_a := \log N_a$ and similarly for $\Psi_b$.  We also define 
\[ N_{ab} := N_a - N_b, ~~~\Psi_{ab} := \Psi_a - \Psi_b.\]
From Eq.~\eqref{CVDA} we see that $N_{ab}$ satisfies 
\[ N_{ab} = h \left[\nabla \cdot (N_0 \nabla \Psi_{ab}) - \Omega_{N_0} \Psi_{ab}\right] + h\left[\nabla \cdot (N_0(\nabla W*N_{ab})) - \Omega_{N_0} (W*N_{ab}) \right].\]
Assuming towards a contradiction that $N_{ab}$ is not identically zero, we wish to consider a set which we denote by $\mathcal S$ where the value $N_{ab}$ is sufficiently large. 

Let us examine the difference of 
$N_a$ and $N_b$ subtracting a fraction \b{$hc_W > 0$} from the left hand side where $c_W$ is a constant to be determined shortly:
\begin{equation}
\label{GBLXR}
\begin{split}
(1- hc_W)N_{ab}  
&=
h\left[ \nabla\cdot (N_0 \nabla \Psi_{ab}) -\Omega_{N_0}\Psi_{ab}\right]
\\
& +h\left[\nabla\cdot(N_0\nabla (W*N_{ab})) - \Omega_{N_0}(W*N_{ab})
 - c_W N_{ab})\right].
\end{split}\end{equation}
We claim \b{that on some set (corresponding to the $\mathcal S$ alluded to above)} with a proper choice of $c_W$, the terms on the second line of the above display total to a quantity which is pointwise negative, i.e.,
\[\begin{split}
\b{-hV_{ab}(x)} :=  h\left[\nabla\cdot (N_0\nabla (W*N_{ab})) - \Omega_{N_0}W_{ab} - c_WN_{ab}\right](x) < 0
\end{split}\]
for $x$ in the presumed set.
\b{The fact that here $-V_{ab} < 0$ is pertinent  to the remainder of the argument and to establish this negativity, we will need to consider the cases where $N_{ab} \in L^\infty$ and $N_{ab} \notin L^\infty$ separately.  }

First suppose $N_{ab} \in L^\infty$ and let $m_{ab} = \|N_{ab}\|_\infty$.  In this case we let 
\begin{equation}\label{sss} \mathcal S = \{ N_{ab} > \frac{m_{ab}}{2}\},\end{equation}
where without loss of generality we may assume that this set is of positive measure.  For example, for $x \in \mathcal S$, the term $N_0 (\nabla^2W*N_{ab})$ is easily bounded:
\[ \begin{split} |N_0(x) \cdot (\nabla^2W*N_{ab})(x)|
&=
|~N_0(x) \int_{\mathbb T_L^d} \nabla^2 W(x - y)N_{ab}(y)~dy~|\\
&\leq 
m_{ab} \cdot W_2\|N_0\|_\infty,
\end{split}\]
where $W_2 = \int_{\mathbb T_L^d} |\nabla^2 W(y)|~dy$.
The other terms are bounded proportional to $m_{ab}$ as well \b{with constants now involving $\|\nabla N_0\|_\infty$, $W_1$ (with $W_1$ defined similarly to $W_2$) and $\|\Omega_{N_0}\|_\infty$} (which is finite since $N_0 \in L^\infty$).  
Now since $x \in \mathcal S$, we have $N_{ab}(x) > \frac{1}{2}m_{ab}$, so the negative term $-c_WN_{ab}$ can be made to compensate for any positive contributions from the other terms for $c_W$ sufficiently large depending \b{not on $h$ but} only on the particulars of $W$ and $N_0$.

Let us now address the case where $N_{ab} \notin L^\infty$.  We claim that a modification of the preceding argument also shows $-V_{ab} < 0$ on a modified version of $\mathcal S$.  To this end let us define 
\[ M_{ab} = \sup_{x \in \mathbb T_L^d} \int_{B_a(x)} |N_{ab}(y)|~dy\]
where $B_a(x)$ is the ball of radius $a$ around $x$, where we recall that the range of $W$ is also denoted by $a$.  ($M_{ab}$ is guaranteed to be finite since $N_{ab} \in L^1$ but is ostensibly independent of the total volume.)  
Here let us define 
\[ \mathcal S = \{N_{ab} > M_{ab}\}. \]

Since $W(x - y)$ vanishes outside of $B_a(x)$, it follows that e.g., 
\[\begin{split} |\nabla N_0(x) \cdot \nabla(W*N_{ab})(x)| & \leq |\nabla N_0(x)| \cdot |~\int_{B_a(x)} |\nabla W(x - y) N_{ab}(y)~dy~| \\
&\leq M_{ab} \cdot \|\nabla W\|_\infty \|\nabla N_0\|_\infty .
\end{split}\]
Similar estimates hold for the other terms and so the conclusion follows as before. \b{We note particularly from Eq.~\eqref{GBLXR} that the term $M_{ab}$ is directly suppressed by $N_{ab}$ on the set $\mathcal S$ and so as before $c_W$ only depend on $N_0$ and $W$ and \emph{not} on $N_{ab}, \Psi_{ab}$ or $h$.}

Next we will expand the left hand side of Eq.~\eqref{GBLXR} using the notation 
$$N_{ab} = \Psi_{ab} + [\mathcal E_2(\Psi_a) - \mathcal E_2(\Psi_b)],$$
where $\mathcal E_2(x) = e^x - (1+ x)$.  After some rearrangement, Eq.~\eqref{GBLXR} becomes 
\[ \begin{split}
\Psi_{ab} &= \frac{h}{1- hc_W } \left[\nabla \cdot (N_0 \nabla \Psi_{ab})\right]\\
&- \frac{h}{1- hc_W} \left[\Omega_{N_0} \Psi_{ab} + V_{ab} \right] - \left[\mathcal E_2(\Psi_a) - \mathcal E_2(\Psi_b)\right]. 
\end{split}\]
First let us observe that the second line in the above equation is pointwise negative for $x \in \mathcal S$; we will denote the entirety of the second line by $-P_{ab}$.  Next let us define 
\[ \mathbb K(\cdot) = - \frac{1}{1- hc_W} \nabla \cdot [N_0 \nabla (\cdot)].\]

The equation now takes the form 
\[ (\mathbb I + h\mathbb K)\Psi_{ab} = - P_{ab},\]
where $\mathbb I$ denotes the identity operator.  We note that $\mathbb K$ is a nonnegative self--adjoint operator; indeed, the matrix elements in the standard basis are given by 
\[ \mathbb K_{q,p} = \frac{1}{1-hc_W} (p\cdot q) \hat N_0 (p-q).\] 
We may therefore write 
\begin{equation}\label{DKRII} \Psi_{ab} = - (\mathbb I + h\mathbb K)^{-1} P_{ab}.\end{equation}

Let $\e > 0$ which is envisioned to be small as will be specified later.  We claim that there is a subset of $\mathcal S$ which is of nonzero measure such that $|\Psi_{ab} - \Psi_{ab}^*| < \e$ and $|P_{ab} - P_{ab}^*| < \e$ for some values $\Psi_{ab}^*$ and $P_{ab}^*$.  Indeed, all that is required is the observation that e.g., $\mathcal S = \mathcal S \cap \cup_{k}\mathcal S_{ab, k}$ where $\mathcal S_{ab, k} = \{x: \frac{2}{3}(k - \frac{1}{2})\e <  |\Psi_{ab}| < \frac{2}{3} (k + 1)\e\}$; we obtain a similar decomposition for $P_{ab}$. So (up to a set of measure zero) $\mathcal S =  \mathcal S \cap (\cup_k \mathcal S_{ab, k}) \cap (\cup_\ell \mathcal S_{P, \ell})$.  Since all unions are countable, there must exist $k$ and $\ell$ such that $\mathcal S_{ab, k}  \cap \mathcal S_{P, \ell}$ has nonzero measure; let us denote this set by $\mathcal S_\alpha$ and let $\chi_\alpha$ denote the indicator function of this set.
We will now integrate Eq.\eqref{DKRII} on $\mathcal S_\alpha$:
\begin{equation}\label{PEENN} \int_{\mathbb T_L^d} \chi_\alpha \Psi_{ab} ~dx= - \int_{\mathbb T_L^d} \chi_\alpha (\mathbb I + h\mathbb K)^{-1} P_{ab}~dx.\end{equation}

The left hand side of Eq.~\eqref{PEENN} is within $\e$ of $|\mathcal S_\alpha| \Psi_{ab}^*$.  Next we claim that by the positivity and self--adjointness of the operator $\mathbb K$, we may write the operator identity
\[(\mathbb I +h\mathbb K)^{-1} = \mathbb I - h\mathbb K(\mathbb I+ h\mathbb K)^{-1}.\]
The right hand side of Eq.~\eqref{PEENN} can therefore be written as 
\[ -\int_{\mathbb T_L^d}\chi_\alpha  P_{ab}~dx
+\int_{\mathbb T_L^d}\chi_\alpha 
[h\mathbb K(\mathbb I + h\mathbb K)^{-1} 
P_{ab}] ~dx.\]
In the above we observe that both $(1 + h\mathbb K)^{-1}$ and $h\mathbb K (\mathbb I + h\mathbb K)^{-1}$ are bounded operators e.g., in $L^2$ and further that $h\mathbb K(\mathbb I + h \mathbb K)^{-1}$ has operator norm less than one. 

The first term in the above display is within $\e$ of $-|\mathcal S_\alpha|  P_{ab}^*$.  As for the second term, since the relevant operator is self adjoint, 
\[  \begin{split}&\hspace{-1cm}
\int_{\mathbb T_L^d}\chi_{\alpha}
[h\mathbb K(1 + h\mathbb K)^{-1} 
P_{ab}] ~dx\\
&~~~~\leq
\left [
\int_{\mathbb T_L^d} (h\mathbb K(1 + h\mathbb K)^{-1}\chi_\alpha)^2~dx
\right]^{\frac{1}{2}}~~
\left [
\int_{\mathcal S_\alpha} P_{ab}^2~dx
\right]^{\frac{1}{2}} \leq |\mathcal S_\alpha| (P^*_{ab} + \e),
\end{split}
\]
where we have used that the operator norm of $h \mathbb K(\mathbb I + h \mathbb K)^{-1}$ is less than one.  So the terms on the right hand side of Eq.~\eqref{PEENN} add up to no more than $2\e |\mathcal S_\alpha|$.  \b{Now if $\e \ll \Psi^*_{ab}$ (and hence much less than $m_{ab}$ or $M_{ab}$ depending on which case we are in) we would conclude the result (by contradiction of Eq.~\eqref{PEENN}) via the estimates we have just derived.  }
\end{proof}

\subsection{Overview of the Iteration Scheme}
We now provide the overview of how our JKO--type scheme is to be continued.  Starting with some $N_0$, we define $N_1 = \mbox{argmin}\{\mathbb J_{A}(N_0, \cdot)\}$, $N_2 = \mbox{argmin}\{\mathbb J_{A}(N_1, \cdot)\}$, etc.  
\b{However, the abstract methods used so far only yield $N_1 \in \mathcal H_{N_0}^{-1} \cap L^1$ and $\log N_1 \in \mathcal H_{N_0}^1$ whereas to show convergence of the overall scheme and to prove the main theorem we require additional regularity, specifically uniform upper and lower bounds and $\mathcal D_2$ regularity.}  The improved regularity will follow from suitably strong assumptions on $N_0$ 
which will imply that
$N_1$ (and the successive $N_k$'s) in fact coincides with a \emph{classical} solution of Eq.\eqref{CVDA}, with well controlled norms.  The detailed derivation of suitable estimates are the subject of Appendix A; let us summarize the setting of this appendix here:

(a)  The variables used in the appendix are logarithmic:
$$
\Psi  =  \log N.
$$

(b)  We employ ``Fourier norms'':  $f\in \mathcal D_{\ell}$ means that the Fourier coefficients 
of the $\ell^{\text{th}}$ derivatives of $f$ are (absolutely) summable \b{(see Eq.~\eqref{DKBK})}.  These norms are discussed in a bit more detail in Section \ref{PLWF}.  

(c)  We assume that the initial $\Psi_{0}$ is in $\mathcal D_{2}$ and we also adopt the additional regularity assumptions on the interaction potential, namely,
$$
\|W\|_{\mathcal D_2} < \infty \mbox{ and } v_{4}  := \sup_{k}k^{4}|\hat{W}(k)|  < \infty.
$$
\b{(Often, one of these assumptions on $W$ may be redundant: E.g., in $d=1$, $v_4 < \infty$ automatically implies $W \in \mathcal D_2$ whereas in the sufficiently high dimensions one may expect the reverse.)}

We now summarize the logical steps entailed in the program:

\textbf{Step 1.} \b{We assume $N_0 \in \mathcal B$ and $\Psi_0 \in \mathcal D_2$.}

\textbf{Step 2.} We find $N_1 = \inf \left\{\mathbb J_A(N_0, N): N \in \mathcal H_{N_0}^{-1}\right\}$ (see Proposition \ref{UMM}).

\textbf{Step 3.} By a variational argument, we conclude that $N_0$ and $N_1$ provides a one step time discretization of Eq.\eqref{GCD} and in fact $N_1$ is positive almost everywhere (see Proposition \ref{MIN}) and $\Phi_{N_1} \in \mathcal H_{N_0}$ (see Proposition \ref{DSIC}). 

\textbf{Step 4.} Since $N_1$ satisfies the stationarity condition Eq.\eqref{CVDA} and $N_0, N_1$ satisfy the requisite conditions of Lemma \ref{UUNQ}, $N_1$ is uniquely specified. 

\textbf{Step 5.} Lemma \ref{UUNQ} also implies that $N_1$ coincides with the classical solution obtained in Appendix A: I.e., $\Psi_1 \in \mathcal D_2$  (see Corollary \ref{CEB}) and so $N_1 \in \mathcal B$.  (It is noted that since $\|\Psi_1\|_{\mathcal D_2}$ is an upper bound on $\|\nabla^2 \Psi_1\|_\infty$, the $\mathcal D_2$--norm is stronger than the $\mathscr C^2$--norm.)
We may now repeat the previous steps to obtain $N_2, N_3$, etc.  For any fixed $k$, this allows for the production of $N_1, \dots, N_k$, provided that $h$ is sufficiently small. 

\textbf{Step 6.} After $k$ iterations, the macroscopic time achieved is only $kh$ -- thus vanishing with $h$.  However,  we achieve a guaranteed nonzero macroscopic time, i.e., for some fixed $T > 0$ and all $h$ sufficiently small, the process can be carried out for at least the order of $h^{-1} T$ iterations (see Proposition \ref{PYBV}).

\textbf{Step 7.} Via a comparison with the continuum solution (see Proposition \ref{TEE}) it is shown that the macroscopic time can be extended indefinitely; here $h$ has to be suitably small depending on the prescribed macroscopic time of simulation.

\subsection{Convergence}
\label{SecConvergence}

Here we will show that the discretization scheme based on Eq.~\eqref{CVDA} indeed converges to a solution to Eq.\eqref{GCD}.  We reiterate:  Starting with some $N_0$, we define $N_1$, $N_2$, \dots as far as can be done.  
On occasion, we will denote $N_k$, the $k^{\text{th}}$ iterate by $N_{t}^{[h]}$ for time step $h$ when $k$ satisfies $kh \leq t < (k+1)h$; it is in this context that we take the $h\to 0$ limit.}

Assuming that $N_{t}^{[h]}$ exists for nonzero $t$ uniformly in $h$, the extraction of a \b{vague limit} is relatively easy: Indeed, since each step of the iteration only lowers the free energy we have that $N \log N$ is integrable and hence so is $N$ and so a (subsequential) vague limit certainly exists.  Further, limited results pertaining to continuity in time -- H\"older--1/2 -- can also be deduced from the structure implicit in the JKO type scheme, along the lines of what was done in \cite{JKO}.  However, these ideas do not suffice for a demonstration that the limiting object actually satisfies Eq.\eqref{GCD}. 

In order (to acquire enough control) to show that the limiting $N_t$ satisfies the requisite equation, we have need for rather strong estimates, which we provide in Appendix A using Fourier methods.  The analysis in Appendix A is performed essentially in the context of classical solutions, but, by the uniqueness statement in \b{Lemma \ref{UUNQ}}, this solution will coincide with the minimizer of the iterative scheme.  The setting for Appendix A was summarized in the previous subsection. 

For the purposes of the next theorem, \b{consistent with the use in the proof of Proposition \ref{TEE},} let us use the notation $[\cdot]_{t}^{[h]}$ for the various quantities encountered. 

\begin{thm}
\label{FVJJ}
Let $T > 0$ be arbitrary (so that the iterative process is suitably valid for all $h < h_T$ with $h_T$ as in Proposition \ref{TEE}).  
Letting $\Psi_{t}^{[h]} = \log N_t^{[h]}$, we have that $\Psi_t^{[h]}$ converges to a weak solution, $\Psi_t$,
 of Eq.~\eqref{GCD} (written in these logarithmic variables) as $h$ tends to zero, i.e., if $b \in \mathscr C^1(\mathbb T_L^d \times (0, T))$,
\[\b{\int_{\mathbb T_L^d \times (0, 1)} N_t \frac{\partial b}{\partial t}~dxdt = \int_{\mathbb T_L^d \times (0, 1)} N_t (\nabla \Phi_{N_t} \cdot \nabla b) + \Omega_{N_t} \Phi_{N_t} b~dxdt}, \]
where as before $\b{\Phi_{N_t} = \log N_t -\mu + w_{N_t}}$.
 
Moreover, 

(A) This convergence is strong in the $\mathcal D_{1}$--norm and uniform in the $\mathcal D_{0}$--norm.

(B) $N_t = e^{\Psi_t}$ is the unique solution to the continuous time equation as given by Eq.~\eqref{GCD} which is $\mathscr C^\infty$ for positive times and $N_t \rightarrow N_0$ strongly in $\mathcal D_0$ as $t \rightarrow 0$.
\end{thm}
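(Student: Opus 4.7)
My plan is to combine the strong \emph{a priori} estimates contained in Appendix A with a standard compactness-and-pass-to-the-limit argument, then use a Gr\"onwall-type uniqueness argument to identify the limit. First, I would invoke Propositions \ref{PYBV} and \ref{TEE} to obtain, for every $T>0$ and every sufficiently small $h<h_T$, uniform-in-$h$ bounds on $\|\Psi_t^{[h]}\|_{\mathcal D_2}$ on $[0,T]$ (and thus on $\|N_t^{[h]}\|_{\mathcal D_2}$, on $\|\log N_t^{[h]}\|_\infty$, and in particular on $\Omega_{N_t^{[h]}}$ from below and above). The monotonicity of the iterative scheme guarantees in parallel that $\mathscr G_\mu(N_t^{[h]})\le\mathscr G_\mu(N_0)$, so the entropy stays integrable. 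Since the Fourier embedding $\mathcal D_2\hookrightarrow\mathcal D_1$ is compact (the tails gain a factor of $|k|^{-1}$), the family $\{\Psi_t^{[h]}\}$ is precompact in $\mathcal D_1$ for each fixed $t$, and precompact in $\mathcal D_0$ uniformly.

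Next I would establish equicontinuity in time. The weak discretization identity (\ref{CVDA}) together with the uniform $\mathcal D_2$ bound yields $\|N_{(k+1)h}^{[h]}-N_{kh}^{[h]}\|_{\mathcal D_{-1}}=O(h)$, which translates, via the uniform lower bound on $N_t^{[h]}$, to an $O(h)$ estimate for $\Psi_{(k+1)h}^{[h]}-\Psi_{kh}^{[h]}$ in a suitable negative-index norm. An Aubin--Lions / Arzel\`a--Ascoli type interpolation between the uniform $\mathcal D_2$ bound and the $O(h)$ time increment in $\mathcal D_{-1}$ then produces a subsequence $\Psi_t^{[h]}$ converging strongly in $C([0,T];\mathcal D_1)$ and uniformly in $\mathcal D_0$. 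This is exactly the claim of part (A).

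To identify the limit, I would pass to $h\to 0$ in the iterated weak form (\ref{XTCL}). For a test function $\psi$, summing (\ref{XTCL}) from step $0$ to step $k\le T/h$ gives an identity of the form $\int(N_{kh}^{[h]}-N_0)\psi = -\int_0^{kh}\int[N_{\lfloor t/h\rfloor h}^{[h]}(\nabla\Phi_{N^{[h]}_{\lfloor t/h\rfloor h+h}}\cdot\nabla\psi)+\Omega_{N_{\lfloor t/h\rfloor h}^{[h]}}\Phi_{N^{[h]}_{\lfloor t/h\rfloor h+h}}\psi]\,dx\,dt$. The strong $\mathcal D_1$ convergence together with the uniform-in-$h$ regularity controls each nonlinear term: the drift $N\nabla\Phi_N=\nabla N+N\nabla w_N$ converges because $\nabla N^{[h]}\to\nabla N$ in $\mathcal D_0$ and $W$ is smoothing; the reaction $\Omega_{N}\Phi_{N}$ reduces, by its very definition (\ref{XXZ}), to the right-hand side of (\ref{GCD}), which is continuous in $N$ in $\mathcal D_0$ given the uniform lower bound on $N_t^{[h]}$. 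The ``initial-density freeze'' built into the approximate distance then vanishes as $h\to 0$, because the $O(h)$ time-Lipschitz estimate ensures $\|N_{\lfloor t/h\rfloor h}^{[h]}-N_{\lfloor t/h\rfloor h+h}^{[h]}\|_{\mathcal D_0}\to 0$.

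Finally, for part (B), the $C^\infty$ regularity for positive times is bootstrapped directly from the $\mathcal D_2$ estimates of Appendix A together with the parabolic character of Eq.(\ref{GCD}): once one has a weak solution with $\Psi\in L^\infty([0,T];\mathcal D_2)$, standard Fourier-side iteration using the finiteness of $v_4$ and $\|W\|_{\mathcal D_2}$ propagates smoothness. Uniqueness among such solutions follows from a Gr\"onwall argument: for two solutions $N^A,N^B$ sharing the initial datum $N_0$, the difference $\delta=N^A-N^B$ satisfies a linear parabolic equation with coefficients bounded by the common $\mathcal D_2$ norm, and testing against $\delta$ (or better, against $\log N^A-\log N^B$ to exploit the convex-entropy structure of $\mathscr G_\mu$ plus H--stability) gives $\tfrac{d}{dt}\|\delta\|^2\le C\|\delta\|^2$, hence $\delta\equiv 0$. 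Convergence $N_t\to N_0$ in $\mathcal D_0$ as $t\to 0$ is then inherited from the uniform-in-time convergence of the discretization. The principal obstacle in this plan is controlling the discrepancy introduced by freezing the coefficients at $N_{kh}^{[h]}$ rather than using the true $N_t^{[h]}$ across an entire step; this is precisely what forces the strong $\mathcal D_1$ (not merely weak) convergence of the iterates, and is the reason the Appendix A estimates are required to reach the $\mathcal D_2$ level rather than something softer.
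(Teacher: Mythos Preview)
Your proposal is correct and follows the same essential strategy as the paper: uniform $\mathcal D_2$ bounds from Appendix~A, $O(h)$ time increments, compactness, and passage to the limit in the summed weak form with the index-mismatch terms vanishing. The organization is reversed, however: the paper first identifies the weak limit as a solution by testing against a space--time function $b(x,t)$ compactly supported in $(0,T)$, summing the discrete identities~(\ref{XTCL}), doing summation by parts on the left, and bounding each index-discrepancy residual directly via the $L^\infty$ estimates of Corollary~\ref{CEB2}; only afterwards is part~(A) proved, in Appendix~A after Proposition~\ref{TEE}, by a bare-hands Fourier argument (pointwise convergence of each mode plus $\mathcal D_2$-tightness, then a triangle-inequality subsequence argument for the uniform $\mathcal D_0$ statement). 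One small inefficiency in your plan: you extract the time increment in $\mathcal D_{-1}$ and then interpolate via Aubin--Lions, but Proposition~\ref{XTY} already gives $\|\Psi_{t+h}^{[h]}-\Psi_t^{[h]}\|_{\mathcal D_0}\le h\,\mathfrak b_0$ outright, so the equicontinuity is available at the $\mathcal D_0$ level without any negative-index detour. For part~(B) the paper merely cites standard parabolic regularity and does not write out a Gr\"onwall uniqueness argument, so your treatment there is in fact more detailed than the original.
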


\begin{proof}
Item (A) will be established in Appendix A after the proof of Proposition \ref{TEE} and item (B) will be addressed briefly at the end of the proof.  Let us now address the main convergence result.  We will first establish that if $N_t$ is a weak limit of $N_t^{[h]}$ as $h$ tends to zero, then $N_t$ is a weak solution to Eq.\eqref{GCD}. (It is clear, e.g., from the discussion before the statement of this theorem that one can always extract a weak limit.) 

Now consider some $b \in \mathscr C^1(\mathbb T_L^d \times (0, T))$
which is integrated against both sides of the iteration equation as given in Eq.~\eqref{XTCL} and then summed over the order of $Th^{-1}$ iterations (and using $\Phi_t^{[h]} = \Psi_t^{[h]} - \mu + w_{N_t^{[h]}}$):
\begin{equation}
\begin{split}
&\hspace{1.5cm}\sum_k\int_{\mathbb T_{L}^{d}}\left(\frac{N_{k+1} - N_k}{h}\right)\b{b_k}~dx=\\
 &-\sum_k \int_{\mathbb T_L^d} N_k\left(\nabla (\Psi_{k+1} + w_{N_{k+1}}) \cdot \nabla \b{b_k}\right) + \Omega_{N_k}\left(\Psi_{k+1} - \mu + w_{N_{k+1}}\right)\b{b_k}~dx,
\end{split}
\end{equation}
where $b_k$ is a suitable time average over the interval $hk \leq t < h(k+1)$. 
The left hand side, after summation by parts, weakly converges to the integral of $-N_t^{[h]} (\partial b/\partial t)$.  As for the right hand side,  for convenience we will now go over to the notation $N_t^{[h]}$ instead of $N_k$ and $N_{t+h}^{[h]}$ instead of $N_{k+1}$ etc., and then the sum over $k$ can be replaced by an integral over $[0, T]$.  First we observe that if it were the case that all the indices were in agreement and e.g., equal to $k+1$, then the right hand side can be realized entirely as a weak equation for $N_t^{[h]}$ (with most of the burden of differentiation passed on to $b$) which would converge weakly to the relevant limit.  What we must estimate then is the differences caused by the discrepancy in indices.  For example, in the term containing $\Psi$, forcing the indices to match yields the residual term 
$$
-\int_{0}^{T}\int_{\mathbb T_{L}^{d}} (N_{t + h}^{[h]}  - N_{t}^{[h]}) (\nabla\Psi_{t+h}^{[h]} \cdot \nabla b)
~dxdt.
$$
By the results obtained in Appendix A, \b{specifically Corollary \ref{CEB2}, iii),} we have that
$|\nabla \Psi_{t}^{[h]}|$ is uniformly bounded  (e.g., in $L^{\infty}$) in both $h$ and $t$
while $N_{t + h}^{[h]} - N_{t}^{[h]}  =  \text{e}^{\Psi_{t + h}^{[h]}} - \text{e}^{\Psi_{t }^{[h]}}$
is bounded above by $h$ times a function which, again, has a uniform $L^{\infty}$ bound.  
Hence, this error term disappears from consideration in the $h\to 0$ limit.

Identical considerations apply to the term $N_{t}^{[h]} (\nabla w_{N_{t + h}}^{[h]} \cdot \nabla b)$.  However, here the situation is even less demanding since 
$\nabla w_{N_{t+h}}^{[h]}$ does not even involve gradients of $\Psi$.  As for the inhomogeneous term, 
it is slightly easier to do the reindexing on the $\Phi$--terms.  We write
$$
\Phi_{t + h}^{[h]} \Omega_{N_t }^{[h]}  =
\Phi_{t}^{[h]} \Omega_{N_t }^{[h]}
+
(\Phi_{t + h}^{[h]} - \Phi_{t }^{[h]})\Omega_{N_t }^{[h]}.
$$
The leading term on the right of the above display 
is of the correct form.  Examining the definition of 
$\Omega_{N_t}^{[h]}$, it is clear that if $N_t$ is bounded \b{in $L^\infty$ (which follows from Corollary \ref{CEB2}, iii))} 
then so is $\Omega_{N_t}^{[h]}$.  Since $\Phi_{N}  =  \Psi_N -\mu + w_{N}$, \b{from Corollary \ref{CEB2} ii) and iii),} we have that $|\Phi_{t + h}^{[h]} - \Phi_{t }^{[h]}|$ is bounded by order $h$ and this term also disappears 
in the $h \rightarrow 0$ limit.  

Finally, by standard regularity results about (uniformly) parabolic equations, we have that $N_t$ is smooth (\cite{esk}) for positive times and the convergence to initial data can be easily gleamed from item (A) and Proposition \ref{XTY}. 
\end{proof}

\section{Proof of the Main Theorem}
In this section, we provide a proof of the principal \textit{result} of this work.  Namely:  If the initial $N_{0}$ is in the vicinity of the uniform state, and the latter is ``sufficiently stable'' then the subsequent dynamics is characterized by exponential convergence to this state.  

\subsection{Convexity Estimates} 

In this subsection, we aggregate all the results concerning \textit{convexity} of the function
$\mathscr G_{\mu}(\cdot)$ which will be used in the proof of the main theorem.  First, it is seen that 
if $W$ is of positive type then $\mathscr G_{\mu}(\cdot)$ is always a \b{convex functional of $N$} for all $\mu$.
But, it is also known that such circumstances foreclose any possibility of a phase transition.  
However, even here, the rate of convergence to equilibrium is still of interest.  
More pertinently in the general cases \b{under study}, it is not unreasonable to assume that if $\text{e}^\mu$ is sufficiently small \textit{and} overall the fluid is reasonably homogeneous with a density not too far from the uniform state that some local convexity properties should ensue.

First,
recall the definition of (the density of) the uniform state 
$\text{\textsc{m}}_{0}$ which is the solution to 
$\text{\textsc{m}}_{0}  =  \text{e}^{\mu-w\textsc{m}_0}$ with $w$ being the integral of $W$, as described following Eq.~\eqref{ASKY}.  In what follows, instead of using 
$\mu$ -- which is conceivably large and negative -- as our parameter we will use the quantity
$\text{\textsc{m}}_{0} = \text{\textsc{m}}_{0}(\mu)$ as our (small) parameter.

\begin{proposition}
\label{OGTR}
\b{Let $N_t \in \mathscr C^2$ be a classical solution of Eq.~\eqref{GCD}.}
Let $\kappa$ be any number such that
$0 < \kappa < \frac{1}{2}$  and suppose that at time $t_{0} \geq 0$
the density $N_{t_0}$ satisfies the pointwise bounds
$$
\kappa \text{\textsc{m}}_{0} < N_{t_0}(x) 
< \frac{1}{\kappa} \text{\textsc{m}}_{0}.
$$
Then, if $\text{\textsc{m}}_{0}$ is sufficiently small, this condition persists for all time 
$t > t_{0}$.  
\end{proposition}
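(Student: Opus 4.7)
The plan is to give a maximum--principle / barrier argument directly on Eq.(\ref{GCD}), exploiting the fact that for $\textsc{m}_{0}$ small the reaction terms dominate the convective--diffusive terms by a power of $\textsc{m}_{0}$.  By the regularity results in Section \ref{JKO} and Theorem \ref{FVJJ}, $N_{t}$ may be regarded as a classical solution for $t > t_{0}$ depending continuously on $(x,t)$, so pointwise calculus and a first--touching construction are justified.

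The first step is a contradiction setup: if the pointwise bounds failed at some later time, continuity would produce a first time $t^{*} \in (t_{0}, \infty)$ at which $N$ attains one of the barriers at a spatial point $x^{*} \in \mathbb T_{L}^{d}$.  I will treat only the upper barrier $N(x^{*},t^{*}) = \tfrac{1}{\kappa}\textsc{m}_{0}$, the lower barrier case $\kappa \textsc{m}_{0}$ being entirely analogous.  At such a space--time point one has $\nabla N = 0$, $\nabla^{2} N \leq 0$ and $\partial_{t} N \geq 0$; evaluating Eq.(\ref{GCD}) and using $\nabla\cdot(N\nabla w_{N}) = N\nabla^{2} w_{N}$ yields
\begin{equation*}
\partial_{t} N \bigr|_{(x^{*},t^{*})} \leq N\nabla^{2} w_{N} + \mathrm{e}^{\frac{1}{2}(\mu - w_{N})} - N\,\mathrm{e}^{-\frac{1}{2}(\mu - w_{N})}.
\end{equation*}

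The heart of the argument is a scale comparison of the right--hand side as $\textsc{m}_{0}\to 0$.  From Eq.(\ref{ASKY}) I have $\mathrm{e}^{\mu} = \textsc{m}_{0}\,\mathrm{e}^{w\textsc{m}_{0}}$, so the reaction evaluated at $N = \textsc{m}_{0}/\kappa$ combines into
\begin{equation*}
\sqrt{\textsc{m}_{0}}\,\bigl[\,\mathrm{e}^{\frac{1}{2}(w\textsc{m}_{0} - w_{N})} - \tfrac{1}{\kappa}\,\mathrm{e}^{-\frac{1}{2}(w\textsc{m}_{0} - w_{N})}\,\bigr].
\end{equation*}
Since $\|N - \textsc{m}_{0}\|_{\infty} \leq \textsc{m}_{0}(\tfrac{1}{\kappa} - 1)$ for $t \leq t^{*}$, and $W$ has finite range so that $\|W\|_{L^{1}}$ is a volume--independent constant, I get $|w\textsc{m}_{0} - w_{N}| = O(\textsc{m}_{0})$; the bracketed quantity therefore tends to $1 - \tfrac{1}{\kappa} < 0$ as $\textsc{m}_{0} \to 0$, and the reaction is of order $-\sqrt{\textsc{m}_{0}}\,(\tfrac{1}{\kappa} - 1)$.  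By contrast the diffusion--convolution contribution is of smaller order, $|N\nabla^{2} w_{N}| \leq (\textsc{m}_{0}/\kappa)^{2}\|\nabla^{2} W\|_{L^{1}} = O(\textsc{m}_{0}^{2})$ with a constant depending only on $\kappa$ and $W$ (the $L^{1}$ norm of $\nabla^{2}W$ being controlled via $\|W\|_{\mathcal D_{2}}$ and the finite range).  Since $\sqrt{\textsc{m}_{0}} \gg \textsc{m}_{0}^{2}$ as $\textsc{m}_{0} \to 0$, for $\textsc{m}_{0}$ chosen small enough (depending only on $\kappa$ and $W$) I conclude $\partial_{t} N(x^{*},t^{*}) < 0$, contradicting $\partial_{t} N \geq 0$.

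The main anticipated obstacle is a minor technical issue in the continuity step: the bounds at $t_{0}$ are strict, and one must cleanly extract an actual first--touching $(x^{*},t^{*})$ at which the pointwise derivative inequalities genuinely hold.  This is handled by the standard device of inflating the barriers by a slack $\e > 0$ and later taking $\e \to 0$, or equivalently by examining $\sup_{(x,t)} [N(x,t) - \tfrac{1}{\kappa}\textsc{m}_{0}]$ using joint continuity of $N$ and compactness of $\mathbb T_{L}^{d}$.  The lower--barrier case proceeds by the same computation, the reaction contribution at $N = \kappa \textsc{m}_{0}$ evaluating instead to $\sqrt{\textsc{m}_{0}}\,[1 - \kappa + o(1)] > 0$, which again dominates the $O(\textsc{m}_{0}^{2})$ convolution error and so precludes $\partial_{t} N \leq 0$ at a minimum.
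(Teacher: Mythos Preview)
Your proposal is correct and follows essentially the same maximum--principle / first--touching argument as the paper: at a spatial extremum the Laplacian has the favorable sign, the reaction term is rewritten via $\mathrm{e}^{\mu} = \textsc{m}_{0}\mathrm{e}^{w\textsc{m}_{0}}$ to reveal a contribution of order $\sqrt{\textsc{m}_{0}}$ with the correct sign, and this dominates the $N\nabla^{2}w_{N}$ convolution term which is of strictly higher order in $\textsc{m}_{0}$. The paper's write--up is slightly terser (it bounds the convolution term as $O(\textsc{m}_{0})$ rather than your sharper $O(\textsc{m}_{0}^{2})$, and handles the first--touching continuity issue in a single sentence), but the logic and the key scale comparison are identical.
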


\begin{proof}
Examining Eq.~\eqref{GCD} and recalling that we can reason classically, let us assume that 
$x_{\sharp}$ is a point of maximum or minimum.  Then at $x = x_{\sharp}$, we have
$$
\frac{\partial N_{t}(x_{\sharp})}{\partial t} \geq \b{N_t}\nabla^{2}w_{N_{t}} -
\left[N_{t}\text{e}^{-\frac{1}{2}(\mu - w_{N_{t}})} - \text{e}^{+\frac{1}{2}(\mu - w_{N_{t}})}\right]
$$
for a minimum and with the opposite inequality if $x_{\sharp}$ is a maximum.  

Now we claim that for 
$\text{\textsc{m}}_{0}$ sufficiently small, \b{it is the case that for all $x$, provided $\kappa \textsc{m}_0 \leq N_t(x) < \frac{1}{\kappa} \textsc{m}_0$, we have}
$$
-\kappa\text{\textsc{m}}_{0}
\text{e}^{-\frac{1}{2}(\mu - w_{N_{t}})} +
 \text{e}^{+\frac{1}{2}(\mu - w_{N_{t}})}
 \geq 
 \kappa \text{\textsc{m}}_{0}^{\frac{1}{2}}
$$
and 
$$
-\frac{1}{\kappa}\text{\textsc{m}}_{0}
\text{e}^{-\frac{1}{2}(\mu - w_{N_{t}})} +
 \text{e}^{+\frac{1}{2}(\mu - w_{N_{t}})}
 \leq 
 -\text{\textsc{m}}_{0}^{\frac{1}{2}}.
 $$
 Indeed, \b{since $\textsc{m}_0^{-\frac{1}{2}} = \text{e}^{-\frac{1}{2}(\mu - w\textsc{m}_0)}$,} the second display amounts to the inequality
 $\text{e}^{\frac{1}{2}(w_{N_{t}} - w\text{\textsc{m}}_{0})}  -
 \kappa\text{e}^{-\frac{1}{2}(w_{N_{t}} - w\text{\textsc{m}}_{0})}
 \geq \kappa$
 and we can use
 $w_{N_{t}}  \geq  - \frac{1}{\kappa}w_0\text{\textsc{m}}_{0}$ \b{(where $w_0$ is the integral of $|W|$)} while the first display reduces to 
 $\text{e}^{\frac{1}{2}(w\text{\textsc{m}}_{0} - w_{N_{t}})}  -
 \kappa\text{e}^{-\frac{1}{2}(w\text{\textsc{m}}_{0} - w_{N_{t}})}
 \geq \kappa$ and we can also use $w_{N_{t}}  \leq  \frac{1}{\kappa}w_0\text{\textsc{m}}_{0}$.  The claimed result is now manifest 
 for $\text{\textsc{m}}_{0}$ sufficiently small.

Let us suppose then that at some time $t_{\sharp}$, for the first time, the density achieves the 
 value $\frac{1}{\kappa}\text{\textsc{m}}_{0}$ and this occurs at the point $x = x_{\sharp}$ -- which is its maximum.  
 Then we would have \b{(with $w_2$ being the integral of $|\nabla^2 W|$)}
 $$
 \frac{\partial N_{t_{\sharp}}(x_{\sharp})}{\partial t} 
 \leq - \text{\textsc{m}}_{0}^{\frac{1}{2}} + \frac{1}{\kappa}\text{\textsc{m}}_{0}w_2,
 $$
 which is strictly negative for $\text{\textsc{m}}_{0}$ sufficiently small.  
 While this immediately implies that at the point $x_{\sharp}$, the density can grow no bigger, 
 it actually implies, by continuity, that such happenstance could never occur in the first place:  At 
$t = t_{\sharp}^{-}$ before the density at $x = x_{\sharp}$ achieved 
$\frac{1}{\kappa}\text{\textsc{m}}_{0}$, the derivative was already negative.  

Similar considerations apply -- for  $\text{\textsc{m}}_{0}$ sufficiently small -- if we investigate 
the first time that the density has fallen as low as $\kappa \text{\textsc{m}}_{0}$.
\end{proof}

Consider, then, the convex set $\mathcal B_{\kappa} \subseteq \mathcal B$ consisting of those densities which satisfy the bounds featured in Proposition \ref{OGTR}.  (It is noted that the parameters of the upper and lower bounds need not be related.  However, the condition is natural for the variable $\Psi = \log N$.)  Our next claim is that if $\kappa  \text{\textsc{m}}_{0}$ is sufficiently small then the functional 
$\mathscr G_{\mu}(\cdot)$ restricted to $\mathcal B_{\kappa}$ is convex:
\begin{proposition}
\label{BBGG}
For $\text{\textsc{m}}_{0}/\kappa < \vartheta^{\sharp}$
where
$$
\frac{1}{\vartheta ^{\sharp}}  =  \max_{k}\left\{|\hat{W}(k)|\mid \hat{W}(k) < 0\right\},
$$
the functional $\mathscr G_{\mu}(\cdot)$ restricted to $\mathcal B_{\kappa}$ is convex.
And, therefore, $N \equiv \text{\textsc{m}}_{0}$ is the unique minimizer in 
$\mathcal B_{\kappa}$.  
In the above we may take $\vartheta^{\sharp} = \infty$ if the interaction is of positive type.
\end{proposition}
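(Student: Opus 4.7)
\textbf{Proof plan for Proposition \ref{BBGG}.} The plan is to verify strict convexity of $\mathscr G_\mu$ on $\mathcal B_\kappa$ by examining its Hessian and using Fourier analysis to incorporate the assumed bound on negative Fourier modes of $W$. First I would compute, for $N \in \mathcal B_\kappa$ and $\eta$ a (mean-inclusive) perturbation, the quadratic form
\begin{equation*}
\left. \frac{d^{2}}{d\varepsilon^{2}}\right|_{\varepsilon=0} \mathscr G_{\mu}(N + \varepsilon\eta)
= \int_{\mathbb T_{L}^{d}} \frac{\eta^{2}}{N}\, dx
+ \int_{\mathbb T_{L}^{d}\times \mathbb T_{L}^{d}} W(x-y)\,\eta(x)\eta(y)\, dx\, dy.
\end{equation*}
The first (entropic) term is manifestly positive, while the interaction term, via Parseval, becomes $L^{-d}\sum_{k}\hat W(k)|\hat \eta(k)|^{2}$; only the modes with $\hat W(k) < 0$ threaten convexity.

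Next I would use the definition of $\mathcal B_\kappa$ to obtain a pointwise upper bound $N \leq \textsc{m}_0/\kappa$, which gives the lower bound
\begin{equation*}
\int_{\mathbb T_{L}^{d}} \frac{\eta^{2}}{N}\, dx
\;\geq\; \frac{\kappa}{\textsc{m}_{0}} \|\eta\|_{L^{2}}^{2}
= \frac{\kappa}{L^{d}\,\textsc{m}_{0}} \sum_{k} |\hat \eta(k)|^{2}.
\end{equation*}
Combining the two contributions yields the mode-by-mode estimate
\begin{equation*}
\left. \frac{d^{2}}{d\varepsilon^{2}}\right|_{\varepsilon=0} \mathscr G_{\mu}(N + \varepsilon\eta)
\;\geq\; \frac{1}{L^{d}} \sum_{k} \Bigl( \frac{\kappa}{\textsc{m}_{0}} + \hat W(k) \Bigr) |\hat \eta(k)|^{2}.
\end{equation*}
On modes where $\hat W(k)\ge 0$ the bracket is automatically positive. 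On the remaining modes the assumption $\textsc{m}_0/\kappa < \vartheta^\sharp$ reads precisely $\kappa/\textsc{m}_0 > |\hat W(k)|$ uniformly over $k$ with $\hat W(k) < 0$ (vacuously so if $W$ is of positive type, matching the $\vartheta^\sharp = \infty$ clause). Hence the bracket is bounded below by a strictly positive constant, giving strict convexity of $\mathscr G_\mu$ on $\mathcal B_\kappa$.

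Finally, to identify the unique minimizer, I would observe that $\mathcal B_\kappa$ is convex (as an intersection of the pointwise bounds, which are linear in $N$) and that $N \equiv \textsc{m}_0$ lies strictly in its interior since $\kappa < \tfrac{1}{2} < 1$. The Kirkwood--Monroe equation (Eq.\eqref{ASKY}) shows that $\textsc{m}_0$ satisfies the Euler--Lagrange equation $\Phi_N = \log N - \mu + w_N = \mathrm{const}$, so $\textsc{m}_0$ is an interior critical point of a strictly convex functional on $\mathcal B_\kappa$; standard convex analysis (take any competitor $N_1$, consider $\tfrac{1}{2}(N_1 + \textsc{m}_0)$ and exploit strict convexity) then delivers uniqueness.

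The only substantive step is the Fourier-based convexity estimate; the main care needed is to verify that the extremal negative-mode constant $1/\vartheta^\sharp$ is precisely what emerges from the worst case, so that the hypothesis $\textsc{m}_0/\kappa < \vartheta^\sharp$ is seen to be exactly the convexity threshold rather than a slightly stronger sufficient condition.
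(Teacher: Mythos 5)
Your proof is correct and follows essentially the same route as the paper: compute the second variation, split it into the entropic term (bounded below by $\kappa/\textsc{m}_0$ times $\|\cdot\|_{L^2}^2$ using the pointwise bound from $\mathcal B_\kappa$) and the interaction term (handled via Parseval with the negative modes controlled by $1/\vartheta^\sharp$), then invoke stationarity of $\textsc{m}_0$ plus strict convexity for uniqueness. The only cosmetic difference is that you organize the estimate mode-by-mode rather than bounding each term in $L^2$ separately, which makes the sharpness of the threshold a bit more visible but is mathematically the same computation.
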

\begin{proof}
Let $N_{A}$, $N_{B}$ be temporary notation for densities in 
$\mathcal B_{\kappa}$ and similarly, let us define $N_{s} : = (1-s)N_{A} + sN_{B}$ and
$R := N_{B} - N_{A}$.  A direct calculation shows
$$
\frac{d^{2}\mathscr G_{\mu}(N_{s})}{ds^{2}}
=
\int_{{\mathbb T_{L}^{d}}}\frac{R^{2}}{N_{s}}dx
+
\int_{{\mathbb T_{L}^{d}}\times {\mathbb T_{L}^{d}}}
\hspace{-.25cm}
W(x-y)R(x)R(y)dxdy.
$$
The first term on the right is larger than $(\kappa /\text{\textsc{m}}_{0}) \cdot \|R\|^{2}_{L^{2}}$ and as for the second, we have
$$
\int_{{\mathbb T_{L}^{d}}\times {\mathbb T_{L}^{d}}}
\hspace{-.25cm}
W(x-y)R(x)R(y)dxdy  =  \frac{1}{L^{d}}\sum_{k}\hat{W}(k)|\hat{R}(k)|^{2}
\geq -\frac{1}{\vartheta^{\sharp}}\cdot \|R\|^{2}_{L^{2}}
$$
and the primary statement is proved.  The secondary statement is immediately clear since
$N \equiv \text{\textsc{m}}_{0}$ is always a stationary point and the convexity that was just proved is actually strict.
\end{proof}

\begin{remark}
We remark that notwithstanding factors of order unity -- e.g., $\kappa$ -- the estimates here (and presumably those in Proposition \ref{OGTR}) are reasonably sharp.  Indeed, 
$\text{\textsc{m}}_{0} = \vartheta^{\sharp}$ is the point where the stationary solution
$N_{t} \equiv \text{\textsc{m}}_{0}$ is \textit{linearly} unstable and, translating the results of
\cite{CP} to the current context, when $\text{\textsc{m}}_{0} = \vartheta_{\t} < \vartheta^{\sharp}$, 
already there are non--trivial minimizers for $\mathscr G_{\mu}(\cdot)$.
\end{remark}

\subsection{Proof of the Main Theorems}

\emph{Proof of Theorem \ref{MDKR}.}
Let $t > 0$ and $T > t$. 
Let \b{$\kappa^\prime > \kappa$}.  By Theorem \ref{FVJJ} we may consider $h$'s sufficiently small so that throughout
$(0,T)$, the actual
continuum solution $N_{t}$ and the discretization $N_{t}^{[h]}$ differ only slightly in
e.g., the $\mathcal D_{1}$--norm so that
for all $t\in [0,T]$, we have $N_{t}^{[h]} \in \mathcal B_{\kappa}$.  It follows by Proposition \ref{BBGG} that $\mathscr G_\mu(\cdot)$ is convex for these $N_t^{[h]}$'s. 

In the following, we will examine one iteration of the process at fixed $h$.  To avoid clutter, 
we will again employ the (inconsistent) notation that 
$N_{0}$ is the initial density and $N_{1}$ is the final density for this step.  
Let us define, for $\lambda > 0$
$$
M_{\lambda}^{(0)}  :=  (1 - h\lambda)N_{0}  +  h\lambda \text{\textsc{m}}_{0}
$$
so that
$M_{\lambda}^{(0)} - N_{0} =  h\lambda(\text{\textsc{m}}_{0} - N_{0})$.
Let us also define $Q$ to be the potential which pushes $N_{0}$ all the way to 
$\text{\textsc{m}}_{0}$ in unit time under the approximate dynamics:
$$
\text{\textsc{m}}_{0} - N_{0} =:
\nabla\cdot (N_{0}\nabla Q) - \Omega_{N_{0}}Q.
$$
Further, the approximate distance (all the way) to $\text{\textsc{m}}_{0}$
is given by
$$
\mathbb D_{A}^{2}(N_{0}, \text{\textsc{m}}_{0})  =
\int_{\mathbb T_{L}^{d}} N_{0}|\nabla Q|^{2} + \Omega_{N_{0}}Q^{2}~dx.
$$
It is underscored, informally, that $\mathbb D^{2}_{A}(N_{0}, \text{\textsc{m}}_{0})$ -- and $Q$ --
are of order unity relative to $h$ with $h \ll 1$.  We have (since the relevant equations are linear)
$$
\mathbb D^{2}_{A}(N_{0}, M_{\lambda}^{(0)}) 
=
h^{2}\lambda^{2}\cdot \mathbb D_{A}^{2}(N_{0}, \text{\textsc{m}}_{0}).
$$

We now adjust $\lambda$ so that this distance is exactly the distance which is traveled under the auspices of the JKO type process:
$$
h^{2}\lambda^{2} \cdot \mathbb D_{A}^{2}(N_{0}, \text{\textsc{m}}_{0})
=
\mathbb D_{A}^{2}(N_{0}, N_{1}).
$$
Now since we must have
$\mathbb J_{A}(N_{0}, N_{1}) \leq \mathbb J_{A}(N_{0},M_{\lambda}^{(0)})$, it follows that
$\mathscr G_{\mu}(N_{1}) \leq \mathscr G_{\mu}(M_{\lambda}^{(0)})$.  
Using convexity of $\mathscr G_{\mu}(\cdot)$, we have
$$
\mathscr G_{\mu}(N_{1}) - \mathscr G_{\mu}(\text{\textsc{m}}_{0})
\leq
(1 - h\lambda) \cdot \left[\mathscr G_{\mu}(N_{0}) - \mathscr G_{\mu}(\text{\textsc{m}}_{0})\right].
$$

Thus, if we can get $\lambda$ uniformly bounded below for an indefinite number of iterations of the process, then in the standard (discretization) notation, the above becomes\begin{equation}\label{DISP}
\mathscr G_{\mu}(N^{[h]}_{(k+1)h}) - \mathscr G_{\mu}(\text{\textsc{m}}_{0})
\leq
(1 - h\lambda)\cdot \left[\mathscr G_{\mu}(N^{[h]}_{kh}) - \mathscr G_{\mu}(\text{\textsc{m}}_{0})\right]
\end{equation}
and so in the $h\to 0$ limit,
$\mathscr G_{\mu}(N_{t}) - \mathscr G_{\mu}(\text{\textsc{m}}_{0})
\leq
\text{e}^{-\lambda t}\cdot \left[\mathscr G_{\mu}(N_{0}) - \mathscr G_{\mu}(\text{\textsc{m}}_{0})\right]$. 
We turn our investigations to $\lambda$.  
Let us start with some preliminary estimates on $\mathbb D_{A}^{2}(N_{0}, \text{\textsc{m}}_{0})$.

\noindent \b{\textbf{Claim 1.} We have 
\begin{equation}
\label{PDFD} \mathbb D_A^2(N_0, \textsc{m}_0) \leq g^2 \cdot \|N_0 - \textsc{m}_0\|_{L^2}^2, \end{equation}
where $g^2 := (\kappa \textsc{m}_0)^{-1/2}$. }

\noindent \emph{Proof of Claim.} We start with the identities
\begin{equation}
\label{VYCS}
-\int_{\mathbb T_{L}^{d}}(\text{\textsc{m}}_{0} - N_{0})Q~dx
=
\mathbb D_{A}^{2}(N_{0}, \text{\textsc{m}}_{0})
=
\int_{\mathbb T_{L}^{d}} N_{0}|\nabla Q|^{2} + \Omega_{N_{0}}Q^{2}~dx.
\end{equation}
So, using inequalities on both ends:
\begin{equation}
\label{UYCS}
\|N_{0} -\text{\textsc{m}}_{0}\|_{L^{2}} \cdot \|Q\|_{L^{2}} \geq
\int_{\mathbb T_{L}^{d}}\Omega_{N_{0}}Q^{2}~dx.
\end{equation}
It is now claimed that, pointwise,
\begin{equation}
\label{CQGR}
\Omega_{N}  \geq  N^{\frac{1}{2}}.
\end{equation}
Indeed, \b{this follows from the known inequality $(a-b)/\log(a/b) \geq \sqrt{ab}$,} but in any case (for completeness) we write
$$
\Omega_{N}  =  \frac{N^{\frac{1}{2}}}{\Phi_{N}}
\left(N^{\frac{1}{2}}\text{e}^{-\frac{1}{2}(\mu - w_{N})}
-  \frac{1}{N^{\frac{1}{2}}}\text{e}^{\frac{1}{2}(\mu - w_{N})}\right)
= N^{\frac{1}{2}}
\frac{\sinh \frac{1}{2} \Phi_N}{\frac{1}{2}\Phi_{N}}
\geq 
N^{\frac{1}{2}}.
$$

Thus the bound in 
Eq.~(\ref{UYCS}) may be replaced by
$$
\|N_{0} -\text{\textsc{m}}_{0}\|_{L^{2}} \cdot \|Q\|_{L^{2}} \geq
(\kappa\text{\textsc{m}}_{0})^{\frac{1}{2}} \cdot \|Q\|^{2}_{L^{2}},
$$
i.e., 
$$
\|Q\|_{L^{2}} \leq 
\frac{1}{(\kappa\text{\textsc{m}}_{0})^{\frac{1}{2}}} \cdot \|N_{0} -\text{\textsc{m}}_{0}\|_{L^{2}}.
$$
Putting this back into Eq.~\eqref{VYCS} we acquire
\[
\mathbb D_{A}^{2}(N_{0}, \text{\textsc{m}}_{0})
\leq
\frac{1}{(\kappa\text{\textsc{m}}_{0})^{\frac{1}{2}}} \cdot \|N_{0} -\text{\textsc{m}}_{0}\|^{2}_{L^{2}} = g^{2} \cdot 
\|N_{0} -\text{\textsc{m}}_{0}\|^{2}_{L^{2}}
\]
as stated.   \hspace{1 cm}  $\blacksquare$

From Claim 1 we have 
\begin{equation}
\label{UYTB}
h^2 \lambda^2 \cdot g^2 \|N_{0} -\text{\textsc{m}}_{0}\|^{2}_{L^{2}}\geq
 h^2 \lambda^2 \cdot \mathbb D_A^2 (N_0, \textsc{m}_0)
=\mathbb D^{2}_{A}(N_{0}, M_{\lambda}^{(0)})
=
\mathbb D_{A}^{2}(N_{0}, N_{1}),
\end{equation}
so our goal will be achieved if we can show that
$\mathbb D_{A}^{2}(N_{0}, N_{1})$ is of the same order as 
$h^{2}\|N_{0} - \text{\textsc{m}}_{0}\|^{2}_{L^{2}}$.  
To this end, we will now consider
$$
M_{\theta}^{(1)}  :=
(1 - h\theta)N_{1} + h\theta \text{\textsc{m}}_{0}.
$$
The strategy here is to show that if $\mathbb D_A^2(N_0, N_1)$ were not of the correct order of magnitude (according to the above stated goal) then $M_{\theta}^{(1)}$
would be a better minimizer for 
$\mathbb J_{A}(N_{0}, \cdot)$. 
In what follows, let us use the version of
$\mathbb J_{A}$ in which the current value of the free energy is subtracted off:
$$
\mathbb J_{A}(N_{0}, M_{\theta}^{(1)})  =  
\frac{1}{2}\mathbb D_{A}^2(N_{0}, M_{\theta}^{(1)}) 
+ h\left[\mathscr G_{\mu}(M_{\theta}^{(1)}) - \mathscr G_{\mu}(N_{0})\right].
$$  

We start with an upper bound on $\mathbb D^{2}_{A}(N_{0}, M_{\theta}^{(1)})$.  To this end, it is noted that since 
\[ \b{M_\theta^{(1)} - N_0 = (1-h\theta) (N_1 - N_0) + h\theta (\textsc{m}_0 - N_0)},\]
the driving field which achieves $M_{\theta}^{(1)}$ is given by
\b{$(1 - h\theta)\cdot h\Phi_{N_{1}} + h\theta Q$}.  Therefore 
$\mathbb D^{2}_{A}(N_{0}, M_{\theta}^{(1)})  =
(1 - h\theta)^{2} \cdot \mathbb D^{2}_{A}(N_{0}, N_{1})  + h^{2}\theta^{2} \cdot \mathbb D^{2}_{A}(N_{0}, \text{\textsc{m}}_{0}) + 2h^{2}\theta(1 - h\theta) \cdot
\langle \hspace{-.1cm} \langle  \Phi_{N_{1}}  , Q   \rangle  \hspace{-.1cm}  \rangle_{N_{0}}$.
We will bound the last term by
$2h\theta(1-h\theta) \cdot \mathbb D_{A}(N_{0}, N_{1})\mathbb D_{A}(N_{0}, \text{\textsc{m}}_{0})$: \b{This follows from the Cauchy--Schwarz inequality since e.g., $\langle \hspace{-.1cm} \langle  Q  , Q   \rangle  \hspace{-.1cm}  \rangle_{N_{0}} = \mathbb D_A^2 (N_0, \textsc{m}_0)$}.  (We note that one factor of $h$ has been absorbed into the term $\mathbb D_{A}(N_{0}, N_{1})$.)  I.e., we have the square of the triangle inequality: 
\[\begin{split} \mathbb D_A^2(N_0, M_\theta^{(1)}) \leq (1 - h\theta)^{2} \cdot \mathbb D^{2}_{A}(N_{0}, N_{1})  &+ h^{2}\theta^{2} \cdot \mathbb D^{2}_{A}(N_{0}, \text{\textsc{m}}_{0})\\
&~~~~~~+ 2h\theta(1-h\theta) \cdot \mathbb D_A(N_0, N_1) \mathbb D(N_0, \textsc{m}_0)
\end{split}\]
Meanwhile, by the convexity from Proposition \ref{BBGG},
$$\mathscr G_{\mu}(M_{\theta}^{(1)}) \leq (1-h\theta) \cdot \mathscr G_{\mu}(N_{1}) + h\theta \cdot \mathscr G_{\mu}(\text{\textsc{m}}_{0}).$$  

Putting the previous two displays together and subtracting off $\mathbb J_A(N_0, N_1)$, we have
\[\begin{split}
\mathbb J_{A}(N_{0}, M_{\theta}^{(1)}) &- \mathbb J_{A}(N_{0}, N_1) = \frac{1}{2} \left[\mathbb D_A^2(N_0, M_\theta^{(1)}) - \mathbb D_A^2(N_0, N_1) \right] + h \left[ \mathscr G_\mu(M_\theta^{(1)}) - \mathscr G_\mu(N_1) \right]\\
&\leq h\theta \cdot \mathbb D_{A}(N_{0}, N_{1}) \mathbb D_{A}(N_{0}, \text{\textsc{m}}_{0}) + 
\frac{1}{2} h^{2}\theta^{2} \cdot \left[\mathbb D_{A}(N_{0},\text{\textsc{m}}_0) - \mathbb D_{A}(N_{0}, N_{1})\right]^{2}
\notag
\\
& -h\theta \cdot \left[\mathbb D_{A}^{2}(N_{0}, N_{1}) + h(\mathscr G_{\mu}(N_{1}) - \mathscr G_{\mu}(N_{0}))\right] + h^{2}\theta \cdot \left[\mathscr G_{\mu}(\text{\textsc{m}}_{0}) - \mathscr G_{\mu}(N_{0})\right].  
\end{split}\]
Since $N_1$ is a minimizer, the right hand side is nonnegative.  In particular this is so when we divide by $h\theta$ and take the $\theta \to 0$ limit.  Thus
\begin{equation}\label{WIRKK}
\begin{split}
h[\mathscr G_{\mu}(N_{0}) - \mathscr G_{\mu}(\text{\textsc{m}}_{0})]
&\leq
\mathbb D_{A}(N_{0}, N_{1}) \mathbb D_{A}(N_{0}, \text{\textsc{m}}_{0})\\
 &\hspace{2cm}- \left[\mathbb D_{A}^{2}(N_{0}, N_{1}) + h(\mathscr G_{\mu}(N_{1}) - \mathscr G_{\mu}(N_{0}))\right].
\end{split}
\end{equation}

Next we have the following estimate relating $\mathscr G_\mu(N_0) - \mathscr G_\mu(\textsc{m}_0)$ and $\|N_0 - \textsc{m}_0\|_{L^2}^2$:

\noindent \textbf{Claim 2.} 
Under the conditions on $\text{\textsc{m}}_{0}$ and $\kappa$ in the statement of this theorem, there is a 
$\sigma = \sigma(\text{\textsc{m}}_{0}, \kappa) > 0$ such that
$$
\mathscr G_{\mu}(N_{0}) - \mathscr G_{\mu}(\text{\textsc{m}}_{0}) \geq
\sigma\|N_{0} - \text{\textsc{m}}_{0} \|^{2}_{L^{2}}.
$$
\noindent\textit{Proof of Claim.}  
This, it turns out, is a recapitulation of (the convexity) Proposition \ref{BBGG}.  If we write
$N_{0}  =  \text{\textsc{m}}_{0} + (N_{0} - \text{\textsc{m}}_{0})$ we can expand the free energy in powers of
$N_{0} - \text{\textsc{m}}_{0}$.  The first order term vanishes by stationarity while the interaction piece is exact at the quadratic order.  Now, pointwise, 
\[\begin{split}
(\text{\textsc{m}}_{0} + (N_{0} - \text{\textsc{m}}_{0})) \cdot \log(\text{\textsc{m}}_{0} &+ (N_{0} - \text{\textsc{m}}_{0}))\\
&= \text{\textsc{m}}_{0}\log\text{\textsc{m}}_{0} + \text{linear piece } + 
\frac{1}{2}\cdot \frac{(N_{0} - \text{\textsc{m}}_{0})^{2}}{\nu N_{0} + (1-\nu)\text{\textsc{m}}_{0}}
\end{split}\]
where $\nu \in [0,1]$ depends on the value of $N_{0}(x)$.  Thus we may write
$$
\mathscr G_{\mu}(N_{0}) - \mathscr G_{\mu}(\text{\textsc{m}}_{0})  =
\frac{1}{2}\left[\int_{\mathbb T_{L}^{d}}\frac{R_{0}^{2}(x)~dx}{\nu(x)N_{0} + (1-\nu(x))\text{\textsc{m}}_{0}} 
+ \int_{\mathbb T_{L}^{d}\times \mathbb T_{L}^{d}}
\hspace{-.25 cm}
W(x-y)
R_{0}(x)R_{0}(y)~dxdy\right]
$$
with $R_{0}(x)$ being temporary notation for $N_{0}(x) - \text{\textsc{m}}_{0}$.  The conclusion follows with
$$
\sigma  =  \frac{1}{2}
\left(
\frac{\kappa}{\text{\textsc{m}}_{0}} - \frac{1}{\vartheta^{\sharp}}
\right).
$$  
The stated claim has been established.  ~~~~$\blacksquare$
\begin{remm}
\b{For future reference we note that the estimates in Claim 1 and Claim 2 apply to any density $N \in \mathcal B_\kappa$ and not just $N_0$.  We also note that the constant $g^2 = (\kappa \textsc{m}_0)^{-1/2}$ does not depend on the particulars of $N_0$.}  
\end{remm}
Thus, dropping the $\mathbb D_A^2(N_0, N_1)$ term from Eq.~\eqref{WIRKK} and using Eq.~\eqref{PDFD}, we get
\begin{equation}
\label{XASA}
h\sigma\|N_{0} - \text{\textsc{m}}_{0}\|_{L^{2}}^{2}  
+h(\mathscr G_{\mu}(N_{1}) - \mathscr G_{\mu}(N_{0}))
\leq  \mathbb D_{A}(N_{0}, N_{1})\cdot g\|N_{0} - \text{\textsc{m}}_{0}\|_{L^{2}}.
\end{equation}
Now were it not for the small term $h(\mathscr G_\mu(N_1) - \mathscr G_\mu(N_0))$ on the left, we would obtain
a lower bound of $\frac{h\sigma}{g} \cdot \|N_{0} - \text{\textsc{m}}_{0}\|_{L^{2}}$ for $\mathbb D_{A}(N_{0}, N_{1})$
which by Eq.~\eqref{UYTB} would imply
$$
\lambda  \geq  \frac{\sigma}{g^{2}} := \lambda^{\dagger}.
$$
Since the small free energy difference term will appear at each stage of the iteration and there are of order $h^{-1}$ steps altogether, let us write Eq.~\eqref{XASA} in the form that it would appear without the abbreviations:   
\begin{equation}
\label{XBSB}
h\sigma\|N^{[h]}_{kh} - \text{\textsc{m}}_{0}\|_{L^{2}}^{2}  
+h\left(\mathscr G_{\mu}(N^{[h]}_{(k+1)h}) - \mathscr G_{\mu}(N^{[h]}_{kh})\right)
\leq  g\mathbb D_{A}\left(N^{[h]}_{kh}, N^{[h]}_{(k+1)h}\right)\|N^{[h]}_{kh} - \text{\textsc{m}}_{0}\|_{L^{2}}.
\notag
\end{equation}

Let us stipulate that, necessarily, for all times $t^{\prime} < t$, 
$N_{t^{\prime}} \neq \text{\textsc{m}}_{0}$  (indeed, otherwise there would be nothing to prove).
Thus, it is clear that
$$
\epsilon  :=  \inf_{k,h:  hk \leq t}\|N^{[h]}_{kh} - \text{\textsc{m}}_{0}\|_{L^{2}}^{2}
$$
is strictly positive.  We shall only consider $h$'s which satisfy $h < \epsilon^{2}$ and thus the above generalization of Eq.~\eqref{XASA} 
in combination with Eq.~\eqref{UYTB} yields the estimate
$$
h\lambda_{k+1}  \geq  \frac{\mathbb D_{A}\left(N_{hk}^{[h]},N_{h(k+1)}^{[h]}\right)}{g\|N_{hk}^{[h]} - \text{\textsc{m}}_{0}\|_{L^{2}}}
\geq
\left[\frac{h\sigma}{g^{2}} + \frac{h^{\frac{1}{2}}}{g^{2}}
\left(\mathscr G_{\mu}(N_{h(k+1)}^{[h]}) - \mathscr G_{\mu}(N_{hk}^{[h]})\right)\right].
$$
\b{(In the above we are using that $\mathscr G_{\mu}(N_{h(k+1)}^{[h]}) - \mathscr G_{\mu}(N_{hk}^{[h]}) \leq 0$ which is clear since otherwise $N_{hk}^{[h]}$ would've been a better minimizer for $\mathbb J_A(N_{hk}^{[h]}, \cdot)$ than $N_{h(k+1)}^{[h]}$.)}

Recalling the discussion surrounding the display labeled \eqref{DISP} (and iterating) we now have the estimate 
$$
\mathscr G_{\mu}(N^{[h]}_{(k+1)h}) - \mathscr G_{\mu}(\text{\textsc{m}}_{0})
\leq
\left[\mathscr G_{\mu}(N_0) - \mathscr G_{\mu}(\text{\textsc{m}}_{0})\right] \cdot 
\prod_{j = 1}^{k}(1 - h\lambda_{j}).
$$
We bound the product \b{(recall that $\lambda^\dagger = \sigma/g^2$)} as follows:
\begin{align}
\prod_{j=1}^{k}(1-h\lambda_{j})  &=  (1 - h\lambda^{\dagger})^{k}\cdot\prod_{j=1}^{k}  
\left(1 - \frac{h^{\frac{1}{2}}}{\b{g^2}(1 - h\lambda^{\dagger})}\left(\mathscr G_{\mu}(N_{h(k+1)}^{[h]}) - \mathscr G_{\mu}(N_{hk}^{[h]})\right)\right)
\notag
\\
&\leq
(1 - h\lambda^{\dagger})^{k} \cdot 
\text{Exp}\left[-\frac{h^{\frac{1}{2}}}{\b{g^2}(1 - h\lambda^{\dagger})}
\sum_{j = 1}^{k}\left(\mathscr G_{\mu}(N_{h(k+1)}^{[h]}) - \mathscr G_{\mu}(N_{hk}^{[h]})\right)\right].
\end{align}
The sum in the exponent is just the current free energy drop which may be bounded uniformly in $k$ by the total free energy drop, namely
 $\mathscr G_{\mu}(\text{\textsc{m}}_{0}) - \mathscr G_{\mu}(N_{0})$, and 
the pre--factor of $h^{\frac{1}{2}}$ causes this factor in the exponent to vanish in the $h\to 0$ limit.  Thus, as claimed, when we take $h\to 0$
$$
\mathscr G_{\mu}(N_{t}) - \mathscr G_{\mu}(\text{\textsc{m}}_{0})  \leq 
\left[\mathscr G_{\mu}(N_{0}) - \mathscr G_{\mu}(\text{\textsc{m}}_{0})\right]\cdot\text{e}^{-\lambda^{\dagger}t}.
$$
By the result displayed in Claim 2 (applied to $N_t$ instead of $N_0$) a similar estimate holds for 
$\|N_{t} - \text{\textsc{m}}_{0}\|_{L^{2}}^{2}$.   
\qed

Finally, we claim that in essence, the derivation featured above also holds for the \textit{actual}
$\mathbb D$--distance:
\begin{cor}
\label{corr}
With all notation as before, we have 
\[ \mathbb D^2(N_t, \textsc{m}_0) \leq \frac{g^2}{\sigma}[\mathscr G_\mu(N_0) - \mathscr G_\mu(\textsc{m}_0)] \cdot \text{e}^{-\lambda^\dagger t}.\]
\end{cor}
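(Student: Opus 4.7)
The plan is to establish the analogue of Eq.(\ref{PDFD}) for the true distance, namely
\[
\mathbb D^{2}(N,\text{\textsc{m}}_{0}) \leq g^{2}\,\|N-\text{\textsc{m}}_{0}\|_{L^{2}}^{2},
\]
for every $N \in \mathcal B_{\kappa}$, and then apply it at time $t$ together with the $L^{2}$ bound already proved as the second assertion of the Main Theorem. The desired inequality then follows immediately:
\[
\mathbb D^{2}(N_{t},\text{\textsc{m}}_{0}) \leq g^{2}\|N_{t}-\text{\textsc{m}}_{0}\|_{L^{2}}^{2} \leq \frac{g^{2}}{\sigma}\bigl[\mathscr G_{\mu}(N_{0})-\mathscr G_{\mu}(\text{\textsc{m}}_{0})\bigr]\text{e}^{-\lambda^{\dagger}t}.
\]

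To execute the first step, I would take the straight line path $N_{s}:=(1-s)N_{t}+s\text{\textsc{m}}_{0}$ for $s\in[0,1]$. By convexity, $N_{s}\in\mathcal B_{\kappa}$ for every $s$, so the bounds $\kappa\text{\textsc{m}}_{0} < N_{s} < \kappa^{-1}\text{\textsc{m}}_{0}$ persist along the whole path. At each $s$ one finds, via the Riesz representation argument of Section \ref{JKO}, a gradient field $V_{s}=-\nabla Q_{s}$ satisfying
\[
\text{\textsc{m}}_{0}-N_{t} \;=\; \frac{\partial N_{s}}{\partial s} \;=\; \nabla\cdot(N_{s}\nabla Q_{s}) \;-\; \Omega_{N_{s}}Q_{s},
\]
so $V_{s}\in\mathscr V(N_{t},\text{\textsc{m}}_{0})$. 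Pairing this equation with $Q_{s}$ and integrating gives, exactly as in Eq.(\ref{VYCS}),
\[
-\!\int_{\mathbb T_{L}^{d}}(\text{\textsc{m}}_{0}-N_{t})Q_{s}\,dx
\;=\;
\int_{\mathbb T_{L}^{d}}\bigl[N_{s}|\nabla Q_{s}|^{2}+\Omega_{N_{s}}Q_{s}^{2}\bigr]\,dx
\;=\;\langle\!\langle V_{s},V_{s}\rangle\!\rangle_{N_{s}}.
\]

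The main input is now the pointwise lower bound $\Omega_{N_{s}}\geq N_{s}^{1/2}\geq (\kappa\text{\textsc{m}}_{0})^{1/2}$ from Eq.(\ref{CQGR}), which is uniform in $s$ because every $N_{s}$ is in $\mathcal B_{\kappa}$. Exactly as in the passage from Eq.(\ref{UYCS}) to Eq.(\ref{PDFD}), this yields $\|Q_{s}\|_{L^{2}}\leq(\kappa\text{\textsc{m}}_{0})^{-1/2}\|N_{t}-\text{\textsc{m}}_{0}\|_{L^{2}}$ and hence
\[
\langle\!\langle V_{s},V_{s}\rangle\!\rangle_{N_{s}} \leq g^{2}\|N_{t}-\text{\textsc{m}}_{0}\|_{L^{2}}^{2}
\qquad\text{uniformly in }s.
\]
Integrating in $s\in[0,1]$ and taking the infimum over admissible fields in the definition Eq.(\ref{GBVH}) delivers the sought after estimate on $\mathbb D^{2}(N_{t},\text{\textsc{m}}_{0})$.

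The step I expect to require the most care is the verification that the linear-interpolation field $V_{s}$ genuinely lies in the admissible class $\mathscr V(N_{t},\text{\textsc{m}}_{0})$ -- i.e., that $\partial_{s}N_{s}=\text{\textsc{m}}_{0}-N_{t}$ sits in $\mathcal H_{N_{s}}^{-1}$ for each $s$ so that the Riesz argument produces $Q_{s}\in\mathcal H_{N_{s}}$, and that the resulting $Q_{s}$ is measurable in $s$ well enough to make $s\mapsto\langle\!\langle V_{s},V_{s}\rangle\!\rangle_{N_{s}}$ integrable. Both are handled just as in Section \ref{JKO}, using the $\mathcal B_{\kappa}$ lower bound on $N_{s}$ and $\Omega_{N_{s}}$; the rest of the argument is an entirely mechanical reprise of Eqs.(\ref{VYCS})--(\ref{PDFD}).
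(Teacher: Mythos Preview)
Your proposal is correct and follows essentially the same route as the paper: you take the straight-line interpolation $N_s=(1-s)N_t+s\text{\textsc{m}}_0$, produce the driving potential $Q_s$ by Riesz representation, and then rerun the chain Eqs.(\ref{VYCS})--(\ref{PDFD}) using the uniform bound $\Omega_{N_s}\geq N_s^{1/2}\geq(\kappa\text{\textsc{m}}_0)^{1/2}$ along the path. The only cosmetic difference is that you carry out the Cauchy--Schwarz estimate pointwise in $s$ and then integrate, whereas the paper integrates in $s$ first and applies Cauchy--Schwarz to the $s$-integral; both yield the same $g^2\|N_t-\text{\textsc{m}}_0\|_{L^2}^2$ bound, and your final combination with the $L^2$ estimate of the Main Theorem is exactly what the paper does via the Claim.
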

\begin{proof}
Let $N \in \mathcal B_{\kappa}$ and consider
$$
N_{s}^{\bullet}  =  (1 - s)N + s\text{\textsc{m}}_{0}.
$$
Let $Q_{s}^{\bullet}$ denote the corresponding advective potential
$$
\frac{\partial N_{s}^{\bullet}}{\partial s}  \equiv \text{\textsc{m}}_{0} - N
=
\nabla\cdot(N_{s}^{\bullet}\nabla Q_{s}^{\bullet}) - \Omega_{ N_{s}^{\bullet}}Q_{s}^{\bullet}.
$$
(Clearly, $Q_{s}^{\bullet}$ depends on $s$.)  Now going this route from 
$N\to \text{\textsc{m}}_{0}$ will not necessarily minimize the actual distance functional:
$$
\mathbb D^{2}(N, \text{\textsc{m}}_{0})  \leq  \int_{0}^{1} 
\langle \hspace{-.1cm} \langle  \nabla Q_{s}^{\bullet}  ,
  \nabla Q_{s}^{\bullet}  \rangle  \hspace{-.1cm}  \rangle_{N_{s}^{\bullet}} ~ds.
$$
Therefore an upper bound on the integrated inner product constitutes an upper bound on the actual distance.  

\b{To this end, noting that $N_s^\bullet \in \mathcal B_\kappa$, similar reasoning as in the proof of Claim 1 yields}
$\|Q_s^\bullet\|_{L^2} \leq g^2 \cdot 
\|N - \text{\textsc{m}}_{0}\|_{L^{2}}$.
On the other hand, 
\[\|N - \textsc{m}_0\|_{L^2} \cdot \|Q_s^\bullet\|_{L^2} \geq  -\int_{0}^{1}ds\int_{\mathbb T_{L}^{d}}(N - \text{\textsc{m}}_{0})Q_{s}^{\bullet}~dx
= \int_{0}^{1}\langle \hspace{-.1cm} \langle  \nabla Q_{s}^{\bullet}  ,
  \nabla Q_{s}^{\bullet}  \rangle  \hspace{-.1cm}  \rangle_{N_{s}^{\bullet}}~ds.\]
Combining the above estimates, we thus obtain an analogous conclusion to Claim 1:
\[ \mathbb D^2(N, \textsc{m}_0) \leq g^2 \cdot \|N - \textsc{m}_0\|_{L^2}^2.\]
By Claim 2, $g^2 \cdot \|N- \textsc{m}_0\|_{L^2}^2 \leq \frac{g^2}{\sigma} \cdot \left[\mathscr G_\mu(N) - \mathscr G_\mu (\textsc{m}_0)\right]$.
Thence we may conclude using iteration as in the proof of Theorem \ref{MDKR} that 
$$\mathbb D^{2}(N_{t}, \text{\textsc{m}}_{0})  \leq  
\frac{g^{2}}{\sigma}\left[\mathscr G_{\mu}(N_{0}) - \mathscr G_{\mu}(\text{\textsc{m}}_{0})\right] \cdot \text{e}^{-\lambda^{\dagger} t}.$$
\end{proof}

\noindent \emph{Proof of Theorem \ref{thmm}.}  The establishment of $\mathbb D(\cdot, \cdot)$ as a \emph{bona fide} distance is found in Appendix B and the convergence of the JKO type scheme is the content of Theorem \ref{FVJJ}.  Finally, Corollary \ref{corr} establishes the stated convergence for the distance $\mathbb D(\cdot, \cdot)$.  

\section{Appendix A}
In this appendix, we analyze the discrete time evolution of the fluid density as given in Eq.~\eqref{CVDA}.  
While this equation produces $N_{(k+1)h}$ from $N_{kh}$, in order to avoid clutter, we will set $k = 0$ -- and introduce various other abbreviations to be described shortly.  The ultimate result depends only on properties of $N_{k}$
(\textsc{aka} $N_{0}$) primarily the $\mathcal D_{2}$--norm (a Fourier norm) introduced before and again described below.  Thus, the principal difficulty will be to show that the relevant properties are preserved under iteration.  And, it turns out, it is too much to expect that this is achieved by having the incremental changes in e.g., $N_{0}$, $\nabla^{2}N_{0}$ etc., to always be of order $h$.  Thus a somewhat delicate (albeit presumably standard) ``cancelation'' must be exhibited in the course of our arguments.

\subsection{The Full Equation}  
Equation (\ref{CVDA}) is most conveniently expressed in terms of the variable $\Psi := \log N$.
For the purposes of this appendix, we will abbreviate 
$\Psi_{0} := \log N_{0}$
and
$w_{0} := W\ast{N_{0}}$
with similar notational conventions when 0--subscripts are replaced by 1's.  
In this language, Eq.\eqref{CVDA} reads
\begin{equation}
\label{XMVQ}
\begin{split}
\text{e}^{\Psi_{1} - \Psi_{0}} - 1 =
h[\nabla^{2}\Psi_{1} + \nabla^{2} w_{1} + \nabla \Psi_{1}\cdot \nabla \Psi_{0} &+ \nabla w_{1}\cdot \nabla \Psi_{0}]\\
&\b{-h[\text{e}^{-\Psi_{0}}\Omega_{N_{0}}(\Psi_{1} + w_{1} - \mu)]}.
\end{split}
\end{equation}
Introducing $h\psi := \Psi_{1} - \Psi_{0}$, \b{$hw_{\psi} := w_{1} - w_{0} = W\ast (\text{e}^{\Psi_{0}})(\text{e}^{h\psi} -1)$ and $\Omega_{0}:= \text{e}^{-\Psi_{0}}\Omega_{N_{0}}$}, Eq.~\eqref{XMVQ} now reads
\begin{align}
\label{XPOR}
\frac{\text{e}^{h\psi} - 1}{h}  =   
& h\nabla^{2}\psi +
[\nabla^{2}\Psi_{0} + |\nabla \Psi_{0}|^{2} + \nabla^{2}w_{0} + 
\nabla w_{0}\cdot \nabla\Psi_{0} - \Omega_{0}(\Psi_{0}+ w_{0} \b{- \mu})]
\notag
\\
+ &h[\nabla^{2}w_{\psi} + \nabla w_{\psi}\cdot \nabla \Psi_{0} + \nabla \psi\cdot \nabla \Psi_{0}
\b{- \Omega_0(\psi + w_\psi)}].
\end{align}
The advantage of using the $\Psi$--variables is now manifest: \b{On the right hand side of the equation, all the non--linearities are encoded into the function itself and do not involve the derivatives.  Note further that we have separated the $\Psi_0$--terms from the $\psi$--terms.}

\subsection{Norms}
\label{PLWF}
Our analyses will be essentially classical -- although it is conceivable that with greater effort, a more general treatment would be possible.  In any case we will start with an assumption on 
$\Psi_{0}$ which is slightly stronger than $H^1$. Specifically we will require that $\Psi_{0} \in \mathcal D_{2}$ as described below:

Let $f: \mathbb T_{L}^{d} \to \mathbb R$ have Fourier coefficients $\hat{f}(k)$.  Then 
$$
\|f\|_{\mathcal D_{0}} := 
\frac{1}{L^{d}}\sum_{k} |\hat{f}(k)|
$$
and, if this is finite, then we say $f\in \mathcal D_{0}$.  In general, 
$$
\|f\|_{\mathcal D_{m}} := 
\frac{1}{L^{d}}\sum_{k} |k|^{m}|\hat{f}(k)|
$$
defines the class $\mathcal D_{m}$.  It is noted that these norms obey the usual inequalities, e.g.,
$\|f\|_{\mathcal D_{1}}^{2} \leq \|f\|_{\mathcal D_{2}}\|f\|_{\mathcal D_{0}}$.  These norms also have derivation properties, e.g.,
$$\|fg\|_{\mathcal D_{1}} \leq  \|f\|_{\b{\mathcal D_{1}}}\|g\|_{\mathcal D_{0}} + \|f\|_{\b{\mathcal D_{0}}}\|g\|_{\mathcal D_{1}}.$$

Our precise assumption is that $\Psi_{0} \in \mathcal D_{2}$ with a bound on the norm that does not depend on 
$h$.  The latter is emphasized because, e.g., for the time interval $[0,T]$, we must accommodate the order of 
$Th^{-1}$ iterations of Eq.~\eqref{CVDA}.  Of course a single application is readily accomplished with the result
$\Psi_{1} \sim \Psi_{0} + h\cdot \left[\nabla^{2}\Psi_{0} + |\nabla\Psi_{0}|^{2} + \dots  - \Omega_0w_{0}\right]$.  But this perturbative result, in and of itself, cannot be expected to get us through too many iterations.  For us, among other small matters, the crucial requirement is to show that the actual $\Psi_{k}$'s also have $\mathcal D_{2}$--norms which, \b{for fixed $T$}, is uniformly bounded independent of $h$ (provided that $h$ is sufficiently small) in order that the above heuristic can be continued.  

The above notions will be placed on a more formal footing.  Let us amalgamate into a set \b{$\mathfrak D$ all the relevant input constants, so  the initial $\mathfrak D$ takes the form:}
$$
\b{\mathfrak D_0}  =  
\left\{
\|\Psi_{0}\|_{\mathcal D_{0}},  \|\Psi_{0}\|_{\mathcal D_{2}},
v_{0}, \dots, v_{4}, \|W\|_{\b{\mathcal D_2}}
\right\}
$$ 
where the $v_{m}$ are given by $v_{m}  := \sup_{k} |\hat{W}(k)\|k|^{m}$ and are assumed to be 
finite for $m \leq 4$.  
These are regarded as \textit{fixed} while the time step parameter is to be treated as a variable, albeit ``small''.
In the course of our analysis, various numbers will emerge which will depend on \b{$\mathfrak D_0$ but are uniformly bounded with respect to $h$.}  Then, these numbers are bounded provided the elements of \b{$\mathfrak D_0$} are bounded.  The time--step $h$ itself will be allowed to take on any value smaller than some $h_{0}$ which ultimately \textit{does}
depend on \b{the initial $\mathfrak D_0$}.  But, again, $h_{0}$ will be bounded (below) provided the elements of \b{$\mathfrak D_0$} are bounded (above).  These numbers provide us with the updated version of 
$\mathfrak D$, \b{denoted $\mathfrak D_1$,} which will also have elements which
have only incremented by the order of $h$. \b{We will continue this way to $\mathfrak D_2, \mathfrak D_3$, etc., all of which}, at least for a while,
may be regarded as 
bounded independently of $h$.
Thence, the whole process can be continued throughout some finite interval $[0,T]$, \b{leading to a set $\mathfrak D_\ell$ for each time step $\ell$ so that each element in $\mathfrak D_\ell$ is uniformly bounded.} \b{This way we have} the order of $h^{-1}T$ iterations, with bounds that will depend only on the initial \b{$\mathfrak D_0$} and, perhaps, $T$.

Of course only two of the elements of $\mathfrak D$ are destined to change; later these will be referred to as the 
\textit{mutable} elements.  Anticipated but conspicuously absent from the mutable elements of $\mathfrak D$ is the quantity 
$\|\Psi_{0}\|_{\mathcal D_{1}}$.  The reason is economical rather than esoteric:  Below begins the 
$\mathcal D_{0}$--analysis followed in Subsection \ref{OKIJ} by the $\mathcal D_{2}$--analysis which is still more substantial.  In principal, a $\mathcal D_{1}$ subsection could have been written which, presumably, would have been intermediate.  In practice, we are (at first \b{only}) interested in bounds which permit iteration of the process for \emph{some} positive macroscopic time.  Therefore it proves to be sufficient, even if less efficient, to use
$\|\Psi_0\|_{\mathcal D_{2}}^{\frac{1}{2}}\|\Psi_0\|_{\mathcal D_{0}}^{\frac{1}{2}}$ as an upper bound for 
$\|\Psi_{0}\|_{\mathcal D_{1}}$ in the places where such a bound on this quantity is required.

\subsection{Preliminary Analysis}
We start off with a bound on the $\mathcal D_{0}$--norm of $\psi$:
\begin{proposition}
\label{XTY}
There exist $h_{2} > 0, \mathfrak b_0 > 0$ such that for all $h \leq h_{2}$, \b{there is a solution $\psi$ to Eq.~\eqref{XMVQ}} with $\|\psi\|_{\mathcal D_{0}} \leq \mathfrak b_{0}$.  Further, both
$\mathfrak b_{0}$ and $h_{2}$ depend only on $\mathfrak D_0$ and are uniformly bounded for bounded ranges of these elements. \end{proposition}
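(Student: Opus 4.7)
The plan is to work in Fourier space on equation \eqref{XPOR} and to derive a simultaneous a priori bound on $\|\psi\|_{\mathcal D_0}$ and $h\|\psi\|_{\mathcal D_2}$ via a bootstrap.  Writing $e^{h\psi}-1 = h\psi + \mathcal E_2(h\psi)$ with $\mathcal E_2(x) := e^x - 1 - x$ and moving the term $h\nabla^2\psi$ to the left, equation \eqref{XPOR} takes the form
\[ (I - h\Delta)\psi \;=\; F_0 \;+\; h\,R(\psi) \;-\; h^{-1}\mathcal E_2(h\psi), \]
where the \emph{data} piece $F_0 := \nabla^2\Psi_0 + |\nabla\Psi_0|^2 + \nabla^2 w_0 + \nabla w_0\cdot\nabla\Psi_0 - (\Psi_0 + w_0)\Omega_0$ depends only on $\Psi_0$, and the \emph{perturbation} $R(\psi)$ collects the remaining terms involving $\psi$ and $w_\psi$.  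Since the Fourier multiplier of $I - h\Delta$ is $1 + h|k|^2 > 0$, summing the absolute values of its Fourier coefficients gives the equality $\|(I-h\Delta)\psi\|_{\mathcal D_0} = \|\psi\|_{\mathcal D_0} + h\|\psi\|_{\mathcal D_2}$, so the $\mathcal D_0$ triangle inequality yields the master estimate
\[ \|\psi\|_{\mathcal D_0} + h\|\psi\|_{\mathcal D_2} \;\leq\; \|F_0\|_{\mathcal D_0} \;+\; h\|R(\psi)\|_{\mathcal D_0} \;+\; h^{-1}\|\mathcal E_2(h\psi)\|_{\mathcal D_0}. \]

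Next I would show $\|F_0\|_{\mathcal D_0} \leq C_0(\mathfrak D)$ purely in terms of $\mathfrak D$.  This uses the algebra property $\|fg\|_{\mathcal D_0} \leq \|f\|_{\mathcal D_0}\|g\|_{\mathcal D_0}$, the interpolation $\|\Psi_0\|_{\mathcal D_1} \leq \|\Psi_0\|_{\mathcal D_0}^{1/2}\|\Psi_0\|_{\mathcal D_2}^{1/2}$, and the convolution bound $\|\nabla^m w_0\|_{\mathcal D_0} \leq v_m\|N_0\|_{\mathcal D_0}$, combined with $\|N_0\|_{\mathcal D_0} = \|e^{\Psi_0}\|_{\mathcal D_0} \leq \exp(\|\Psi_0\|_{\mathcal D_0})$ (iterating the algebra property on the Taylor series).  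The most delicate factor, $\Omega_0 = e^{-\Psi_0/2}\,\mathrm{sinhc}\bigl((\Psi_0 + w_0 - \mu)/2\bigr)$ as read off from \eqref{CQGR}, is an entire function of $\Psi_0$ and $w_0$, so its $\mathcal D_0$ norm is controlled by the same methods.

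I would then bootstrap: assume provisionally $\|\psi\|_{\mathcal D_0} + h\|\psi\|_{\mathcal D_2} \leq M$ with $M := 2C_0(\mathfrak D)$.  Each term of $R(\psi)$ — for instance $\psi\Omega_0$, $w_\psi\Omega_0$, $\nabla w_\psi\cdot\nabla\Psi_0$, or $\nabla^2 w_\psi$, where $w_\psi = h^{-1}W*\bigl[N_0(e^{h\psi}-1)\bigr]$ — is bounded via the same algebra, convolution, and Taylor-series estimates by $C_1(\mathfrak D,M)\,\|\psi\|_{\mathcal D_0}$ plus a term bounded by $h$ times a polynomial in $M$.  The exponential error is estimated directly: $h^{-1}\|\mathcal E_2(h\psi)\|_{\mathcal D_0} \leq h^{-1}\sum_{n\geq 2}(h\|\psi\|_{\mathcal D_0})^n/n! \leq \tfrac12 h M^2 e^{hM}$.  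The main obstacle is the cross term $\nabla\psi\cdot\nabla\Psi_0$, whose estimate involves $\|\psi\|_{\mathcal D_1}$, a quantity not directly controlled by the left-hand side of the master estimate; this is handled by Young/AM--GM,
\[ h\|\psi\|_{\mathcal D_1} \leq h\bigl(\|\psi\|_{\mathcal D_0}\|\psi\|_{\mathcal D_2}\bigr)^{1/2} \leq \tfrac{\sqrt h}{2}\bigl(\|\psi\|_{\mathcal D_0} + h\|\psi\|_{\mathcal D_2}\bigr),\]
which can be absorbed into the left-hand side.  Thus for $h \leq h_2$ sufficiently small (depending only on $\mathfrak D$ via $M$), the right-hand side of the master estimate is at most $C_0(\mathfrak D) + \tfrac12\bigl(\|\psi\|_{\mathcal D_0} + h\|\psi\|_{\mathcal D_2}\bigr)$, closing the bootstrap and yielding the stated bound with $\mathfrak b_0 := 2C_0(\mathfrak D)$.
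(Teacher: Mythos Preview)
Your estimate strategy is essentially the paper's: rewrite \eqref{XPOR} with $(I-h\Delta)\psi$ on the left, bound the data term $F_0$ purely in terms of $\mathfrak D$ using the $\mathcal D_0$--algebra property, and pick up the crucial $h^{1/2}$ on the dangerous cross term $h\nabla\psi\cdot\nabla\Psi_0$.  Your bookkeeping differs slightly from the paper's: you keep $\|\psi\|_{\mathcal D_0}+h\|\psi\|_{\mathcal D_2}$ together (via the identity $\|(I-h\Delta)\psi\|_{\mathcal D_0}=\|\psi\|_{\mathcal D_0}+h\|\psi\|_{\mathcal D_2}$) and then interpolate $\|\psi\|_{\mathcal D_1}$, whereas the paper inverts $\underline{L}_h$, uses the pointwise multiplier bound $h|k|/(1+hk^2)\le \tfrac12 h^{1/2}$, and never lets $\|\psi\|_{\mathcal D_2}$ appear.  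Either route produces an inequality of the shape
\[
Q \;\le\; C_0(\mathfrak D) \;+\; b_0\,h^{1/2}Q \;+\; h\,G(h,Q),
\]
with $Q$ your combined norm (or $\|\psi\|_{\mathcal D_0}$ alone in the paper) and $G$ growing at least like $Q^2$ from $\mathcal E_2$ and the $e^{h\psi}-1$ inside $w_\psi$.

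The genuine gap is in your ``bootstrap''.  An inequality of the above form is \emph{consistent} with two regimes for $Q$: the desired small branch $Q\lesssim C_0$, and a large (possibly infinite) branch where the superlinear term $hG(h,Q)$ dominates.  Simply assuming ``provisionally'' that $Q\le M=2C_0$ and then deriving $Q\le M$ is circular; it does not exclude the large branch, and you have given no continuity/homotopy parameter along which to run a standard open--and--closed argument.  The paper confronts exactly this point (``there is also the possible interpretation of a trivial (i.e., infinite) bound; an issue we now address'') and resolves it \emph{constructively}: starting from $\psi_0=\underline{L}_h^{-1}A$ one defines Picard iterates $\psi_{j+1}$ by applying the right--hand side to $\psi_j$, shows $\|\psi_j\|_{\mathcal D_0}\le\zeta_j\uparrow\zeta_\sharp(h)$ for the stable fixed point $\zeta_\sharp$ of the scalar recursion $\zeta_{k+1}=\Xi(h,\zeta_k)$, and then proves the $\psi_j$ are Cauchy in $\mathcal D_0$ (the increment map contracts with factor $O(h^{1/2})$).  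This simultaneously \emph{existentiates} a $\psi$ satisfying \eqref{XD3E} and places it on the small branch.  Your argument also tacitly requires $\|\psi\|_{\mathcal D_2}<\infty$ just to write the master estimate, which is not available a~priori from the variational side (Proposition~\ref{DSIC} only gives $\Phi_{N_h}\in\mathcal H_{N_0}$, i.e.\ $H^1$--type control); the paper avoids this by never invoking $\|\psi\|_{\mathcal D_2}$ at this stage.  To repair your proof, either carry out the Picard construction as the paper does, or supply an explicit continuity parameter (e.g.\ scaling the inhomogeneity $F_0\mapsto \lambda F_0$, $\lambda\in[0,1]$) together with existence and norm--continuity of solutions along it.
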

\begin{proof}
We start with a rewrite of Eq.~\eqref{XPOR} so that it takes the form
\begin{equation}
\label{XD3W}
\psi - h\nabla^{2}\b{\psi}  =
A + hB_{\psi} - \frac{1}{h}\mathcal E_{2}(h\psi)
\end{equation}
where in the above
$\mathcal E_{2}(x) = \sum_{m\geq2}\frac{x^{m}}{m!}$ (and, for future reference, similarly for $\mathcal E_{1}$)
and $A$ and $B_{\psi}$ correspond to the appropriate bracketed terms in the above mentioned equation:
\[\b{\begin{split}A &= \nabla^2 \Psi_0 + |\nabla \Psi_0|^2 + \nabla^2 w_0 + \nabla w_0 \cdot \nabla \Psi_0 - \Omega_0 (\Psi_0 + w_0 - \mu)\\
B_\psi &= \nabla^2 w_\psi + \nabla w_\psi \cdot \nabla \Psi_0 + \nabla \psi \cdot \nabla \Psi_0 - \Omega_0(\psi + w_\psi). \end{split}}\]
Thus we may write
\begin{equation}
\label{XD3E}
\psi  =  L_{h}^{-1}\left[A + hB_{\psi} - \frac{1}{h}\mathcal E_{2}(h\psi)\right] := \mathscr L_h(\psi)
\end{equation}
where $L_{h} := 1 - h\nabla^{2}$.  We estimate the terms one at a time adding all the results.

Most terms are handled easily with the neglect of $L_{h}^{-1}$.  E.g.,
$$
\|L_{h}^{-1}(\nabla^{2}\Psi_{0})\|_{\mathcal D_{0}}
=
\frac{1}{L^{d}}\sum_{k}\frac{1}{1 + hk^{2}}\cdot k^{2}|\hat{\Psi}_{0}| \leq 
\frac{1}{L^{d}}\sum_{k} k^{2}|\hat{\Psi}_{0}|
=
\|\Psi_{0}\|_{\mathcal D_{2}}.
$$
\textcolor{blue}{(We note here that strictly speaking since $k$ is a vector, we should write $|k|^2$ in the above display, but we have suppressed these absolute values and will continue to do so when the context makes the meaning clear.)}
As a further illustration we have
$$\|L_{h}^{-1}\nabla^{2}w_{0}\|_{\mathcal D_{0}}
\leq
\|w_{0}\|_{\mathcal D_{2}} \leq v_{2}\|\text{e}^{\Psi_{0}}\|_{\mathcal D_{0}} 
\leq v_{2}\text{e}^{\|\Psi_{0}\|_{\mathcal D_{0}}}.$$  
All terms in $A$ can be handled this way.  Since the quantities stemming from the $A$ term are bounded by a function of elements of $\mathfrak D_0$, we have the same statement for $L_h^{-1} (A)$ and so we may write 
\textcolor{blue}{$\|L_h^{-1}(A)\|_{\mathcal D_0} \leq A_0$}
with
\begin{equation}\label{DKRI} 
\begin{split}A_0 = \|\Psi_0\|_{\mathcal D_2} + \|\Psi_0\|_{\mathcal D_1}^2 + v_2 e^{\|\Psi_0\|_{\mathcal D_0}} &+ 
\textcolor{blue}{v_1\text{e}^{\|\Psi_0\|_{\mathcal D_0}} \cdot \|\Psi_0\|_{\mathcal D_1}}\\
&+ 
\textcolor{blue}{ V_0}
  \cdot (\|\Psi_0\|_{\mathcal D_0} + 
\textcolor{blue}{v_0\text{e}^{\|\Psi_0\|_{\mathcal D_0}}}
  + \mu).
  \end{split}
\end{equation}
\textcolor{blue}{In the above, $V_0$ is a bound on $\|\Omega_0\|_{\mathcal D_0}$ in terms of the elements of $\mathfrak D_0$ which we shall not make explicit.  In any case, all terms have been entirely bounded in terms of quantities from $\mathfrak D_0$.}

The $B_{\psi}$--terms as well as the final term now involve $\psi$ itself.  Nevertheless, most of \b{these terms are estimated in a straightforward fashion.}  E.g., 
$$\b{\frac{1}{h}\|L_{h}^{-1}\mathcal E_{2}(h\psi)\|_{\mathcal D_{0}}
\leq \frac{1}{h}\mathcal E_{2}(h\|\psi\|_{\b{\mathcal D_0}})}$$ and similarly for most of the other $B_{\psi}$--terms. \b{For the $\nabla \psi \cdot \nabla \Psi_0$ term, in order to ensure that no $\|\psi\|_{\mathcal D_1}$ appears in the estimate, we use} 
\begin{align}
\label{BBBB}
\|hL_{h}^{-1}(\nabla \psi\cdot \nabla \Psi_{0})\|_{\mathcal D_{0}} 
& \leq
\frac{1}{L^{2d}}\sum_{k,q}
\left |
\frac{h}{1 + hk^{2}} \left(q \hat{\Psi}_{0}(q)\right) \cdot \left((k-q)\hat{\psi}(k-q) \right)
\right |
\notag
\\
& \hspace{-1cm}\leq h\|\psi\|_{\mathcal D_{0}}\|\Psi_{0}\|_{\mathcal D_{2}} +
 \frac{1}{L^{2d}}\sum_{k,q}\frac{h|k|}{1 + hk^{2}} \cdot |q \Psi_{0}(q)| \cdot |\hat{\psi}(k-q)|
 \\
 &\hspace{-1cm}\b{\leq \|\psi\|_{\mathcal D_0} \cdot \left(h\|\Psi_0\|_{\mathcal D_0} + \frac{1}{2} h^{1/2} \|\Psi_0\|_{\mathcal D_1}\right),}
 \notag
\end{align}
\b{where to handle the final term in Eq.~\eqref{BBBB} above, we have used $\frac{h|k|}{1 + hk^{2}} \leq \frac{1}{2}h^{\frac{1}{2}}$.
}

\b{We list bounds on the remaining $B_\psi$--terms below:}
\[\begin{split} \|hL_h^{-1} (\nabla^2 w_\psi)\|_{\mathcal D_0} &\leq \|w_\psi\|_{\mathcal D_2} \\
\|hL_h^{-1} (\nabla w_\psi \cdot \nabla \Psi_0)\|_{\mathcal D_0} &\leq \|w_\psi\|_{\mathcal D_0} \cdot \left(h\|\Psi_0\|_{\mathcal D_0} + \frac{1}{2} h^{1/2} \|\Psi_0\|_{\mathcal D_1}\right)\\
\|hL_h^{-1} (\Omega_0 \psi + w_\psi)) \|_{\mathcal D_0} &\leq h V_0 \cdot \left(\|\psi\|_{\mathcal D_0} + \|w_\psi\|_{\mathcal D_0}\right)
 \end{split}\]
and, as before, we may write final estimates for the $w_\psi$--terms:
\[ \|w_\psi\|_{\mathcal D_0} \leq v_0 e^{\|\Psi_0\|_{\mathcal D_0}} \cdot \frac{1}{h} \mathcal E_1(h\|\psi\|_{\mathcal D_0}), ~~~\|w_\psi\|_{\mathcal D_2} \leq v_2 e^{\|\Psi_0\|_{\mathcal D_0}}\cdot \frac{1}{h} \mathcal E_1(h\|\psi\|_{\mathcal D_0}).\] 

\b{Sorting all these terms, the ``bound'' now takes the form }
\begin{equation}
\label{MSGD}
\begin{split}
\|\psi\|_{\mathcal D_{0}} = \|\mathscr L_h(\psi)\|_{\mathcal D_0}
\leq A_0 + h\beta_{0}\|\psi\|_{\mathcal D_{0}}
&+ b_{0}h^{\frac{1}{2}}\|\psi\|_{\mathcal D_{0}} \\
&+ hG(\|\psi\|_{\mathcal D_{0}})
+ h^{1/2} g(\|\psi\|_{\mathcal D_0})
\end{split}
\end{equation}
where 
\[ \beta_0 = \|\Psi_0\|_{\mathcal D_2} + V_0,~~~b_0 = \frac{1}{2} \|\Psi_0\|_{\mathcal D_1}, \]
\begin{equation}\label{BLIH1}\begin{split} \b{G(\|\psi\|_{\mathcal D_0}) = v_0 e^{\|\Psi_0\|_{\mathcal D_0}}(\|\Psi_0\|_{\mathcal D_0}} &\b{+ V_0) \cdot \frac{1}{h} \mathcal E_1(h\|\psi\|_{\mathcal D_0}) } \\
&\b{+ v_2 e^{\|\Psi_0\|_{\mathcal D_0}} \cdot\frac{1}{h}  \mathcal E_1(h \|\psi\|_{\mathcal D_0}) + \frac{1}{h^2}  \mathcal E_2(h\|\psi\|_{\mathcal D_0})} \end{split}\end{equation}
and 
\begin{equation}\label{BLIH2} \b{g(\|\psi\|_{\mathcal D_0}) = \frac{1}{2}  v_0 e^{\|\Psi_0\|_{\mathcal D_0}} \|\Psi_0\|_{\mathcal D_1} \cdot\frac{1}{h}  \mathcal E_1(h\|\psi\|_{\mathcal D_0}).}\end{equation}

All constants and functions in the estimate depend (uniformly) only on the parameters in $\mathfrak D_0$ 
and the quantities $G$ and $g$ tend down to other such constants as $h\to0$: 
\[\b{ \frac{1}{h} \mathcal E_1(h\|\psi\|_{\mathcal D_0}) \longrightarrow \|\psi\|_{\mathcal D_0},~~~ \frac{1}{h^2} \mathcal E_2(h\|\psi\|_{\mathcal D_0}) \longrightarrow \frac{1}{2} \|\psi\|_{\mathcal D_0}.}\]
Importantly, the quantity $\|\psi\|_{\mathcal D_{1}}$ does \textit{not} appear in the estimate.
Thus, we may tentatively 
conclude that $\|\psi\|_{\mathcal D_{0}} \lesssim A_{0}$.  However, it is noted that given the form of the right hand side of the display in Eq.~\eqref{MSGD} there is also the possible interpretation of a trivial (i.e., infinite) bound, an issue we now address.

Let us denote the upper bound on $\|\psi\|_{\mathcal D_0}$ from Eq.~\eqref{MSGD} by $\Xi_h(\|\psi\|_{\mathcal D_{0}})$ and let us examine the corresponding recursive equation 
\begin{equation}
\label{YZBQ}
\zeta_{0}  =  \Xi_h(0) = A_0,~~~\zeta_{k+1}  =  \Xi_h(\zeta_{k}).
\end{equation}
\b{If the iterates were to approach a fixed point at $x$, we would have }
\[ \b{x = \Xi_h(x) = A_0 + h^{1/2} \left[b_0 x + g(x)\right] + h\left[\beta x + G(x)\right].}\]
\b{Clearly, for $h =0$, the equation above is satisfied at $x = A_0$. 
For $h > 0$, starting from $x=0$, the right hand side would still exceed the left hand side till $x = A_0$ and certainly the right hand side would dominate for very large values of $x$} (indeed, the function $\Xi(h, x)$ is increasing and convex in $x$ because $G(x)$ and $g(x)$ are both convex). 

However, we claim that for $h$ sufficiently small, the two functions are guaranteed to cross at some point after $A_0$:

\noindent\b{\textbf{Claim.} For any $\eta > 0$, there is some $h_\eta > 0$ such that for all $h < h_\eta$, there is some $x_h < A_0 + \eta$ such that $\Xi_h(x_h) = x_h$.}

\noindent\b{\emph{Proof of Claim.} Observing that $\Xi_h'(x)$ is increasing it follows that for every $\eta > 0$, 
\[ \Xi_h(A_0 + \eta) \leq A_0 + \Xi_h'(A_0 + \eta) \cdot (A_0 + \eta). \]
Thus if we choose $h_\eta > 0$ sufficiently small so that $\Xi_{h_\eta}'(A_0 + \eta) \cdot (A_0 + \eta) < \eta$, then $\Xi_{h_\eta}(A_0 + \eta) < A_0 + \eta$ (we note also that $\Xi'_h(x)$ is monotonically \emph{decreasing} in $h$ so once some $h_\eta$ is found we have the result for all $h < h_\eta$).  Since $\Xi_h(A_0) > A_0$, the required fixed point exists by continuity of $x - \Xi_h(x)$. }
\qed 
\\
It follows \b{from the convexity of $\Xi_h$} and from the claim that there is some $h_{1} > 0$ such that for $h < h_{1}$ there is a $\zeta_{\sharp} = \zeta_{\sharp}(h)$
which is the unique stable fixed point of Eq.~\eqref{YZBQ} so that if 
$\zeta_{k} < \zeta_{\sharp}$ then 
$\zeta_{k} < \zeta_{k+1} < \zeta_{\sharp}$ and $\lim_{k\to\infty} \zeta_{k}  =  \zeta_{\sharp}$ \b{-- it is clear that $\zeta_k < \zeta_{k+1} = \Xi_h(\zeta_k)$ since for $x < \zeta_\sharp$ we have $\Xi_h(x) > x$; on the other hand, by monotonicity of $\Xi_h$, $\zeta_{k+1} = \Xi_h(\zeta_k) < \Xi(\zeta_\sharp) = \zeta_\sharp$}.  

Recall from Eq.~\eqref{XD3E} that we have $\psi = \mathscr L_h(\psi)$.  Thus, we may define the iterates 
\[\psi_0 = L_h^{-1}(A_0),~~~ \psi_{k+1} = \mathscr L_h(\psi_k) \]
so that by Eq.~\eqref{MSGD} and the nature of $\zeta_\sharp$, we have that for all $k$, 
\[ \|\psi_{k+1}\|_{\mathcal D_0} = \|\mathscr L_h(\psi_k)\|_{\mathcal D_0} \leq \Xi_h(\|\psi_k\|_{\mathcal D_0}) < \zeta_\sharp.\]
  Let $\psi$ be a weak limit of the $\psi_k$'s.  It remains to identify $\psi$ with the object featured in Eq.~\eqref{XD3W}.

\b{To this end we consider 
$\delta_{k} := \psi_{k} - \psi_{k-1}$.  Since $\mathscr L_h(\psi) = A + hB_\psi - \frac{1}{2} \mathcal E_2(h\psi)$, }
\[ \b{\delta_{k+1} = \mathscr L_h(\psi_k) - \mathscr L_h(\psi_{k-1}) = h(B_{\psi_{k}} - B_{\psi_{k-1}}) - \frac{1}{h} \left[\mathcal E_2 (h\psi_{k}) - \mathcal E_2(h\psi_{k-1})\right].}\]
\b{From our previous estimates, it follows that }
\begin{equation}\label{KHN} \b{\begin{split}
\|\delta_{k+1}\|_{\mathcal D_0} &\leq b_0 h^{1/2} \|\delta_k\|_{\mathcal D_0} + h \beta_0 \|\delta_k\|_{\mathcal D_0} + h \tilde G(\|\delta_k\|_{\mathcal D_0}, \|\psi_{k-1}\|_{\mathcal D_0}) + h^{1/2} \tilde g(\|\delta_k\|_{\mathcal D_0}, \|\psi_{k-1}\|_{\mathcal D_0}) \\
&:= h^{1/2} \|\delta_k\|_{\mathcal D_0} \cdot \Gamma_h(\|\delta_k\|_{\mathcal D_0}, \|\psi_{k-1}\|_{\mathcal D_0}), \end{split}}\end{equation}
\b{where $\tilde G(\cdot)$ and $\tilde g(\cdot)$ are defined analogously as $G(\cdot)$ and $g(\cdot)$ in Eq.~\eqref{BLIH1} and \eqref{BLIH2}, corresponding now to estimates involving \emph{differences} of $\psi_k$'s, e.g.,}
\[ \b{h(w_{\psi_{k}} - w_{\psi_{k-1}}) = W*(\text{e}^{\Psi_0 + h \psi_{k-1}}) (\text{e}^{h\delta_k} - 1).}\]
\textcolor{blue}{Since $\tilde G(\|\delta_k\|_{\mathcal D_0}, \|\psi_{k-1}\|_{\mathcal D_0})$ and $\tilde g(\|\delta_k\|_{\mathcal D_0}, \|\psi_{k-1}\|_{\mathcal D_0})$ tend to definitive constants which are uniformly bounded as $h$ goes to zero (c.f., the discussion following Eq.~\eqref{BLIH1} and \eqref{BLIH2}) so does $\Gamma_h(\|\delta_k\|_{\mathcal D_0}, \|\psi_{k-1}\|_{\mathcal D_0})$.  It follows from Eq.~\eqref{KHN} that $\delta_k$ tends to zero as $k$ tends to infinity for all $h$ sufficiently small.}

More precisely, consider $\zeta_{\sharp\sharp}$ which is the limit as $h\to h_{1}$ of $\zeta_{\sharp}(h)$.  Let $h_{2}$ be defined by 
$$
[h_{2}]^{\frac{1}{2}}\times [
\hspace{-.4cm}
\sup_{\substack{ h < h_{1}\\ b < \zeta_{\sharp\sharp},a < 2\zeta_{\sharp\sharp} }}
\hspace{-.4cm}
\Gamma_h(a, b)] = 1.
$$
\b{By Eq.~\eqref{KHN}, the above choice of $h_2$ implies that for $h < h_{2}$, there is some $\alpha_h < 1$ such that $\|\delta_{k+1}\|_{\mathcal D_0}  < \alpha_h \|\delta_k\|_{\mathcal D_0}$ for all $k$. }
It follows that $\psi_{k}$ converges (strongly in $\mathcal D_{0}$) to the $\psi$ given in
Eq.~\eqref{XD3E} and for $h < h_{2}$ we have (for all $j$) the  $\psi_{j}$'s  and the limiting $\psi$ are bounded in $\mathcal D_0$ by 
$\zeta_{\sharp}(h_{2})$.  Moreover, all parameters, $h_{1}$, $h_{2}$ \dots $\zeta_{\sharp}(h_{2})$
depend only on the parameters of $\mathfrak D_0$ and are uniformly controlled by these elements.  
\end{proof}

\subsection{Advanced Analysis}
\label{OKIJ}
The situation concerning the $\mathcal D_{2}$--norm of $\psi$ will not be as straightforward as that of the above -- indeed, there is no hope for a result analogous to Proposition \ref{XTY}.
In particular, let us investigate the very first term
\begin{equation}
\label{PU9Y}
\psi_{\star} := L_{h}^{-1}(\nabla^{2}\Psi_{0}).
\end{equation}
While it is clear that $\|\psi_{\star}\|_{\mathcal D_{2}} < \infty$, this norm might well be divergent as 
$h\downarrow 0$; e.g., 
$\|h\psi_{\star}\|_{\mathcal D_{2}}$ could be a sublinear power of $h$.  However, as will be demonstrated, \b{if $\Psi_0$ has this behavior}, these circumstances are actually beneficial. \b{Indeed, due to the positivity of the operator $-\nabla^2$, adding $\psi_\star$ would \emph{reduce} the overall magnitude of the Fourier coefficients: Explicitly, let us define the ``preliminary correction''}
$$
\Psi_{\star} := \Psi_{0} + h\psi_{\star}.
$$
Then
\begin{equation}
\label{ABFG}
\hat{\Psi}_{\star}(k)  = \hat{\Psi}_{0}(k)  - \frac{hk^{2}}{1 + hk^{2}}\hat{\Psi}_{0}(k)
  =  \frac{\hat{\Psi}_{0}(k)}{1 + hk^{2}}.
\end{equation}
i.e., the magnitude of \textit{every} non--zero mode has been reduced.

Hence, the task at hand will be to show that the rest of $\psi$ does not disrupt this \b{beneficial} effect.  Specifically, defining
$$
\psi_{\bullet} := \psi - \psi_{\star},
$$
our aim is to show that the difference, $\|\psi_{\bullet}\|_{\mathcal D_{2}} - \|\psi_{\star}\|_{\mathcal D_{2}}$,
is either negative or of order unity.  We remark that in contrast to the preceding analysis, there is no reason to expect matching with powers of 
$h$.  Thus, we will be working directly with 
$h\psi_{\star}$, $h\psi_{\bullet}$, etc., even though, at times, appearances of $h$, e.g., multiplying both sides of an equation, may seem redundant.   

The preliminary challenge arises from the inhomogeneous terms.  We define $r_{\bullet}$ and $s_{\bullet}$ via:
$$
hr_{\bullet}
:=
hL^{-1}_h
(|\nabla\Psi_{0}|^{2})
\hspace{.5 cm}
\text{and}
\hspace{.5 cm}
hs_{\bullet}
:=
hL^{-1}_h
(\nabla\Psi_{0}\cdot\nabla w_{0}).
$$
Our first goal is an estimate on their $\mathcal D_{2}$ norms.  We start by invoking the relevant length scale for these problems:
\begin{definition}
Let 
$p_{0} = p_{0}(h)$ be such that
$$
\frac{1}{L^{d}}\sum_{p: |p| \geq p_{0}}|p \hat{\Psi}_{0}(p)|  \geq h
$$
while without the last shell,
$$
\frac{1}{L^{d}}\sum_{p: |p| > p_{0}}|p \hat{\Psi}_{0}(p)| <  h.
$$
\end{definition}
\noindent \textbf{Claim} A$_{1}$.  
There is an $a$ depending only on $\mathfrak D_0$ such that if 
$p_{0} > ah^{-\frac{1}{2}}$ then 
$$
\|h\psi_{\star}\|_{\mathcal D_{2}} \geq 
2
\left(\|hr_{\bullet}\|_{\mathcal D_{2}} + \|hs_{\bullet}\|_{\mathcal D_{2}}\right).
$$

\noindent\textit{Proof of Claim.}  
We first note that, \textit{a priori}, $\|hr_{\bullet}\|_{\mathcal D_{2}}$ and $\|hs_{\bullet}\|_{\mathcal D_{2}}$
do not exceed the order of 
$h^{\frac{1}{2}}$.  Indeed, we write

\begin{equation}
\label{PYVW}
hk^{2}\hat{r}_{\bullet}(k)  =  
-\frac{1}{L^{2d}}\frac{hk^{2}}{1 + hk^{2}}
\sum_{q}q\hat{\Psi}_{0}(q)\cdot(k-q)\hat{\Psi}_{0}(k-q)
\end{equation}so, taking absolute values etc., and bringing one factor of $k$ inside the sum,
$$
\b{|hk^{2}\hat{r}_{\bullet}(k)|
\leq
\frac{1}{L^{2d}}\frac{h|k|}{1 + hk^{2}}
\sum_{q}\left(q^{2}|k-q| + (k-q)^{2}|q|\right) \cdot |\hat\Psi_{0}(q)\|\hat\Psi_{0}(k-q)|.}
$$
Using $h^{\frac{1}{2}}|k|/(1 + hk^{2}) \leq \frac{1}{2}$, and summing over $k$, we are left with 
\b{$\frac{1}{2}h^{\frac{1}{2}}\times 2\times \|\Psi_{0}\|_{\mathcal D_{1}}\|\Psi_{0}\|_{\mathcal D_{2}}$:}
$$
\|hr_{\bullet}\|_{\mathcal D_{2}}
\leq  h^{\frac{1}{2}} \cdot \|\Psi_{0}\|_{\mathcal D_{1}}\|\Psi_{0}\|_{\mathcal D_{2}}.
$$
Similarly, 
$$
\|hs_{\bullet}\|_{\mathcal D_{2}}
\leq  \frac{1}{2}h^{\frac{1}{2}}\cdot \left(\|\Psi_{0}\|_{\mathcal D_{1}}\|w_{0}\|_{\mathcal D_{2}} + \|\Psi_{0}\|_{\mathcal D_{2}}\|w_{0}\|_{\mathcal D_{1}}
\right)
$$
On the other hand,
$$
\|h\psi_{\star}\|_{\mathcal D_{2}}
\geq
\frac{1}{L^{d}}\sum_{k: |k| \geq p_{0}}
\frac{hk^{2}}{1 + hk^{2}} \cdot
k^{2}|\hat{\Psi}_{0}(k)|
\geq
\frac{1}{L^{d}} \frac{hp_{0}^{3}}{1 + hp_{0}^{2}}
\sum_{k: |k| \geq p_{0}}
|k\|\hat{\Psi}_{0}(k)|
\geq
\frac{h^{2}p_{0}^{3}}{1 + hp_{0}^{2}}.
$$
Thus, if $p_{0} \geq ah^{-\frac{1}{2}}$ where $a$ is given by 
$$
\b{\frac{a^{3}}{1 + a^{2}}  =  
 2\left[\|\Psi_{0}\|_{\mathcal D_{1}}\|\Psi_{0}\|_{\mathcal D_{2}}
+
\frac{1}{2}(\|\Psi_{0}\|_{\mathcal D_{1}}\|w_{0}\|_{\mathcal D_{2}} + \|\Psi_{0}\|_{\mathcal D_{2}}\|w_{0}\|_{\mathcal D_{1}})
\right]}
$$
the claim is established.  \hspace{1 cm}  $\blacksquare$

\vspace {.5 cm}
We may thus proceed under the assumption that
$p_{0} \leq ah^{-\frac{1}{2}}$ since otherwise, the $r_{\bullet}$ and $s_{\bullet}$ terms are well in hand.  

\noindent \textbf{Claim} A$_{2}$. Our next claim is that, under the assumption $p_{0} \leq ah^{-\frac{1}{2}}$,
both $r_{\bullet}$ and $s_{\bullet}$ admit the bounds 
\begin{align}
\|hr_{\bullet}\|_{\mathcal D_{2}}  \leq  C_{r}\|hL_{h}^{-1}\Psi_{0}\|_{\mathcal D_{3}} + hc_{r}
\notag
\\
\|hs_{\bullet}\|_{\mathcal D_{2}}  \leq  C_{s}\|hL_{h}^{-1}\Psi_{0}\|_{\mathcal D_{3}} + hc_{s}
\end{align}
where $C_{r}, \dots, c_{s}$ are constants which depend only on $\mathfrak D_0$.

\noindent\textit{Proof of Claim.}  
Let us proceed with the analysis of Eq.~\eqref{PYVW} taking absolute values etc., and summing over $k$ at fixed $q$.  
First, we investigate the region where $|k-q| > p_{0}$.  Here we may use 
$hk^{2}/[1 + hk^{2}] < 1$ leaving us with 
$$
\frac{1}{L^{d}}|q \hat{\Psi}_{0}(q)| \cdot \frac{1}{L^{d}}\sum_{k: |k-q| > p_{0}}|(k-q) \hat{\Psi}_{0}(k-q)|
\leq
\frac{1}{L^{d}}|q \hat{\Psi}_{0}(q)| \cdot h.
$$
The summation over $q$ gives the bound $h\|\Psi_{0}\|_{\mathcal D_{1}}$ which is part of the $c_{r}$--term.

What remains to be estimated is the quantity
$$
\frac{1}{L^{2d}}
\sum_{k,q: |k-q| \leq p_{0}}
\frac{hk^{2}}{1 + hk^{2}} \cdot 
|q \hat{\Psi}_{0}(q)| \cdot |(k-q)\hat{\Psi}_{0}(k-q)|.
$$
\b{Similarly to the above, if we first restrict summation over $q$ to $|q| > p_0$, we may divest of the factor $hk^{2}/(1 + hk^{2})$ and, as an upper bound, summing over $(k-q)$ yields a factor of $\|\Psi_0\|_{\mathcal D_1}$ which is then multiplied by a factor of $h$ from the sum over $q$.  Thus we arrive at another estimate of $h\|\Psi_{0}\|_{\mathcal D_{1}}$ which we add to the $c_{r}$--term.}

We are left with the case where $|q| < p_0$ and $|k-q| < p_{0}$.  Here, for the $k^{2}$ in the numerator we write
$k^{2} = q^{2} + 2q \cdot (k-q) + (k-q)^{2}$ giving us three terms to estimate the first of which is
$$
\frac{1}{L^{2d}}\sum_{k,q:|q|, |k-q| \leq p_{0}}\frac{h}{1 + hk^{2}} \cdot 
|q^{3} \hat{\Psi}_{0}(q)| \cdot
|(k-q) \hat{\Psi}_{0}(k-q)|
$$
Now $\frac{1}{1 + hk^{2}} < 1$ and also $1 + hq^{2} \leq 1 + a^{2}$ so the upshot is that the above term is bounded above by
$$
\frac{1}{L^{2d}}h(1 + a^{2})\sum_{k,q}\frac{1}{1 + hq^{2}} \cdot 
|q^{3} \hat{\Psi}_{0}(q)| \cdot
|(k-q) \hat{\Psi}_{0}(k-q)|
$$
where we have now relaxed the restriction on the range of summation.  Summing over $k$ we acquire our first contribution to $C_{r}$, namely $(1 + a^{2})\|\Psi_{0}\|_{\mathcal D_{1}}$.

The second term is the quantity
$$
\frac{2}{L^{2d}}\sum_{k,q:|q|, |k-q| \leq p_{0}}\frac{h}{1 + hk^{2}} \cdot 
|q^{2} \hat{\Psi}_{0}(q)| \cdot
|(k-q)^{2} \hat{\Psi}_{0}(k-q)|.
$$
Here we can relax the restriction over the summation and use $\frac{1}{1 + hk^{2}} < 1$ to arrive at the bound of 
$2h\|\Psi_{0}\|^{2}_{\mathcal D_{2}}$ which is another contribution to the $c_{r}$--term.  
Our third term is identical to the first with the roles of $q$ and $k-q$ switched and may be estimated by the same procedure.  

The analysis of $s_{\bullet}$ follows a similar set of procedures.  We will dispense with the details and state the result:
$$
C_{s}  =  (1 + a^{2}) \cdot \|w_{0}\|_{\mathcal D_{1}}
$$
and 
$$
c_{s}  = \b{2} \|w_{0}\|_{\mathcal D_{1}}  +  2\|\Psi_{0}\|_{\mathcal D_{2}}\|w_{0}\|_{\mathcal D_{2}}
+ \|\Psi_{0}\|_{\mathcal D_{1}}\|w_{0}\|_{\mathcal D_{3}}.
$$
The claim is established.  \hspace {1 cm}  $\blacksquare$

Thus so far, on the basis that $p_{0} \leq ah^{-\frac{1}{2}}$, we now have 
the $r_{\bullet}$  and $s_{\bullet}$ terms essentially bounded by 
$\|hL_{h}^{-1}\Psi_{0}\|_{\mathcal D_{3}}$.  The next step is the following:

\noindent \textbf{Claim} A$_{3}$.  \b{If $p_0 \leq ah^{-1/2}$, then either}
$$
\|h\psi_{\star}\|_{\mathcal D_{2}} > (C_{r} + C_{s}) \cdot \|hL_{h}^{-1}\Psi_{0}\|_{\mathcal D_{3}} 
$$
(where the difference may be considerable) or both $\|\psi_{\star}\|_{\mathcal D_{2}}$ and 
$\|L_{h}^{-1}\Psi_{0}\|_{\mathcal D_{3}}$ are bounded above by constants depending only on $\mathfrak D_0$.

\noindent\textit{Proof of Claim.}  
We \b{are to compare}:
\begin{equation}\label{CMRI}
\frac{q^{4}}{1 + hq^{2}}|\hat{\Psi}_{0}(q)|
\hspace{.25cm}
\text{vs.}
\hspace{.25cm}
(C_{r} + C_{s}) \frac{|q^{3}|}{1 + hq^{2}}|\hat{\Psi}_{0}(q)|;
\end{equation}
obviously if $|q| \geq (C_{r} + C_{s})$ the terms contributing to $\|\psi_{\star}\|_{\mathcal D_{2}}$ are dominant and we are done.  Let us define $q_{0} := 2(C_{r} + C_{s})$
and write $\|\psi_{\star} \|_{\mathcal D_{2}}= \underline{a} + b$ where 
$$
\underline{a}  =  \sum_{|q| \leq q_{0}}\frac{q^{4}}{1 + hq^{2}}|\hat{\Psi}_{0}(q)|,
\hspace{.75 cm}
b  =  \sum_{|q| > q_{0}}\frac{q^{4}}{1 + hq^{2}}|\hat{\Psi}_{0}(q)|
$$
with a similar decomposition ($|q| \leq q_{0}$, $|q| > q_{0}$) for $(C_{r} + C_{s}) \cdot \|L_{h}^{-1} \Psi_0\|_{\mathcal D_{3}}$ denoted by 
$\underline{\alpha}$ and $\beta$.  
So, let us suppose $\underline{a} + b \leq \underline{\alpha} + \beta$.
Since we have arranged $b \geq 2\beta$
this implies that $\underline{a} \leq \underline{\alpha} - \beta$ and hence $\underline{\alpha} \geq \beta$
and so
$$
\|\psi_{\star}\|_{\mathcal D_{2}}
\leq
2\underline{\alpha}  =  2(C_{r} + C_{s})\sum_{|q| \leq q_{0}}\frac{|q|^{3}}{1 + hq^{2}}|\hat{\Psi}(q)| 
\leq q_{0}^{2} \cdot \|\Psi_{0}\|_{\mathcal D_{2}}.
$$
I.e., $\|h\psi_{\star}\|_{\mathcal D_{2}}$ is actually of order $h$.  The same bound (and conclusion) holds for 
$\|L^{-1}_{h}\Psi_{0}\|_{\mathcal D_{3}}$ which (also) does not exceed
$2\underline{\alpha}$.
\hspace{.5 cm}
$\blacksquare$

\b{With these results in hand, we can now establish:}

\begin{proposition}
\label{DBFS}
The $\mathcal D_{2}$--norms of $\Psi_{0}$ and its successor $\Psi_{1}$, acquired after one iteration of the discretization, satisfy
$$
\|\Psi_{1}\|_{\mathcal D_{2}}  -  \|\Psi_{0}\|_{\mathcal D_{2}}
\leq
\mathfrak b_{2} h
$$
where $\mathfrak b_{2} > 0$ depends only on the elements of $\mathfrak D_0$.  
\end{proposition}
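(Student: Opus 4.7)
The plan is to exploit the identity that the preliminary correction $\Psi_\star = \Psi_0 + h\psi_\star$ has a strictly smaller $\mathcal D_2$--norm than $\Psi_0$: because $\hat\Psi_\star(k) = \hat\Psi_0(k)/(1+hk^2)$, every Fourier coefficient is scaled down in magnitude by the same positive factor. This yields, exactly, $\|\Psi_0\|_{\mathcal D_2} = \|\Psi_\star\|_{\mathcal D_2} + \|h\psi_\star\|_{\mathcal D_2}$. I would then write $\Psi_1 = \Psi_\star + h\psi_\bullet$ with $\psi_\bullet := \psi - \psi_\star$ and apply the triangle inequality to reduce the target $\|\Psi_1\|_{\mathcal D_2} - \|\Psi_0\|_{\mathcal D_2} \leq \mathfrak b_2 h$ to the comparison $\|h\psi_\bullet\|_{\mathcal D_2} - \|h\psi_\star\|_{\mathcal D_2} \leq \mathfrak b_2 h$. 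Thus the gain produced by the implicit smoothing of $\underline L_h^{-1}$ on the leading Laplacian term provides precisely the budget needed to absorb the remaining nonlinearities.

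Next I would decompose $h\psi_\bullet = h\underline L_h^{-1}(A') + h^2 \underline L_h^{-1}(B_\psi) - \underline L_h^{-1}(\mathcal E_2(h\psi))$, where $A'$ consists of the static inhomogeneous pieces of $A$ \emph{other than} $\nabla^2\Psi_0$, and estimate term by term in $\mathcal D_2$. The nonlinear pieces $hr_\bullet, hs_\bullet$ arising from $|\nabla\Psi_0|^2$ and $\nabla w_0\cdot\nabla\Psi_0$ are precisely the contributions controlled by Claims A$_1$--A$_3$: either $\|h\psi_\star\|_{\mathcal D_2} \geq 2(\|hr_\bullet\|_{\mathcal D_2}+\|hs_\bullet\|_{\mathcal D_2})$ (the high-frequency case), or the low-frequency case, in which either $\|h\psi_\star\|_{\mathcal D_2}$ dominates $(C_r+C_s)\|h\underline L_h^{-1}\Psi_0\|_{\mathcal D_3}$, or else $\|h\psi_\star\|_{\mathcal D_2}$ itself is $O(h)$ and hence $\|hr_\bullet\|_{\mathcal D_2}+\|hs_\bullet\|_{\mathcal D_2}$ is $O(h)$ by the pointwise bound of Claim A$_2$. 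Either way, $\|hr_\bullet\|_{\mathcal D_2}+\|hs_\bullet\|_{\mathcal D_2} \leq \|h\psi_\star\|_{\mathcal D_2} + O(h)$. The remaining static pieces of $A'$ (namely $h\underline L_h^{-1}(\nabla^2 w_0)$ and $h\underline L_h^{-1}((\Psi_0+w_0)\Omega_0)$) are bounded directly in $\mathcal D_2$, in terms of $v_4$ and the elements of $\mathfrak D$, yielding contributions of order $h$.

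The remaining terms $h^2 \underline L_h^{-1}(B_\psi)$ and $\underline L_h^{-1}(\mathcal E_2(h\psi))$ involve $\psi$ itself, which is where the main obstacle lies, since we do not a priori control $\|\psi\|_{\mathcal D_2}$. For most of these (those involving $w_\psi$, $\psi\Omega_0$, or products of $\psi$ with smooth factors) the convolution with $W$ or the bounded prefactors reduce matters to $\|\psi\|_{\mathcal D_0}$, which is bounded by $\mathfrak b_0$ via Proposition \ref{XTY}, producing $O(h)$ contributions. The genuinely delicate piece is $h^2\underline L_h^{-1}(\nabla\psi\cdot\nabla\Psi_0)$, where I would split the Fourier sum as in the proof of Proposition \ref{XTY}, using the estimate $h|k|/(1+hk^2) \leq \tfrac{1}{2}h^{1/2}$ to convert one of the $h$ factors into $h^{1/2}$ and thereby buying a small prefactor in front of $\|\psi\|_{\mathcal D_2}$; a parallel treatment handles $\mathcal E_2(h\psi)/h$, whose leading $h^2\psi^2/2$ term is $O(h)$ in $\mathcal D_2$ once one exchanges one factor of $\|\psi\|_{\mathcal D_2}$ against a factor of $\|\psi\|_{\mathcal D_0}$. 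The self-consistent bound on $\|h\psi_\bullet\|_{\mathcal D_2}$ then closes exactly as in Proposition \ref{XTY}: one writes the estimate as $\|h\psi_\bullet\|_{\mathcal D_2} \leq \|h\psi_\star\|_{\mathcal D_2} + c_1 h + c_2 h^{1/2}\|h\psi_\bullet\|_{\mathcal D_2} + (\text{higher order})$, absorbs the self-referential term into the left side for $h$ sufficiently small (depending only on $\mathfrak D$), and obtains the claimed bound. The hard part is precisely this closure, namely ensuring that every $\psi$-dependent contribution either is controlled through $\|\psi\|_{\mathcal D_0}$ or appears with an a priori small $h^{1/2}$ prefactor coming from the smoothing action of $\underline L_h^{-1}$.
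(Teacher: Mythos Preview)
Your proposal is correct and follows essentially the same route as the paper's own proof: the reduction via the exact identity $\|\Psi_0\|_{\mathcal D_2} = \|\Psi_\star\|_{\mathcal D_2} + \|h\psi_\star\|_{\mathcal D_2}$, the invocation of Claims~A$_1$--A$_3$ to control $r_\bullet,s_\bullet$ against $\|h\psi_\star\|_{\mathcal D_2}$, the $h|k|/(1+hk^2)\le \tfrac12 h^{1/2}$ trick on the cross term $h^2\underline L_h^{-1}(\nabla\psi\cdot\nabla\Psi_0)$, and the self--consistent absorption are all exactly what the paper does. One minor point: the paper disposes of $\underline L_h^{-1}\mathcal E_2(h\psi)$ in $\mathcal D_2$ more simply than you suggest, using $k^2/(1+hk^2)\le 1/h$ together with $\|\mathcal E_2(h\psi)\|_{\mathcal D_0}\le \mathcal E_2(h\mathfrak b_0)=O(h^2)$, so no splitting is needed there; and in the closure the paper explicitly splits $\psi=\psi_\star+\psi_\bullet$ in the gradient cross term and argues by cases (if $\|h\psi_\star\|_{\mathcal D_2}\ge\|h\psi_\bullet\|_{\mathcal D_2}$ one is already done, otherwise the $\psi_\star$--contribution is dominated by the $\psi_\bullet$--contribution and can be absorbed), which is slightly more delicate than a single absorption step --- your sketch would benefit from making this dichotomy explicit.
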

\noindent We reiterate that the left hand side in the above display can be considerably negative.
\begin{proof}
\b{Since $\Psi_1 = \Psi_0 + h \psi$, we certainly have $\|\Psi_1\|_{\mathcal D_2} - \|\Psi_0\|_{\mathcal D_2} \leq \|h\psi\|_{\mathcal D_2}$. 
Next we recall
\[ h\psi_\star = h L_h^{-1}(\nabla^2 \Psi_0),~~~h\hat\psi_\star(k) = - \frac{hk^2}{1 + hk^2} \hat \Psi_0(k), ~~\mbox{for all } k.\]
It follows that if we write as described before $h\psi = h\psi_\star + h\psi_\bullet$, then 
\[ |\hat\Psi_1(k)| \leq |\hat \Psi_0(k)| \cdot \left(1 - \frac{hk^2}{1+hk^2}\right) + |h\hat \psi_\bullet(k)| = |\hat \Psi_0(k)| - |h\hat \psi_\star(k)| + |h\hat \psi_\bullet(k)|.\]
Multiplying by $k^2$ and summing we see that 
\[ \|\Psi_1\|_{\mathcal D_2}  - \|\Psi_0\|_{\mathcal D_2} \leq  \|h\psi_\bullet\|_{\mathcal D_2} - \|h\psi_\star\|_{\mathcal D_2}.\]
It follows from the above display that in case $\|h\psi_\bullet\|_{\mathcal D_2}$ is \emph{not} of order $h$, then the proof of the proposition amounts to establishing the statement that
$\|h\psi_{\bullet}\|_{\mathcal D_{2}}  -  \|h\psi_{\star}\|_{\mathcal D_{2}} \leq \mathfrak b_2 h$.}

We have 
\[\begin{split} h\psi_\bullet &= h\psi - h\psi_\star = hr_\bullet + hs_\bullet\\
& + hL_h^{-1} \left[\nabla^2w_0 -  \Omega_0(\Psi_0 + w_0 - \mu)\right] + h^2 (\nabla \psi \cdot \nabla \Psi_0 + \nabla \Psi_0 \cdot \nabla w_\psi + \nabla^2 w_\psi) - \mathcal E_2(h\psi).
\end{split}\]
There are three terms in the expression for $h\psi_{\bullet}$ which must be dealt with explicitly:
These are the $r_{\bullet}$ and $s_{\bullet}$--terms as well as the term
$h^{2}\nabla\psi\cdot\nabla\Psi_{0}$.
All other terms can be handled with straightforward methods.  We shall be content with a couple of examples:  
$$
\|hL_{h}^{-1}(\nabla^{2}w_{0})\|_{\mathcal D_{2}}  =
\frac{1}{L^{d}}\sum_{k}\frac{hk^{2}}{1 + hk^{2}}\cdot k^{2}|\hat{w}_{0}(k)|
\leq \frac{h}{L^{d}}\sum_{k}k^{4}|\hat{W}(k)| \cdot |\hat{N}_{0}|
\leq
hv_{4}\text{e}^{\|\Psi_{0}\|_{\mathcal D_0}}
$$
and
\begin{align}
\|h^{2}L_{h}^{-1}(\nabla \Psi_0&\cdot \nabla w_\psi)\|_{\mathcal D_{2}}
=
\frac{h}{L^{2d}}\sum_{k}\frac{hk^{2}}{1 +hk^{2}}
\sum_{q}|q\hat{\Psi}_{0}(q)| \cdot |(k-q)\hat{w}_{\psi}(k-q)|
\notag
\\
&\leq
h v_{1}\|\Psi_{0}\|_{\mathcal D_{1}} \cdot \text{e}^{\|\Psi_{0}\|_{\mathcal D_{0}}}\frac{1}{h}\mathcal E_{1}(h\|\psi\|_{\mathcal D_0})
\leq
h v_{1}\|\Psi_{0}\|_{\mathcal D_{1}} \cdot
\text{e}^{\|\Psi_{0}\|_{\mathcal D_{0}}}  \frac{1}{h}\mathcal E_{1}(h\mathfrak b_{0})
\notag
\end{align}
(The quantity $\mathfrak b_{0}$ is defined in the statement of Proposition \ref{XTY}, \b{i.e., $\|\psi\|_{\mathcal D_0} \leq \mathfrak b_0$.})
The result is that we may bound (the sum of) all these terms by an $h\tilde{A}(h)$ with $\tilde{A}$ bounded and tending to 
some $\tilde{A}(0)$ as $h\to 0$.  This leaves -- in addition to the $r_{\bullet}$ and $s_{\bullet}$--terms -- the quantity
$h^{2}(\nabla\Psi_0\cdot\nabla\psi)$ which we now estimate:  Writing $\psi = \psi_{\star} + \psi_{\bullet}$, we have \b{(again using $\frac{h|k|}{1 + hk^2} \leq \frac{1}{2} h^{1/2}$)}
\begin{align}
\label{CMMM}
h^{2}\|L_{h}^{-1}(\nabla \Psi_0 \cdot \nabla \psi_{\bullet})\|_{\mathcal D_2}
&=
\frac{1}{L^{2d}}\sum_{k,q}\frac{h^{2}k^{2}}{1 + hk^{2}} \cdot |q\hat{\psi}_{\bullet}(q)| \cdot |(k-q) \hat{\Psi}_{0}(k-q)|
\notag
\\
&\leq \frac{1}{L^{2d}}\frac{1}{2}h^{\frac{3}{2}}\sum_{k,q}
|\hat{\psi}_{\bullet}(q) \hat{\Psi}_{0}(k-q)| \cdot \left(q^{2}|k-q| + (k-q)^{2}|q|\right)
\notag
\\
&\b{\leq 
\frac{1}{2}h^{\frac{1}{2}} \left(\|h\psi_{\bullet}\|_{\mathcal D_{2}}\|\Psi_{0}\|_{\mathcal D_{1}} +
h^{\frac{1}{2}}\|h\psi_{\bullet}\|^{\frac{1}{2}}_{\mathcal D_{2}}\mathfrak b^{\frac{1}{2}}_{0}\|\Psi_{0}\|_{\mathcal D_{2}}
\right).}
\end{align}
\textcolor{blue}{In the last step we have used
$\|\psi_{\bullet}\|_{\mathcal D_{0}} \leq \mathfrak b_{0}$ which is admissible 
since in the derivation in Proposition \ref{XTY} of
$\|\psi\|_{\mathcal D_0} \leq \mathfrak b_0$ we estimated the absolute value of each
successive term the \textit{first} of which (c.f., Eq.~\eqref{DKRI}) was exactly
$\psi_\star$.  Thus the bound derived in Proposition \ref{XTY} actually amounts to 
the stronger bound 
$\|\psi_{\star}\|_{\mathcal D_{0}} + \|\psi_{\bullet}\|_{\mathcal D_{0}} \leq \mathfrak b_{0}$.}  We acquire an estimate similar to that in Eq.~\eqref{CMMM} for the $\psi_{\star}$--term (explicitly, Eq.~\eqref{CMMM} with $\psi_\star$ replacing $\psi_\bullet$).

We amalgamate our upper bound on $\|h\psi_{\bullet}\|_{\mathcal D_{2}}$:
\begin{align}
\|h\psi_{\bullet}\|_{\mathcal D_{2}} \leq
\tilde{A}h  +  \|hr_{\bullet}\|_{\mathcal D_{2}} 
&
+ \|hs_{\bullet}\|_{\mathcal D_{2}}
+ \frac{1}{2}h^{\frac{1}{2}}
\left(\|h\psi_{\bullet}\|_{\mathcal D_{2}}\|\Psi_{0}\|_{\mathcal D_{1}} +
h^{\frac{1}{2}}\|h\psi_{\bullet}\|^{\frac{1}{2}}_{\mathcal D_{2}}\mathfrak b^{\frac{1}{2}}_{0}\|\Psi_{0}\|_{\mathcal D_{2}}
\right)
\notag
\\
 & +
\frac{1}{2}h^{\frac{1}{2}}
\left(\|h\psi_{\star}\|_{\mathcal D_{2}}\|\Psi_{0}\|_{\mathcal D_{1}} +
h^{\frac{1}{2}}\|h\psi_{\star}\|^{\frac{1}{2}}_{\mathcal D_{2}}\mathfrak b^{\frac{1}{2}}_{0}\|\Psi_{0}\|_{\mathcal D_{2}}
\right).
\end{align}
Let us discuss the term(s) in the last line of the above display: \b{We emphasize that the last two bracketed terms are identical under the exchange $\|\psi_\star\|_{\mathcal D_2} \leftrightarrow \|\psi_\bullet\|_{\mathcal D_2}$. If the bracketed term on the last line (the $\psi_\star$ terms) exceeds the corresponding bracketed term just preceding (that have $\psi_{\star}$ replaced by $\psi_{\bullet}$) then we would immediately conclude that $\|\psi_\bullet\|_{\mathcal D_2} \leq \|\psi_\star\|_{\mathcal D_2}$ and we would be done.  Therefore let us assume that this is not the case}.  

Moreover, 
if 
$h^{\frac{1}{2}}\|h\psi_{\bullet}\|_{\mathcal D_{2}}^{\frac{1}{2}}\mathfrak b_{0}^{\frac{1}{2}}\|\Psi_{0}\|_{\mathcal D_{2}}
\geq \|h\psi_{\bullet}\|_{\mathcal D_{2}}\|\Psi_{0}\|_{\mathcal D_{1}}$ \b{
this would imply (using $\|\Psi_0\|_{\mathcal D_1} \leq \|\Psi_0\|_{\mathcal D_0}^{1/2} \|\Psi_0\|_{\mathcal D_2}^{1/2}$) that
$\|h\psi_{\bullet}\|_{\mathcal D_{2}}  \leq h\|\Psi_0\|_{\mathcal D_2}$}; 
once $\|h\psi_{\bullet}\|_{\mathcal D_{2}}$ is of order $h$, it is no longer important 
whether or not it exceeds $\|h\psi_{\star}\|_{\mathcal D_{2}}$, \b{so we may assume this is also not the case.}
Thus, there is no loss of generality if we proceed under both \b{(negative)} assumptions replacing \b{(as an upper bound)} the two bracketed terms in the above display by $2h^{\frac{1}{2}}
\|h\psi_{\bullet}\|_{\mathcal D_{2}}\|\Psi_{0}\|_{\mathcal D_{1}}$.  In this way we we arrive at the tentative estimate
\begin{equation}
\label{XZIU}
\b{\mathfrak a_h \|h\psi_\bullet\|_{\mathcal D_2}:= (1 - 2h^{\frac{1}{2}}\|\Psi_{0}\|_{\mathcal D_{1}}}) \cdot 
\|h\psi_{\bullet}\|_{\mathcal D_{2}} \leq
 \|hr_{\bullet}\|_{\mathcal D_{2}} 
+ \|hs_{\bullet}\|_{\mathcal D_{2}} + \tilde Ah.
\end{equation}

Next, we may assume that, as discussed in Claim A$_{1}$,  the quantity $p_{0}$ does not exceed 
$ah^{-\frac{1}{2}}$ since otherwise, automatically,  
$\|hr_{\bullet}\|_{\mathcal D_{2}} 
+ \|hs_{\bullet}\|_{\mathcal D_{2}}$ is dominated by \b{$\frac{1}{2}\|h\psi_{\star}\|_{\mathcal D_{2}}$} and the inequality in 
Eq.~\eqref{XZIU} becomes \b{$\|h\psi_\bullet\|_{\mathcal D_2} \leq \left(\tilde A + \frac{1}{2} \|\Psi_0\|_{\mathcal D_0}\right)\mathfrak a_h^{-1} \cdot h$, which is of the type we wanted.}
Thus assuming $p_0 \leq ah^{-1/2}$ and \b{using Claim A$_{2}$,} our tentative estimate becomes
$$
\b{\mathfrak a_h \|h\psi_{\bullet}\|_{\mathcal D_{2}}}
\leq (C_{r} + C_{s})\|hL_{h}^{-1}\Psi_{0}\|_{\mathcal D_{3}} + Ah
$$
where $A$ has been modified from $\tilde{A}$ by the addition of $(c_{r} + c_{s})$.

The conclusion is now inevitable.  From \textcolor{blue}{Claim A$_{3}$} we have that if the  first term on the right of the previous display 
exceeds $\|h\psi_{\star}\|_{\mathcal D_2}$ then both terms (and hence all terms) \b{are bounded by a $\mathfrak D_0$--dependent constant times $h$}; otherwise, 
this term is bounded by $\|h\psi_{\star}\|_{\mathcal D_{2}}$ and we conclude that 
\b{$$\|h\psi_\bullet\|_{\mathcal D_2} \leq \mathfrak a_h^{-1} \left(\|h\psi_\star\|_{\mathcal D_2} + Ah\right) $$}
and again we have an inequality of the type we wanted when $\|h\psi_{\star}\|_{\mathcal D_{2}}$ is relatively large.  
\end{proof}

To summarize, so far \b{we have the following results for one timestep of the iteration:}

\begin{corollary} \label{CEB}
Consider Eq.~\eqref{CVDA} with all elements of $\mathfrak D_0$ finite.  Then there is some $h_0 = h_0(\mathfrak D_0)$ such that for all $h < h_0$:

i) There is a classical \b{(i.e., $\mathcal D_2$, which implies the usual $\mathscr C^2$)} solution $N_1 =\mbox{e}^{\Psi_1}$ which is bounded below; 

ii) $\|\Psi_1\|_{\mathcal D_1} - \|\Psi_0\|_{\mathcal D_1}$ and $\|N_1 - N_0\|_{\infty}$ are bounded from above by a constant depending only on the elements of $\mathfrak D_0$ times $h$.
\end{corollary}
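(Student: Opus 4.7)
The plan is to assemble the corollary from the two propositions just established: Proposition \ref{XTY} (control of $\|\psi\|_{\mathcal D_0}$) and Proposition \ref{DBFS} (control of $\|\Psi_1\|_{\mathcal D_2}$). Recall $h\psi = \Psi_1 - \Psi_0$. First I would fix $h_0 \leq h_2$ small enough that both propositions apply. Then $\|\Psi_1 - \Psi_0\|_{\mathcal D_0} \leq h\mathfrak b_0$, and since the $\mathcal D_0$--norm majorizes the $L^\infty$--norm (all Fourier coefficients are summed absolutely), $\Psi_1$ is uniformly bounded by $\|\Psi_0\|_{\mathcal D_0} + h\mathfrak b_0$. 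This immediately gives the pointwise lower bound $N_1 = \mbox{e}^{\Psi_1} \geq \mbox{e}^{-\|\Psi_0\|_{\mathcal D_0} - h\mathfrak b_0}$.

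Next, by Proposition \ref{DBFS} we have $\|\Psi_1\|_{\mathcal D_2} \leq \|\Psi_0\|_{\mathcal D_2} + \mathfrak b_2 h_0$. Since $\mathcal D_2$ functions have Fourier coefficients with $\sum_k |k|^2|\hat\Psi_1(k)|<\infty$, both $\Psi_1$ and $\nabla^2\Psi_1$ are given by absolutely convergent Fourier series, whence $\Psi_1 \in \mathscr C^2$ and consequently $N_1 = \mbox{e}^{\Psi_1} \in \mathscr C^2$ as well. Therefore $N_1$ is a \emph{classical} (indeed $\mathscr C^2$) positive solution of Eq.\eqref{CVDA}, completing item (i).

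For item (ii), the $L^\infty$ statement is immediate: writing $N_1 - N_0 = \mbox{e}^{\Psi_0}(\mbox{e}^{h\psi}-1)$ and using $\|\Psi_0\|_\infty \leq \|\Psi_0\|_{\mathcal D_0}$ together with $\|\psi\|_{\mathcal D_0} \leq \mathfrak b_0$, we get $\|N_1 - N_0\|_\infty \leq h\mathfrak b_0 \mbox{e}^{\|\Psi_0\|_{\mathcal D_0}}\mbox{e}^{h\mathfrak b_0}$, which is $O(h)$ with constant depending only on $\mathfrak D$. For the $\mathcal D_1$ bound, I would apply the reverse triangle inequality $\|\Psi_1\|_{\mathcal D_1} - \|\Psi_0\|_{\mathcal D_1} \leq \|\Psi_1-\Psi_0\|_{\mathcal D_1} = \|h\psi\|_{\mathcal D_1}$ and then decompose $h\psi = h\psi_\star + h\psi_\bullet$ as in the proof of Proposition \ref{DBFS}. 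The $h\psi_\bullet$--piece was shown there to be of order $h$ in the $\mathcal D_2$--norm and is $O(h)$ in $\mathcal D_0$, so by interpolation it is $O(h)$ in $\mathcal D_1$. For $h\psi_\star = \underline L_h^{-1}(h\nabla^2\Psi_0)$, the Fourier multiplier $hk^2/(1+hk^2)\cdot|k|$ is bounded by $h^{1/2}$ uniformly in $k$; but one extracts the needed extra factor of $h^{1/2}$ by splitting the sum at $p_0$ as in Claim A$_1$, giving an additional contribution of order $h$ in $\mathcal D_1$ modulo the same type of dichotomy already exploited in Claim A$_3$.

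The main obstacle is this last step: obtaining a clean $O(h)$ (rather than $O(h^{1/2})$) bound on $\|h\psi_\star\|_{\mathcal D_1}$ requires re-running the mode-splitting logic of the $\mathcal D_2$ analysis one level down. Everything else is a direct packaging of the two preceding propositions together with the standard inequalities $\|\cdot\|_\infty \leq \|\cdot\|_{\mathcal D_0}$ and $\|\cdot\|_{\mathcal D_1}^2 \leq \|\cdot\|_{\mathcal D_0}\|\cdot\|_{\mathcal D_2}$.
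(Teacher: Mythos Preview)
Your treatment of item (i) and of the $L^\infty$ bound in item (ii) is correct and essentially matches the paper's (very terse) proof, which records only $|N_1 - N_0| \leq N_0[\text{e}^{h\mathfrak b_0} - 1]$ and notes that the lower bound on $N_1$ follows from continuity of $\Psi_1$.

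There is, however, a genuine gap in your handling of the $\mathcal D_1$ increment. By first invoking the reverse triangle inequality you reduce to bounding $\|h\psi\|_{\mathcal D_1}$, and then you must show $\|h\psi_\star\|_{\mathcal D_1} = O(h)$. As you yourself note, the multiplier $h|k|^3/(1+hk^2)$ only gives $O(h^{1/2})$, and the mode-splitting fix you sketch is both unnecessary and does not obviously close. More seriously, your assertion that ``the $h\psi_\bullet$--piece was shown [in Proposition \ref{DBFS}] to be of order $h$ in the $\mathcal D_2$--norm'' is a misreading: that proposition controls only the \emph{difference of norms} $\|\Psi_1\|_{\mathcal D_2} - \|\Psi_0\|_{\mathcal D_2}$, and in several branches of its argument $\|h\psi_\bullet\|_{\mathcal D_2}$ is allowed to be as large as $\|h\psi_\star\|_{\mathcal D_2}$ (which is $O(1)$), the point being that the $\psi_\star$--contribution then cancels it. So your interpolation step for $\|h\psi_\bullet\|_{\mathcal D_1}$ does not go through as stated.

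The correct route avoids the reverse triangle inequality on $\Psi_1 - \Psi_0$ altogether. Since $\hat\Psi_\star(k) = \hat\Psi_0(k)/(1+hk^2)$ (Eq.~(\ref{ABFG})), every Fourier mode is reduced and hence $\|\Psi_\star\|_{\mathcal D_1} \leq \|\Psi_0\|_{\mathcal D_1}$ outright. Then $\|\Psi_1\|_{\mathcal D_1} - \|\Psi_0\|_{\mathcal D_1} \leq \|h\psi_\bullet\|_{\mathcal D_1}$, and it remains only to estimate $\|h\psi_\bullet\|_{\mathcal D_1}$ directly. This is \emph{easier} than the $\mathcal D_2$ case, not harder: for the critical $r_\bullet$ and $s_\bullet$ terms one simply uses $h|k|/(1+hk^2) \leq h|k|$ to get, e.g., $\|hr_\bullet\|_{\mathcal D_1} \leq h\||\nabla\Psi_0|^2\|_{\mathcal D_1} \leq 2h\|\Psi_0\|_{\mathcal D_1}\|\Psi_0\|_{\mathcal D_2}$, and no dichotomy or $p_0$--splitting is needed. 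The $h^2\nabla\psi\cdot\nabla\Psi_0$ term is handled with the crude bound $\|h\psi\|_{\mathcal D_1} \leq (h\mathfrak b_0)^{1/2}\|h\psi\|_{\mathcal D_2}^{1/2} = O(h^{1/2})$, which after the extra factor of $h^{1/2}$ from the multiplier still lands at $O(h)$. This is presumably what the paper means by ``most of this follows from the above.''
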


\begin{proof}
\b{Most of this follows from the above.  For i), the existence of a solution $\Psi_1 = N_0 + h \psi$ is given by Proposition \ref{XTY} and the solution is classical by Proposition \ref{DBFS};  the lower bound on $N_1$ certainly follows since $\psi$ has bounded $\mathcal D_0$--norm by Proposition \ref{XTY}. As for ii), we have  by Propositions \ref{XTY} and \ref{DBFS} and Cauchy--Schwarz that $\|\Psi_1\|_{\mathcal D_1} - \|\Psi_0\|_{\mathcal D_1} \leq \|h\psi\|_{\mathcal D_1} \leq \mathfrak b_0 \mathfrak b_2 h$; finally, for the $L^\infty$--bound we have $|N_1 - N_0| \leq N_0 [ e^{h \mathfrak b_0} - 1]$.}
\end{proof}

\b{Under the \emph{assumption} that the discretization process persists for macroscopic times (i.e., the order of $h^{-1}$ iterations) we will show that the bounds derived so far also allow us to establish the needed convergence to the continuum result of Theorem \ref{FVJJ}. The questions which pertain to the long time survival of the iteration process will be postponed till the next subsection.}   

\noindent \textit{Proof of Theorem \ref{FVJJ}, item (A).}  As before, we let $\Psi_{t}$ denote the limiting quantity which satisfies the appropriate version
of Eq.~\eqref{CVDA}).  
%
\b{We will first establish} uniform convergence in the $\mathcal D_{0}$--norm, which may be expressed via
\begin{equation}\label{KRKE}
\lim_{h\to 0} \sup_{t\in [0,T]}\|\Psi_{t} - \Psi_{t}^{[h]}\|_{\mathcal D_{0}}
= 0.
\end{equation}
First, let $h_{j}$ denote a sequence tending to zero (always below $h_{T}$) where it may be envisioned that in the above, the superior $h\to 0$ limit is achieved.  Let $t_{j}$ denote an integer \b{(multiple of $h$)} time closest to the time where the $h = h_{j}$ supremum in the above display is to be found.  

\b{It follows from the weak convergence of $\Psi_{t}^{[h]}$ to $\Psi_t$ established in the proof of Theorem \ref{FVJJ} (in Section \ref{SecConvergence}) that for all $t$ and $q$, $\hat\Psi_t^{[h]}(q) = \int_{\mathbb T_L^d} \Psi_t^{[h]}(x) e^{iqx}~dx \longrightarrow \int_{\mathbb T_L^d} \Psi_t(x) e^{iqx}~dx = \hat \Psi_t(q)$, i.e.,}
$$
\hat{\Psi}_{t}^{[h]}(q)
\to 
\hat{\Psi}_{t}(q).
$$
\b{Now the uniform (in $h$, for $h$ small) bound on $\|\Psi_{t}^{[h]}\|_{\mathcal D_2}$ from Proposition \ref{DBFS} gives a so--called tightness condition: Indeed, if $\sum_k |k|^2  |\Psi_t^{[h]}| < C$ then we have that $\sum_{k > k_0} |\Psi_t^{[h]}| \leq C/k_0^2$ which can be made arbitrarily small by choosing $k_0$ sufficiently large. Then by the above convergence of modes the truncated sum of differences $\sum_{k \leq k_0} |\Psi_t^{[h]} - \Psi_t|$ tends to zero as $h$ tends to zero.  We can therefore conclude that}
$$
\limsup_{j\to\infty}\|\Psi_{t_{j}} - \Psi_{t_{j}}^{[h_{j}]}\|_{\mathcal D_{0}} = 0.
$$

Next we let $t^{\dagger}$ = $\lim_{j\to \infty} t_{j}$,
and then the \b{limit in Eq.~\eqref{KRKE}} is seen to be zero by \b{an} application of the triangle inequality:
\[ \begin{split}
\lim_{j\to\infty}\|\Psi_{t^{\dagger}} - \Psi_{t_{j}}\|_{\mathcal D_{0}}
\hspace{.125cm}
=
\hspace{.125cm}
\lim_{j\to\infty}\|\Psi_{t^\dagger} - \Psi^{[h_{j}]}_{t^{\dagger}}\|_{\mathcal D_{0}}
\hspace{.125cm}
&+ \hspace{.125cm}
\lim_{j\to\infty}\|\Psi^{[h_{j}]}_{t_{j}} - \Psi^{[h_{j}]}_{t^{\dagger}}\|_{\mathcal D_{0}}
\hspace{.125cm}\\
&\hspace{2cm}+ \hspace{.125cm}
\lim_{j \to \infty} \|\Psi_{t_j}^{[h_j]} - \Psi_{t_j} \|_{\mathcal D_0},
\end{split}\]
\b{where the middle term is zero since from Corollary \ref{CEB}, ii) we have that the term of interest is bounded by $\left[\frac{t_j - t^\dagger}{h_j}\right] \cdot \mathfrak b_0 h_j$ (here $[x]$ is the least integer so that $[x] \geq x$).}

\b{Finally, since Proposition \ref{DBFS} gives uniform boundedness (in $h$, for $h$ small) of $\|\Psi_t^{[h]}\|_{\mathcal D_2}$, together with the above uniform $\mathcal D_0$--convergence result, the strong $\mathcal D_1$--convergence follows from the Cauchy--Schwarz inequality: 
$$\|\Psi_t^{[h]} - \Psi_t\|_{\mathcal D_1} \leq \|\Psi_t^{[h]} - \Psi_t\|_{\mathcal D_0}^{1/2} \cdot \|\Psi_t^{[h]} - \Psi_t\|_{\mathcal D_2}^{1/2}.$$}
\qed
\subsection{Viability of Iterations}

For $h$ sufficiently small, we may envision a few runs of the process.  \b{After one step, we will have} an updated version of $\mathfrak D_0$ in which some of the parameters, \b{i.e., $\|\Psi_1\|_{\mathcal D_0}$ and $\|\Psi_1\|_{\mathcal D_2}$, have changed; we call these the \emph{mutable parameters}.}  And, if $h$ is still small enough this will allow (even according to the bounds) further iterations of the process.  In any case if $k$ iterations of the process are allowed, let us denote by
\b{$\mathfrak D_{t}^{[h]}$} the current values of the parameters where $h\leq(k-1)h \leq t < kh$:
\[ \mathfrak D_t^{[h]} = \{ \|\Psi_{k-1}\|_{\mathcal D_0}, \|\Psi_{k-1}\|_{\mathcal D_2},  v_0, \dots, v_4, \|W\|_{\mathcal D_2}\}.\] 
\b{(This definition is consistent with denoting the original
$\mathfrak D$ we started with by $\mathfrak D_{0}$, as we have done in the previous subsections.)}  Here, let us introduce the notion of \textit{viability}:

\begin{definition}\label{PVV}
Let $\mathfrak D_{t}^{[h]}$ be defined as above and $h$ considered \emph{fixed}.  Then the process is deemed to be \textit{viable} for $h$
if, on the basis of the \emph{bounds} derived in the preceding two subsections \b{(not necessarily the actual values)}

(a) $\mathfrak D_t^{[h]}$ permits an iteration of the process; further, still on the basis of these estimates for elements of $\mathfrak D_{t+h}^{[h]}$ \b{(i.e., considering the \emph{estimates} for $\mathfrak D_t^{[h]}$ to be playing the role of $\mathfrak D_0$ and used to estimate the  elements of $\mathfrak D_{t+h}^{[h]}$)} 

(b) an \emph{additional} iteration is possible.
\end{definition}

\medskip
It is noted that \b{by Corollary \ref{CEB}, given \emph{any} $\mathfrak D$ with finite elements, the process is viable \emph{if} $h$ is sufficiently small.}  However, this is far from what is needed since we must consider many iterations of the process \b{at \emph{fixed} $h$}.  The following represents a midway goal of this appendix:
\begin{proposition}
\label{PYBV}
Consider the setup encoded in Eq.~\eqref{CVDA} as has been described.  Then there exists a strictly positive 
$\textsc{t} = \textsc{t}(\mathfrak D_{0})$ such that for all $h$ sufficiently small, 
the process is viable up till time $\textsc{t}$, i.e.,
the elements of 
$\mathfrak D_{\textsc{t}}^{[h]}$ allow for continued iteration of the process.
\end{proposition}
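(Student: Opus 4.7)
The plan is to treat the process as a discrete dynamical system on the mutable elements of $\mathfrak{D}$, namely $\|\Psi_t\|_{\mathcal{D}_0}$ and $\|\Psi_t\|_{\mathcal{D}_2}$ (with $\|\Psi_t\|_{\mathcal{D}_1}$ controlled via interpolation, as noted in the discussion of $\mathfrak{D}$), and to compare this discrete system to a continuous ODE majorant whose solution exists on a definite time interval.

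First I would collect the per--step estimates already in hand. Proposition \ref{XTY} gives $\|\psi\|_{\mathcal{D}_0}\leq\mathfrak{b}_0(\mathfrak{D}_t)$, so $\|\Psi_{t+h}\|_{\mathcal{D}_0}\leq\|\Psi_t\|_{\mathcal{D}_0}+h\,\mathfrak{b}_0(\mathfrak{D}_t)$. Proposition \ref{DBFS} similarly yields $\|\Psi_{t+h}\|_{\mathcal{D}_2}\leq\|\Psi_t\|_{\mathcal{D}_2}+h\,\mathfrak{b}_2(\mathfrak{D}_t)$, and Corollary \ref{CEB} furthermore guarantees that after a step the updated density $N_{t+h}=\mathrm{e}^{\Psi_{t+h}}$ is still bounded below (so that the lower bound needed for Proposition \ref{XTY} persists with a controlled constant). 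Crucially, the functions $\mathfrak{b}_0,\mathfrak{b}_2$ depend \emph{monotonically and continuously} on the mutable elements of $\mathfrak{D}$ and are uniform on bounded ranges of those elements.

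Next I would introduce the majorizing ODE. Let $y(t)=(y_0(t),y_2(t))$ solve
\begin{equation*}
\dot y_0 = \mathfrak{b}_0(y_0,y_2,\text{fixed}),\qquad \dot y_2 = \mathfrak{b}_2(y_0,y_2,\text{fixed}),\qquad y(0)=(\|\Psi_0\|_{\mathcal{D}_0},\|\Psi_0\|_{\mathcal{D}_2}),
\end{equation*}
with the fixed elements of $\mathfrak{D}_0$ (the $v_m$'s and $\|W\|_{\mathcal{D}_2}$) plugged in as constants. Since $\mathfrak{b}_0,\mathfrak{b}_2$ are locally Lipschitz and bounded on bounded sets, a standard Picard argument gives existence of $y$ on some maximal interval $[0,\textsc{t}^*)$ with $\textsc{t}^*=\textsc{t}^*(\mathfrak{D}_0)>0$. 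Fix any $\textsc{t}<\textsc{t}^*$ and let $Y$ be an upper bound for $\sup_{[0,\textsc{t}]}(y_0(t)+y_2(t))$; call $\bar{\mathfrak{D}}$ the enlarged parameter set where the mutable entries are replaced by $Y$.

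Then I would run a straightforward discrete--continuous comparison. The per--step inequalities above are precisely the forward Euler increments for this ODE, with a one--sided error: because $\mathfrak{b}_0,\mathfrak{b}_2$ are monotone in their mutable arguments, one can prove by induction on $k\leq \textsc{t}/h$ that $\|\Psi_{kh}\|_{\mathcal{D}_0}\leq y_0(kh)+O(h)$ and $\|\Psi_{kh}\|_{\mathcal{D}_2}\leq y_2(kh)+O(h)$, uniformly in $k$ with $kh\leq \textsc{t}$. Consequently $\mathfrak{D}_t\preceq\bar{\mathfrak{D}}$ (componentwise) for every $t\in[0,\textsc{t}]$ as soon as $h$ is chosen smaller than some $h_\textsc{t}(\mathfrak{D}_0)$. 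The threshold $h_0(\mathfrak{D}_t)$ required for viability at time $t$ (via Propositions \ref{XTY} and \ref{DBFS}) is then bounded below by the positive constant $h_0(\bar{\mathfrak{D}})$, so the viability hypothesis of Definition \ref{PVV} is satisfied at every step up to time $\textsc{t}$.

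The main obstacle is really bookkeeping: making sure that each constant appearing in Propositions \ref{XTY}, \ref{DBFS} and Corollary \ref{CEB} has been shown to depend continuously (in fact monotonically) on the mutable entries of $\mathfrak{D}$, so that the ODE majorant is well defined and so that the inductive comparison closes up with a uniform $O(h)$ error rather than one that degrades as $k$ grows. Once that dependence is verified, the proposition follows by choosing $\textsc{t}$ strictly less than the blow--up time of the ODE majorant and then $h$ smaller than the corresponding $h_\textsc{t}$.
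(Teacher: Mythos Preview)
Your approach is correct and arrives at the same conclusion, but the paper's argument is considerably more elementary than the ODE--majorant route you outline. Rather than invoking Picard existence and a discrete--to--continuous Euler comparison, the paper simply fixes a ceiling: choose $H$ larger than all mutable elements of $\mathfrak{D}_0$, and let $h\,\mathbb{H}(H,h)$ denote the maximum per--step increment (according to the bounds of Propositions~\ref{XTY} and~\ref{DBFS}) when the mutable parameters are all set equal to $H$. By the monotonicity you also invoke, as long as the mutable parameters stay below $2H$, each step increases them by at most $h\,\mathbb{H}(2H,h)$. Taking $\mathbb{H}_2^\dagger = \sup_{h<h_2^\dagger}\mathbb{H}(2H,h)$ and choosing $h_2^\dagger$ so that the process is viable whenever parameters are below $2H$, one simply counts: at least $H/(h\,\mathbb{H}_2^\dagger)$ steps are allowed before the ceiling is reached, so $\textsc{t}\gtrsim H/\mathbb{H}_2^\dagger$.

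What each approach buys: the paper's ceiling argument avoids any appeal to ODE theory, Lipschitz hypotheses, or Euler error analysis---it is a three--line linear count. Your approach is the standard Gronwall/comparison method and would, in principle, yield a sharper $\textsc{t}$ (anything strictly below the ODE blow--up time rather than merely the doubling time). Note also that for a \emph{cooperative} system with nonnegative right--hand side, the forward Euler iterates actually stay \emph{below} the ODE solution exactly, so the $O(h)$ cushion you build in is not needed; this slightly simplifies your inductive step. The paper itself remarks, just after the proof, that the limiting ODE $\tfrac{dH}{dt}=\mathbb{H}(H,0)$ blows up in finite time, which is precisely why your $\textsc{t}^*$ is finite and why the indefinite extension requires the separate argument of Proposition~\ref{TEE}.
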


It is reemphasized that whenever $h$ is small enough so that the above statement holds, the conclusion pertains to the order of $\textsc{t}h^{-1}$ iterations of the process.

\begin{proof}
Let $H_0 > 0$ denote a number which is larger than all the mutable parameters in
$\mathfrak D_{0}$ -- and indeed might be regarded as considerably larger.  After an iteration of the process,
assuming $h$ is small enough to allow such,
 the mutable parameters will in all likelihood have changed.  So let us thus define $\mathbb H(H,h)$ so that $h\mathbb H$ is the maximum upward change of these mutable parameters, according to the bounds derived \b{in Propositions \ref{XTY} and \ref{DBFS}}, were they all equal to $H$ in the first place.   
Due to monotonicity based on inefficiency,  it is clear that if in
$\mathfrak D_{t}^{[h]}$ all mutable parameters are less than or equal to $H$, then in
$\mathfrak D_{t + h}^{[h]}$ none of them exceeds 
$H + h\mathbb H(H,h)$.  Moreover, it is clear that the $h \to 0$ limit of $\mathbb H(H, h)$ is finite, i.e., 
$\mathbb H(H,h)$ may be considered to be uniformly bounded \b{in $h$}.  

\b{As an explicit example, suppose it were the case that $\|\Psi_0\|_{\mathcal D_2} > \|\Psi_0\|_{\mathcal D_0}$, then we set $H_0 = \|\Psi_0\|_{\mathcal D_2}$ and perform the estimates in Propositions \ref{XTY} and \ref{DBFS} with $H_0$ playing the role of \emph{both} $\|\Psi_0\|_{\mathcal D_0}$ and $\|\Psi_0\|_{\mathcal D_2}$, yielding bounds $H_0 + h\mathbb H_0^{[0]}, H_0 + h\mathbb H_0^{[2]} $, respectively.  Let us suppose e.g., that $\mathbb H_0^{[0]} > \mathbb H_0^{[2]}$, then we would set $\mathbb H(H_0, h) = \mathbb H_0^{[0]}$.  In this way we arrive at $H_1 = H_0 + h\mathbb H(H_0, h)$. }

Now consider \b{$\mathbb H(2H_0, \cdot)$} and let $h_{2}^{\dagger}$ be small enough so that for all
$h \leq h_{2}^{\dagger}$, provided all mutable parameters in $\mathfrak D$ do not exceed $2H_0$, the process is still viable.  I.e., informally, if $2H_0$ is ``small enough for $h_{2}^{\dagger}$, then so is $2H_0 + h_{2}^{\dagger}\mathbb H(2H_0, h_{2}^{\dagger})$''.
Finally, let 
$$
\mathbb H_{2}^{\dagger}  = \sup_{h < h_{2}^{\dagger}}\mathbb H(2H_0, h).
$$

The following is now clear:  Starting at $\mathfrak D_{0}$ -- with all mutable parameters less than $H_0$, and $h \leq h_{2}^{\dagger}$, we may certainly \b{iterate the above described process to yield $H_2, H_3$, etc.,} until -- according to the derived bounds -- one of our mutable parameters reach $2H$, \b{i.e., some $m$ such that $H_m \leq 2H, H_{m+1} > 2H$.}  This implies there will be at least $m$ permitted iterations of the process where $m$ is the largest integer smaller than
$h^{-1}H/\mathbb H_{2}^{\dagger}$, i.e., $\textsc{t} \gtrsim H/\mathbb H_{2}^{\dagger}$.
\end{proof}
%
It might be envisioned that going to smaller and smaller time steps will allow for indefinite extension of the simulation times.  While this is true, and the subject of our next proposition, this cannot be proved on the basis of the bounds on the process that have so far been derived. Indeed, on adhering to the above, in the $h\to 0$ limit we would anticipate the bound on $H$ \b{(which is now considered to be a function of time)} provided by
$$
\frac{dH}{dt}  = \mathbb H(H, 0).
$$
However, such an equation \b{may very well} diverge in finite time as indeed would a ``more accurate'' equation/bound involving  all mutable parameters separately.  The needed additional ingredient 
is provided by the convergence to and the properties of the \emph{limiting} Eq.~\eqref{XYZ}.

Since both $h$ and times will be varying in the next proposition, we shall indicate the former by bracketed superscripts and the latter by subscripts indicating macroscopic times.  Thus, e.g., 
$\Psi_{t}^{[h]}$ denotes the \b{(piecewise constant)} function ``$\Psi$'' obtained after $k$ iterations of the process with time step $h$ for time $t$ if we have $hk \leq t < h(k+1)$.

\begin{proposition}
\label{TEE}
Let $ T > 0$ be arbitrary.  Then there exists $h_{ T} > 0$ such that for all 
$h \leq h_{ T}$, the process described by Eq.~\eqref{CVDA} survives at least up till time $T$, \b{i.e., 
$$\sup_{0 < h < h_T} \max_{t \in [0, T]} \|\mathfrak D_t^{[h]}\|_{\infty} < \infty.$$  Thus we may perform} the order of
$h^{-1}T$ iterations.
\end{proposition}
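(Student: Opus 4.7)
The plan is to use the classical solution of the continuum equation Eq.(\ref{GCD}) as an external reference against which to control the discrete iterates, thereby bypassing the self-referential blow-up of the crude bound $dH/dt = \mathbb H(H,0)$. The key observation is that while the estimates derived so far give only a short lifespan $\textsc{t}(\mathfrak D_0)$ before the mutable elements of $\mathfrak D_t^{[h]}$ might run away, the \emph{actual} solution of the continuum equation has parameters bounded uniformly on $[0,T]$, and the JKO iterates cannot stray far from this solution for $h$ small.

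First, I would establish that Eq.(\ref{GCD}), written in the $\Psi = \log N$ variables as the $h \to 0$ form of Eq.(\ref{XMVQ}), is a quasilinear uniformly parabolic equation whose classical solution $N_t$ exists on $[0,T]$ with $\mathfrak D_t^{\text{cont}}$ uniformly bounded by some $K = K(\mathfrak D_0, T)$. This is essentially standard: the leading operator is $\nabla^2$, the coefficients depend smoothly and nonlocally on $\Psi$, and the initial datum $\Psi_0$ lies in $\mathcal D_2$; combined with the positivity bound from Proposition \ref{OGTR} and the Lyapunov property of $\mathscr G_\mu$, one obtains uniform $\mathcal D_2$-control on any finite interval.

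Second, on the initial window $[0, \textsc{t}(\mathfrak D_0)]$ supplied by Proposition \ref{PYBV}, I would compare the discrete iterate $\Psi^{[h]}_{kh}$ to the classical solution $\Psi_{kh}$. Setting $\Delta_k = \Psi^{[h]}_{kh} - \Psi_{kh}$ and subtracting Eq.(\ref{XMVQ}) from its $h \to 0$ form evaluated at the classical solution, one obtains a discrete equation for $\Delta_k$ of the schematic form $(1 - h\nabla^2)\Delta_{k+1} = \Delta_k + h(\text{linear Lipschitz terms in } \Delta) + h\tau_k$, where $\tau_k$ is a local truncation error of size $O(h)$ in $\mathcal D_2$. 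The Lipschitz coefficients are bounded because both $\Psi^{[h]}$ and $\Psi$ have controlled $\mathcal D_2$-norms on the window, and the smoothing of $(1 - h\nabla^2)^{-1}$ is exactly the mechanism already exploited in Proposition \ref{DBFS}. A discrete Gronwall argument then gives $\|\Delta_k\|_{\mathcal D_2} \leq C h e^{c kh}$, so for $h \leq h_T$ sufficiently small, $\mathfrak D^{[h]}_t$ stays within any prescribed neighborhood (say, of size $1$) of the uniformly bounded $\mathfrak D^{\text{cont}}_t$ throughout $[0, \textsc{t}(\mathfrak D_0)]$.

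Third, one restarts and iterates. At time $t = \textsc{t}(\mathfrak D_0)$, the mutable parameters of $\mathfrak D^{[h]}_{\textsc{t}}$ are bounded by $K+1$ rather than by the inefficient iterated bound, so Proposition \ref{PYBV} applied to $N^{[h]}_{\textsc{t}}$ produces an additional lifespan $\textsc{t}(K+1)$ that depends only on $K$ (and not on the restart time). Finitely many restarts — of order $T/\textsc{t}(K+1)$ — cover $[0,T]$; at each restart $h$ must be small enough that the comparison step two is reapplicable, but since only finitely many restarts occur, a single $h_T > 0$ suffices.

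The main obstacle will be step two: establishing $\mathcal D_2$-consistency between the implicit JKO step and the continuum equation is subtle precisely because, as noted before Proposition \ref{DBFS}, $\|h\psi_\star\|_{\mathcal D_2}$ need not tend to zero with $h$. However, the divergent piece $\psi_\star$ is a feature of the regularization $(1 - h\nabla^2)^{-1}$ applied to both the discrete scheme and the continuum equation, so it will cancel in the difference $\Delta$. Making this cancellation explicit — essentially, showing that the implicit scheme is a \emph{stable} first-order approximation in the Fourier-space decomposition that underlies Proposition \ref{DBFS} — is the technical core of the argument.
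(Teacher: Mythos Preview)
Your approach differs genuinely from the paper's. The paper argues by contradiction: assuming some mutable parameter reaches $2H \gg \beta$ (with $\beta$ the sup of the continuum $\mathcal D_2$--norm on $[0,T]$), it extracts a time window on which $\|\Psi_t^{[h]}\|_{\mathcal D_2}$ sits between $H$ and $2H$. Since the weak $\mathcal D_2$--limit of $\Psi_t^{[h]}$ is the continuum $\Psi_t$ with $\|\Psi_t\|_{\mathcal D_2} \leq \beta$, the excess $\mathcal D_2$--mass of $\Psi_t^{[h]}$ must live at high frequencies $|q| > \mathcal Q$. But there the $\psi_\star$--term $\underline{L}_h^{-1}\nabla^2\Psi_t^{[h]}$ dominates every other increment (including $r_\bullet$, $s_\bullet$), so in fact $\|\Psi_{t+h}^{[h]}\|_{\mathcal D_2} < \|\Psi_t^{[h]}\|_{\mathcal D_2}$ throughout the window and $2H$ is never reached. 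This uses the continuum solution only softly (the single number $\beta$); you instead use it as a quantitative reference trajectory in a stability--plus--consistency argument.

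That paradigm could be made to work, but two steps are underspecified. First, the claim that the truncation error $\tau_k$ is $O(h)$ in $\mathcal D_2$ requires, once you unwind it, control of $\nabla^2 F(\Psi_t^{\text{cont}})$ in $\mathcal D_2$, i.e.\ essentially $\Psi_t^{\text{cont}} \in \mathcal D_6$; this is available for $t>0$ by parabolic smoothing but fails at $t=0$ where only $\mathcal D_2$ is assumed, so the comparison must be delayed to a positive time (your restart structure can absorb this, but it has to be said). Second, your ``cancellation of $\psi_\star$ in the difference'' is a statement about \emph{stability} --- the operator $(1-h\nabla^2)^{-1}$ acting on $\Delta$ is contractive mode by mode --- and not about the Lipschitz or consistency pieces. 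The terms coming from $|\nabla\Psi_0|^2$ and $\nabla\psi\cdot\nabla\Psi_0$ produce $r_\bullet$--type cross terms $\underline{L}_h^{-1}(\nabla\Psi\cdot\nabla\Delta)$ whose $\mathcal D_2$--control is exactly the delicate content of Claims A$_1$--A$_3$ and Proposition \ref{DBFS}; they do not reduce to a clean coefficient times $\|\Delta\|_{\mathcal D_2}$. The paper's contradiction route sidesteps both issues because it never needs closeness to the continuum solution at the $\mathcal D_2$ level, only the qualitative fact that a large discrete $\mathcal D_2$--norm forces high--frequency concentration, where the smoothing already analyzed kicks in automatically.
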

\begin{proof}
The proof relies on the fact that the continuous time equation, Eq.~\eqref{GCD} lasts indefinitely and enjoys smoothing properties.  In particular, at positive times the functions $\Psi$ etc., have their $n\text{th}$ derivatives in $L^{1}(\mathbb T_{L}^{d})$ for all $n$ (\cite{esk}), hence all the $\mathcal D_{k}$--norms are finite.  Of course for the purposes of this proof, we are only concerned with the $\mathcal D_{0}$ through $\mathcal D_{2}$ norms and their roles as elements of 
$\mathfrak D$.  

Consider $T > 0$, our fixed macroscopic time.  Let  $0 < t_{0} < \textsc{t}(\mathfrak D_0)$ (as in Proposition \ref{PYBV}) and $t_{1} > T$.  We define
\b{$\alpha$} to be the supremum of the continuous time versions of the relevant
$\mathcal D_{0}$, $\mathcal D_{1}$ and $\mathcal D_{2}$ norms. \b{If the statement of the proposition were false for the time $T$, then there exists a sequence $h_k \rightarrow 0$ and a sequence of times $(t_k)\subseteq [0, T]$ such that $\lim_{k\rightarrow \infty} \|\mathfrak D_{t_k}^{[h_k]}\|_\infty = \infty$.}  Let \b{$H > 2\alpha$} be a quantity like that employed in the proof of Proposition \ref{PYBV} and let us define the times 
$\tau_{H}^{\dag}(h)$ and $\tau_{2H}^{\dag\hspace{-.05 cm}\dag}(h)$:  

 \b{$\bullet$   \hspace{.025 cm} $\tau_{2H}^{\dag\hspace{-.05 cm}\dag}(h)$ is such that at this time, the maximal element of the appropriate $\mathfrak D$, \b{i.e., $\mathfrak D_{\tau_{2H}^{\dag\hspace{-.05cm}\dag}(h)}^{[h]}$,} is less than $2H$, but one time step later, some element of $\mathfrak D_{\tau_{2H}^{\dag\hspace{-.05cm}\dag}(h) + h}^{[h]}$ exceeds $2H$ for the first time in the process;}
 
 \b{$\bullet$  \hspace{.025 cm} $\tau_{H}^{\dag}(h)$ is such that at this time, the maximal element of $\mathfrak D_{\tau_{H}^\dag(h)}^{[h]}$ exceeds $H$, however one time step prior, all the mutable elements of $\mathfrak D_{\tau_H^\dag - h}^{[h]}$ were below $H$.  }
 
 \b{Altogether we certainly have $\tau_H^\dag(h) \leq \tau_{2H}^{\dag\hspace{-.05cm}\dag}(h) + h$; it can further be demonstrated that in fact $\tau_{2H}^{\dagger\dagger}(h) - \tau_H^\dagger(h)$ is of order unity: Returning to the context of the proof of Proposition \ref{PYBV}, let us say that we have $h$ sufficiently small so that uniformly in $h$ $\mathbb H(H, h) \leq \mathbb H_1$ and $\mathbb H(2H, h) \leq \mathbb H_2$ (so that $\mathbb H_1 \leq \mathbb H_2$ by monotonicity of the function $\mathbb H(H, h)$ in $H$). Now we certainly have $\|\mathfrak D_{\tau_H^\dagger}\|_\infty \leq H + h\mathbb H_1$ and $\|\mathfrak D_{\tau_{2H}^{\dagger\dagger}}\|_\infty \geq 2H - h\mathbb H_2$.  Thus if we had chosen $h$ smaller (if necessary) so that $3h\mathbb H_2 \ll H$ then $\|\mathfrak D_{\tau_H^\dagger(h) + h}\|_\infty \leq H + h\mathbb H_1 + h\mathbb H(H + h\mathbb H_1, h) \ll 2H - h\mathbb H_2$ and so the conclusion follows.}
  
 The assumed falsehood of the statement of this proposition implies that these times exist, are well defined and satisfy
$$
\limsup_{h\to 0} \tau_{2H}^{\dag\hspace{-.05 cm}\dag} \leq T.
$$
Thus we have a family of compact intervals 
$[\tau_{H}^{\dag} \hspace{.025 cm}(h), \tau_{2H}^{\dag\hspace{-.05 cm}\dag}(h)]$
which, \b{as established above,} are non--empty and of size uniformly bounded below. Let us start by restricting to a subsequence of $h$'s -- which we will not adorn with further labels -- in which the intersection of these subsequent intervals contains an interval to which we will restrict our attention.  Now, it is emphasized, the totality of all iterations in the subsequence under consideration
is countable.  

In the intersection of the above mentioned regions, the iteration process is certainly viable and hence
the convergence result of Theorem \ref{FVJJ}, item (A) may be applied.  \b{I.e.,  here we have strong convergence to the continuum equation: }
$$\lim_{h\rightarrow 0} \sup_{t \in [0, T]} \|\Psi_t - \Psi_t^{[h]}\|_{\mathcal D_0} = 0 \mbox{~~~and~~~} \lim_{h \rightarrow 0} \|\Psi_t - \Psi_t^{[h]}\|_{\mathcal D_1} = 0.$$
Also $\|\Psi_{t}^{[h]}\|_{\mathcal D_{1}}$ and 
$\|\Psi_{t}^{[h]}\|_{\mathcal D_{2}}$ are bounded so these converge weakly 
(along any $t_{j}\to t, h_{j}\to 0$ subsequence) to their continuum values.  Thus, further restricting the subsequence of $h$'s if necessary, 
$\|\Psi_{t}^{[h]}\|_{\mathcal D_{0}}$, $\|\Psi_{t}^{[h]}\|_{\mathcal D_{1}} < \alpha$ for all $t$ and $h$.  
However, since something in the $\mathfrak D_{t}^{[h]}$'s must be greater than $H$ (since $\|\mathfrak D_t^{[h]}\|_\infty \rightarrow \infty$ by assumption) it is evident that we have
$\|\Psi_{t}^{[h]} \|_{\mathcal D_{2}} >H$.
This implies that these objects are \emph{not} converging strongly in $\mathcal D_{2}$.    

Let us summarize the strategy for the remainder of this proof.  We will show that the purported circumstances imply that among the iterative corrections, the dominant term, by far, is $\psi_{\star}$ 
(c.f., Eq.~\eqref{PU9Y} and Eq.~\eqref{ABFG}).  Thence $\Psi_{t + h}^{[h]}$ is given, in essence, by
$(\Psi^{[h]}_{t + h})_{\star}$ which, we remind the reader, enjoys a reduction in \textit{all} $k\neq 0$
Fourier modes.  So, in particular, we will show
$\|\Psi_{t + h}^{[h]}\|_{\mathcal D_{2}} < \|\Psi_{t}^{[h]}\|_{\mathcal D_{2}}$,
indicating that the time $\tau_{2H}^{\dag\hspace{-.05 cm}\dag}(h)$ is never reached, effecting a contradiction.  \b{Much of reasoning here will be similar to the estimations in Proposition \ref{DBFS} so we shall be succinct.}
In what follows, we shall make statements which, properly speaking hold for all but a finite number of $h$'s and time intervals.  We shall abbreviate by saying ``for all'', automatically going to subsequences if necessary.

Our first claim is that $\|L_{h}^{-1}(\nabla^{2}\Psi_{t}^{[h]})\|_{\mathcal D_{2}}$
(corresponding to the $\psi_{\star}$--term, c.f., Eq.~\eqref{PU9Y})
 is, in essence, indefinitely large.  To this end, let $\mathcal Q$ denote a fixed large number
the necessary size of which will be specified eventually.  If, we suppose, that for infinitely 
many $h$'s, \b{the sum for $\|\Psi_t^{[h]}\|_{\mathcal D_2}$ truncated at $\mathcal Q$ satisfies}
$$
\sum_{|q| \leq \mathcal Q}|q^2\hat{\Psi}_{t}^{[h]}(q)|  > \frac{1}{2}H
$$
then \b{since $H > 2\alpha$} this would imply that any limit of
$\Psi_{t}^{[h]}$ would have $\mathcal D_{2}$--norm in excess of $\alpha$.   
Thus we have, without loss of generality \b{it must be the ``tail'' which diverges, i.e., } for all $h$ and $t$,
\begin{equation}
\label{KLT}
\sum_{|q| > \mathcal Q}|q^2\hat{\Psi}_{t}^{[h]}(q)|  \geq \frac{1}{2}H.
\end{equation}

Now for all $h$ sufficiently small and $\mathcal Q$ fixed, it is clear that
$k^{2}/[1 + hk^{2}] > (\frac{1}{2}\mathcal Q)^{2}$
whenever $k > \mathcal Q$ and thus $\|L_{h}^{-1}(\nabla^{2}\Psi_{t}^{[h]})\|_{\mathcal D_{2}}
\gtrsim H\mathcal Q^{2}$: Indeed, 
\begin{equation}
\label{DDA}
\|L_{h}^{-1}(\nabla^{2}\Psi_{t}^{[h]})\|_{\mathcal D_{2}}
\geq
\sum_{|q| > \mathcal Q}\frac{q^{2}}{1 + hq^{2}} \cdot |q^2\hat{\Psi}_{t}^{[h]}(q)|
\geq
\frac{1}{8}\mathcal Q^{2}H.
\end{equation}
This is deemed to be larger than \b{(the bounds on)} all peripheral terms \b{which consist of all terms in $\psi_\bullet$ except the $r_\bullet$ and $s_\bullet$ terms and also the $c_r$ and $c_s$ terms from Claim $A_2$; all these terms are at most multiples of $H$.}  The only possible difficulties concern the terms $r_{\bullet}$ and $s_{\bullet}$.  According to one scenario, namely \b{$p_{0} > ah^{-\frac{1}{2}}$} (c.f., Claim A$_{1}$ and noting the factor of 2)
these terms could only account for half of the term 
$\|L_{h}^{-1}(\nabla^{2}\Psi_{t}^{[h]})\|_{\mathcal D_{2}}$ and so \b{(sending $\mathcal Q^2$ to $2\mathcal Q^2$ if necessary)} the remainder is more than sufficient 
for all else.  

Otherwise, \b{when $p_0 \leq a h^{-\frac{1}{2}}$} it is recalled \b{(see Claim $A_2$)}
the added \b{$r_{\bullet}$ and $s_{\bullet}$} terms have $\mathcal D_{2}$--norms bounded by the \b{$c_s$ and $c_r$ terms plus the term}
$$
(C_{r} + C_{s})\sum_{q}\frac{1}{1 + hq^{2}} \cdot |q^3\hat{\Psi}_{t}^{[h]}(q)|.
$$
\b{It remains to bound the terms in the last display.}
As for the range $|q| \leq q_{0}$ (where we recall \b{from the proof of Claim $A_3$ that }$q_{0} = 2(C_{r} + C_{s})$) we may bound the corresponding contribution of the above by $8(C_{r} + C_{s})^{3}\|\Psi_{t}^{[h]}\|_{\mathcal D_{1}}$.  \b{By proper choice of $\mathcal Q$, this can be made to be} negligibly small compared 
to $\mathcal Q^{2}H$.
In the range $q_{0} < |q| \leq \mathcal Q$, the terms contributing to 
$\|L_{h}^{-1}(\nabla^{2}\Psi_{t}^{[h]})\|_{\mathcal D_{2}}$ dominate their counterparts in the above display and so we may ignore these differing contributions.  

This leaves us with 
$|q| > \mathcal Q$ where it may be asserted that
$$
\frac{q^{4} - (C_{r} + C_{s})q^{3}}{1 + hq^{2}} \geq
q^{2}\left[(\frac{1}{2}\mathcal Q)^{2} - (C_{r} + C_{s})\mathcal Q\right].
$$
\b{The previous expression comes directly from Eq.~\eqref{CMRI}: By Eq.~\eqref{DDA}, it cannot be the case that \emph{everything} is bounded by multiples of $H$, so in the context of Claim $A_3$, we are in the case where $\|h\psi_\star\|_{\mathcal D_2} > (C_r + C_2) \|h L_h^{-1} \Psi_0\|_{\mathcal D_3}$.}
This leaves us an overall \textit{excess} at least as large as
$\left[(\frac{1}{2}\mathcal Q)^{2} - (C_{r} + C_{s})\mathcal Q\right]\times \frac{1}{2}H$ \b{(the last expression comes from the previous display and Eq.~\eqref{KLT}}).  It is thus seen that for $\mathcal Q$ chosen to be large enough, the increment for
$\|\Psi_{t}^{[h]}\|_{\mathcal D_{2}}$ on each step of the iteration is negative \b{(we again remind the reader of Eq.~\eqref{ABFG} and the discussions immediately following)}
\b{and so indeed $\|\psi_{t+h}^{[h]}\|_{\mathcal D_2} < \|\psi_t^{[h]}\|_{\mathcal D_2}$}.  
\end{proof}

We can now extend Corollary \ref{CEB} to arbitrary macroscopic times:

\begin{corollary} \label{CEB2}
Consider Eq.~\eqref{CVDA} with all elements of $\mathfrak D_0$ finite and let $T > 0$.  Then there is some $h_T = h_T(\mathfrak D_0)$ such that for all $h < h_T$ and $t < T$:

i) There is a classical solution $N_t^{[h]} =\mbox{e}^{\Psi_t^{[h]}}$ which is bounded below; 

\textcolor{blue}{ii) $\|\Psi_{t+h} - \Psi_t\|_{\mathcal D_0} \leq \mathfrak{b}_0 h$;}

\textcolor{blue}{iii) $\|N_{t+h}^{[h]} - N_t\|_{\infty}$ and $\|\Psi_{t+h}^{[h]}\|_{\mathcal D_1} - \|\Psi_t^{[h]}\|_{\mathcal D_1}$ are bounded from above by a constant depending only on the elements of $\mathfrak D_t$ times $h$.}

\noindent In the above, the notation $N_t^{[h]}$ etc., is as in the proof of Theorem \ref{FVJJ}.
\end{corollary}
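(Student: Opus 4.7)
The plan is to promote the one-step estimates of Corollary \ref{CEB} to uniform-in-time estimates over $[0,T]$ by extracting a uniform bound on the mutable elements of $\mathfrak D_t^{[h]}$ from the proof of Proposition \ref{TEE}. Interpreting the statement with the evident typographical correction $\|\Psi_{t+h}^{[h]}\|_{\mathcal D_1} - \|\Psi_t^{[h]}\|_{\mathcal D_1}$ (in analogy with Corollary \ref{CEB}), the content reduces to an iterated application of the one-step result with uniformly controlled constants.

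First I would invoke Proposition \ref{TEE} to obtain $h_T > 0$ such that for all $h < h_T$ the iteration survives for at least $\lfloor T h^{-1} \rfloor$ steps. The key observation is that the contradiction argument in the proof of that proposition does not merely show the process survives until time $T$; it shows that no mutable element of $\mathfrak D_t^{[h]}$ can ever reach the threshold $2H$, for an appropriately chosen $H = H(\mathfrak D_0, T)$. Consequently, setting $\mathscr H_T := 2H$, the mutable elements of $\mathfrak D_t^{[h]}$ are uniformly bounded by $\mathscr H_T$, uniformly in $t \in [0,T]$ and $h < h_T$.

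Next, for any $t \in [0,T)$ with $kh \leq t < (k+1)h$, apply Corollary \ref{CEB} to the single step from time $kh$ to time $(k+1)h$, treating $\mathfrak D_{kh}^{[h]}$ as the initial configuration $\mathfrak D_0$ appearing there. Since the elements of $\mathfrak D_{kh}^{[h]}$ are bounded by $\mathscr H_T$, the hypotheses are met with parameters uniformly controlled in terms of $\mathscr H_T$, and the conclusions give (i) the existence of a classical solution $N_{(k+1)h}^{[h]} = \mathrm{e}^{\Psi_{(k+1)h}^{[h]}}$ that is bounded below, and (ii) the increment bounds with a constant depending only on $\mathscr H_T$, hence only on $\mathfrak D_0$ and $T$.

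For the uniform positive lower bound on $N_t^{[h]}$ over all of $[0,T]$, note that the $\mathcal D_0$ norm controls the sup-norm: $\|\Psi_t^{[h]}\|_{L^\infty} \leq \|\Psi_t^{[h]}\|_{\mathcal D_0} \leq \mathscr H_T$, so $N_t^{[h]}(x) = \mathrm{e}^{\Psi_t^{[h]}(x)} \geq \mathrm{e}^{-\mathscr H_T}$ uniformly in $x$ and $t$. The only delicate point is confirming that the argument for Proposition \ref{TEE} indeed supplies a uniform upper bound on the mutable elements of $\mathfrak D_t^{[h]}$ throughout $[0,T]$, rather than a weaker survival statement; but this is precisely the form of the contradiction there (ruling out any time $\tau_{2H}^{\dag\hspace{-.05 cm}\dag}(h) \leq T$), and so the corollary follows.
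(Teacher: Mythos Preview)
Your proof is correct and follows essentially the same approach as the paper, which dismisses the argument in a single line as proceeding \emph{mutatis mutandis} from Corollary~\ref{CEB} once Proposition~\ref{TEE} is in hand. You have usefully made explicit the key point the paper leaves implicit: that the contradiction in Proposition~\ref{TEE} yields a uniform bound on the mutable elements of $\mathfrak D_t^{[h]}$ over $[0,T]$, which is precisely what allows the one-step constants of Corollary~\ref{CEB} to be controlled uniformly.
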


\begin{proof}
\b{With the results of Proposition \ref{TEE} etc., in hand, the proof of i) and ii) follow \emph{mutatis mutantis} from the proof of Corollary \ref{CEB} whereas item iii) is simply Proposition \ref{XTY} stated for arbitrary $t < T$. }
\end{proof}

\section{Appendix B}
In this appendix -- which is not \textit{essential} for this work but is requisite for completeness -- we
present the basic properties of the distance function on $\mathcal B\times \mathcal B$ (particularly that it actually is a distance).
\b{Our result, concerning the realization of the minimization program defining the distance, is in the spirit of \cite{BB}:}

\begin{proposition}
\label{XVAB}
For $ N_{0}, N_{1} \in \mathcal B$, consider $\mathbb D^{2}(N_{0}, N_{1})$ as given in 
Eq.~\eqref{BDF}.  Then the infimum in this equation is achieved by minimizing among velocity fields that are derived from potentials.
\end{proposition}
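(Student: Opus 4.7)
The strategy is a Benamou--Brenier-style projection argument: given any admissible velocity field $V \in \mathscr V(N_0, N_1)$ driving a path $N_t$ from $N_0$ to $N_1$, I will replace $V$ pointwise in $t$ by a gradient field $-\nabla Q_t$ that drives the same path $N_t$ and carries no larger inner product at each time, so that integration gives $\int_0^1 \langle \hspace{-.1cm} \langle -\nabla Q_t, -\nabla Q_t \rangle \hspace{-.1cm} \rangle_{N_t} dt \leq \int_0^1 \langle \hspace{-.1cm} \langle V, V \rangle \hspace{-.1cm} \rangle_{N_t} dt$. Admissibility of the pair $(N_t, -\nabla Q_t)$ then forces the infimum to be attained on gradients.

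First, construction of $Q_t$. The tangent vector $\partial_t N_t$ is prescribed by $V$ via the continuity constraint in Eq.\eqref{BDF}. Since $N_t$ is bounded below uniformly along the path (built into $\mathcal B$ and controlled through $\Omega_{N_t} \geq N_t^{1/2}$ from Eq.\eqref{CQGR}), the bilinear form $\langle \hspace{-.1cm} \langle \nabla \cdot, \nabla \cdot \rangle \hspace{-.1cm} \rangle_{N_t}$ defines a genuine Hilbert inner product on $\mathcal H_{N_t}$, and the map $Q \mapsto \nabla\cdot(N_t \nabla Q) - \Omega_{N_t} Q$ is an isomorphism onto $\mathcal H_{N_t}^{-1}$ by Riesz representation. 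This produces a unique $Q_t \in \mathcal H_{N_t}$ solving $\partial_t N_t = \nabla\cdot(N_t \nabla Q_t) - \Omega_{N_t} Q_t$; adjusting the constant $\kappa(-\nabla Q_t)$ in the definition of $\mathbb I_\nabla$ appropriately, $-\nabla Q_t$ lies in $\mathscr V(N_0, N_1)$ and produces precisely the same trajectory $N_t$.

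Second, the energy comparison. Write $V = -\nabla Q_t + W_t$. Since both $V$ and $-\nabla Q_t$ give the same $\partial_t N_t$, the residual $W_t$ is a null perturbation: weakly, $\nabla \cdot(N_t W_t) + \Omega_{N_t} \mathbb I_\nabla W_t = 0$. Expanding
\[ \langle \hspace{-.1cm} \langle V, V \rangle \hspace{-.1cm} \rangle_{N_t} = \langle \hspace{-.1cm} \langle -\nabla Q_t, -\nabla Q_t \rangle \hspace{-.1cm} \rangle_{N_t} + 2 \langle \hspace{-.1cm} \langle -\nabla Q_t, W_t \rangle \hspace{-.1cm} \rangle_{N_t} + \langle \hspace{-.1cm} \langle W_t, W_t \rangle \hspace{-.1cm} \rangle_{N_t}, \]
the last term is nonnegative by the form of Eq.\eqref{SDOU}. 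The cross term is shown to vanish by integration by parts: the $\int N_t (-\nabla Q_t) \cdot W_t\, dx$ piece transforms into $\int Q_t \nabla\cdot(N_t W_t)\, dx$, which combines with $\int \Omega_{N_t} Q_t\, \mathbb I_\nabla W_t\, dx$ (using that $\mathbb I_\nabla(-\nabla Q_t) = -Q_t$ modulo the chosen constant) to yield, up to sign, a multiple of $\int Q_t[\nabla\cdot(N_t W_t) + \Omega_{N_t} \mathbb I_\nabla W_t]\, dx = 0$ by the nullity relation for $W_t$.

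The main obstacle is the consistent bookkeeping of the adjustable constant $\kappa(V)$ in $\mathbb I_\nabla$ and the verification that $-\nabla Q_t$ actually drives the identical $N_t$ appearing in the norm. This is handled by absorbing constant modes into $Q_t$ and interpreting everything weakly in $\mathcal H_{N_t}^{-1}$ throughout, so that integration by parts carries no leftover zero-mode contributions on the torus. Once this is in place, a routine measurable-selection argument assembles the pointwise-in-$t$ choices of $Q_t$ into a time-dependent field with the integrability required in Eq.\eqref{GBVH}.
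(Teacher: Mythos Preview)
Your proposal is correct and follows essentially the same approach as the paper: both arguments construct a potential $Q_t$ (the paper calls it $\phi$) driving the identical path $N_t$, then exploit the orthogonality $\langle\hspace{-.1cm}\langle -\nabla Q_t, V + \nabla Q_t\rangle\hspace{-.1cm}\rangle_{N_t} = 0$ (obtained by testing both continuity equations against $Q_t$) to conclude $\langle\hspace{-.1cm}\langle V,V\rangle\hspace{-.1cm}\rangle_{N_t} \geq \langle\hspace{-.1cm}\langle \nabla Q_t,\nabla Q_t\rangle\hspace{-.1cm}\rangle_{N_t}$ pointwise in $t$. Your presentation via the decomposition $V = -\nabla Q_t + W_t$ with $W_t$ a null perturbation is just a relabeling of the paper's computation, and the extra remarks on $\kappa$ and measurable selection are ancillary details the paper leaves implicit.
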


\begin{proof}
Let $N_{t}$ denote a path in $\mathcal B$ from $N_{0}$ to $N_{1}$ as described in Eq.~\eqref{BDF} which we suppose is driven by fields \b{$(V, Q) \in \mathscr V({N_{0,}N_{1}})$:}
\b{$$
\frac{\partial N_{t}}{\partial t} + \nabla \cdot (N_{t}V) = -\Omega_{N_{t}}Q.
$$}
Now let $\phi$ denote a velocity potential which also produces the path $N_{t}$ (as in the derivations following Eq.~\eqref{BDF}):
$$
\frac{\partial N_{t}}{\partial t}  =  \nabla \cdot (N_{t}\nabla \phi) - \Omega_{N_{t}}\phi.
$$
Multiplying both of the above by $-\phi$ and integrating by parts, we have, for a.e. \hspace{-.1 cm}$t$, 
\begin{equation}\label{PUNN}\b{
\langle \hspace{-.1cm} \langle  (-\nabla \phi, \phi), (-\nabla \phi, \phi) \rangle  \hspace{-.1cm}  \rangle_{N_{t}}
=
\langle \hspace{-.1cm} \langle  (- \nabla \phi, \phi), (V, Q) \rangle  \hspace{-.1cm}  \rangle_{N_{t}}.
}\end{equation}
I.e., the difference between $(V, Q)$ and $(-\nabla\phi, \phi)$ is orthogonal to $(-\nabla \phi, \phi)$.  Now
\[ \b{\langle \hspace{-.1cm} \langle  (V + \nabla \phi, Q -\phi),
(V + \nabla \phi, Q-\phi) \rangle  \hspace{-.1cm}  \rangle_{N_{t}} \geq 0.}\]
Expanding the above and using Eq.~\eqref{PUNN}, we conclude 
\[ \b{\langle \hspace{-.1cm} \langle  (V, Q), (V, Q) \rangle  \hspace{-.1cm}  \rangle_{N_{t}} \geq \langle \hspace{-.1cm} \langle  (-\nabla \phi, \phi), (-\nabla \phi, \phi) \rangle  \hspace{-.1cm}  \rangle_{N_{t}}. } \]
Thence, at least from the perspective of a minimization program, we \b{may restrict attention to} gradient fields.  
\end{proof}

Here we establish the so--called indiscernible property of 
$\mathbb D(\cdot,\cdot)$ as stated below.  In what follows, we will actually make use of the finite range assumption on $W(\cdot)$.

\begin{proposition}
\label{ADA}
Let $N_{0}, N_{1} \in \mathcal B$ with 
$N_{0}\neq N_{1}$.  Then
$$
\mathbb D^{2}(N_{0}, N_{1})  \neq 0.
$$
\end{proposition}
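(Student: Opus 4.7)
The plan is to argue by contradiction via a weighted Cauchy--Schwarz inequality.  By Proposition \ref{XVAB} one may restrict attention to gradient driving fields $V = -\nabla \phi$.  For any smooth test function $\psi$, pairing $\psi$ against the advective equation and integrating by parts yields
\[
\int_{\mathbb T_L^d} \psi (N_1 - N_0) \, dx = -\int_0^1\!\!\int_{\mathbb T_L^d} \bigl[N_t (\nabla \psi \cdot \nabla \phi) + \Omega_{N_t} \psi \phi\bigr] dx \, dt.
\]
Applying pointwise Cauchy--Schwarz in the ``energy'' inner product, then integrating in time, gives
\[
\left|\int_{\mathbb T_L^d} \psi (N_1 - N_0) \, dx\right|^2 \le K_\psi \cdot \int_0^1 \langle\!\langle \nabla \phi, \nabla \phi \rangle\!\rangle_{N_t} \, dt,
\]
where $K_\psi := \int_0^1\!\!\int_{\mathbb T_L^d} \bigl[N_t |\nabla \psi|^2 + \Omega_{N_t} \psi^2\bigr] dx\,dt$.

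The crux of the argument is to show that for admissible paths of bounded energy, $K_\psi$ is bounded by some $K_\psi^\star$ depending only on $\psi$, $N_0$, and the parameters of the problem.  For this, I would use the mass evolution identity $\tfrac{d}{dt}\|N_t\|_{L^1} = -\int \Omega_{N_t}\phi \, dx$ together with the Cauchy--Schwarz estimate $|\int \Omega_{N_t}\phi \, dx| \le \sqrt{\|\Omega_{N_t}\|_{L^1}} \cdot \sqrt{\int \Omega_{N_t}\phi^2\,dx}$ to get
\[
\|N_t\|_{L^1}^2 \;\lesssim\; \|N_0\|_{L^1}^2 + \text{(energy)}\cdot \int_0^t\|\Omega_{N_s}\|_{L^1}\,ds .
\]
Since $W$ has finite range, $w_{N_t}$ is pointwise controlled by a local $L^1$ mass, and a direct inspection of Eq.~(\ref{XXZ}) shows the essentially linear bound $\Omega_N \le c_1 N + c_2$ with constants depending only on $\mu$ and $\|w_N\|_\infty$.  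A Gr\"onwall argument then bounds $\sup_{t \in [0,1]} \|N_t\|_{L^1}$ (and hence $\sup_t \|\Omega_{N_t}\|_{L^1}$) in terms of $\|N_0\|_{L^1}$ and the energy, which in turn bounds $K_\psi$ uniformly.

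With this control in hand, suppose $\mathbb D(N_0, N_1) = 0$ and take a sequence $\phi^{(n)}$ of gradient potentials along admissible paths whose energies tend to zero.  Passing to the limit in the Cauchy--Schwarz bound above forces $\int \psi (N_1 - N_0)\, dx = 0$ for every smooth $\psi$, contradicting $N_0 \ne N_1$ since $N_1 - N_0 \in L^2(\mathbb T_L^d)$.  The main obstacle is the uniform control of $K_\psi$: it is precisely here that the finite range hypothesis on $W$ and the at-most-linear growth of $\Omega_N$ in $N$ are essential, since without these, $\Omega_{N_t}$ along a general path could in principle blow up along the time interval even for paths with arbitrarily small energy.
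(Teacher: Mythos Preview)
Your overall strategy---test against a smooth $\psi$, Cauchy--Schwarz in the weighted inner product, then control the ``constant'' $K_\psi$ uniformly along a minimizing sequence---is exactly the route the paper takes.  The one step that is not yet justified is the sentence ``A Gr\"onwall argument then bounds $\sup_{t}\|N_t\|_{L^1}$.''  The difficulty is circular: your ``essentially linear'' bound $\Omega_N \le c_1 N + c_2$ has $c_1,c_2$ proportional to $e^{\frac12|\mu - w_N|}$, and $\|w_{N_t}\|_\infty$ is itself controlled only through $\|N_t\|_{L^1}$ (or, via finite range, through a supremum of local masses).  So the differential inequality you actually obtain for $M(t):=\|N_t\|_{L^1}$ is of the form
\[
|M'(t)|\;\le\;\varepsilon_k(t)\,C\,e^{\,cM(t)/2}\sqrt{M(t)+L^d},
\]
which is genuinely nonlinear in $M$ and blows up in finite $\int\varepsilon_k$--time.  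Linear Gr\"onwall does not apply; what saves the argument is that $\int_0^1\varepsilon_k(t)\,dt\to 0$ along the minimizing sequence, so a nonlinear comparison (equivalently, an explicit continuity/bootstrap: assume $M(s)\le 2M(0)$ on $[0,t]$, derive $M(t)\le M(0)+o_k(1)$, conclude the assumption persists on all of $[0,1]$ for $k$ large) closes the loop.  That bootstrap is the missing ingredient in your sketch.

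The paper's proof performs exactly this bootstrap, but on \emph{local} masses rather than the global one: it tracks $p_{t,k}(x_0):=\int \varphi_{x_0}N_t^{(k)}\,dx$ for a bump $\varphi_{x_0}$ living on a ball of twice the interaction range, uses the finite-range hypothesis to bound $|w_{N_t}(x_0)|\le w_0\,p_{t,k}(x_0)$ pointwise, and runs the continuity argument on $\sup_x p_{t,k}(x)$ (this is the quantity $t^\flat_k$ in the paper).  Only after the local masses are pinned does it pass to total mass and then to an arbitrary test function $\eta$.  Your global-mass variant is a legitimate simplification---and in fact does not really use finite range, only $W\in L^\infty$---but you must replace the word ``Gr\"onwall'' by an explicit small-energy bootstrap to break the circularity between $\|N_t\|_{L^1}$ and the constants hidden in $\Omega_{N_t}$.
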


\begin{proof}
Assuming $\mathbb D^{2}(N_{0}, N_{1}) = 0$, 
let $N_{t}^{(k)}$ be a minimizing sequence of paths in $\mathcal B$ 
connecting $N_{0}$ and $N_{1}$.  We denote by $\Psi^{(k)}_{t}$
the associated driving potentials.  By our assumption, it is the case that
$\varepsilon_{k}(t)$ defined by
\begin{equation}\label{EIRKK}
\varepsilon_{k}^{2}(t)  : = - \int_{\mathbb T_L^d}\Psi^{(k)}_{t}\frac{\partial N_{t}^{(k)}}{\partial t}~dx
= \langle \hspace{-.1cm} \langle
\nabla \Psi^{(k)}_{t}, \nabla \Psi^{(k)}_{t}
\rangle  \hspace{-.1cm}  \rangle_{N_{t}}
\end{equation}
satisfies
$$
0 = \lim_{k\to\infty}\int_{0}^{1}\varepsilon^{2}_{k}(t)~dt.
$$
\b{The idea is then to estimate the mass evolution of $N_t^{(k)}$ using the equation to eventually arrive at the conclusion that $N_0 = N_1$.}

\b{We start by defining a ``localized'' mass of $N$.  }For $x_{0}\in \mathbb T_{L}^{d}$, let $B_{a}(x_{0})$ denote the ball of radius $a$ centered at $x_{0}$ where $a$ denotes the interaction radius of $W$.    
Let $\varphi(x)$ denote any positive $\mathscr C^{2}$ function which is identically one on $B_{a}(0)$ and decreases  to zero  outside, specifically in 
$B_{2a}(0)\setminus B_{a}(0)$.  
For brevity, we use
$\varphi_{x_{0}}(x) := \varphi (x - x_{0})$.  For $N\in \mathcal B$ we will write
$$
p_{N}(x_{0})  :=  \int_{\mathbb T_{L}^{d}}\varphi_{x_{0}}N~dx
$$
which, it is noted, is an upper bound on 
the $N$--measure of 
$B_{a}(x_{0})$ (and a lower bound on the $N$--measure of
$B_{2a}(x_{0})$).  
Moreover, it is noted that 
$p_{N}(x)$ is a continuous function of $x$.

For $x\in \mathbb T_{L}^{d}$, $t\in [0,1]$ and $k$ an integer let us abbreviate
$p_{N_{t}^{(k)}}(x)$ by $p_{t, k}(x)$.  It is observed that (for fixed $k$) 
$p_{t,k}(x)$ is a continuous function on $[0,1] \times \mathbb T_{L}^{d}$.
Indeed, for fixed $x_{0}$ we can estimate the evolution of 
$p_{t,k}(x_{0})$.
We have
$$
-\frac{d }{dt}p_{t,k}(x_{0})  =  
\langle \hspace {-.1cm} \langle \nabla \Psi_{t}^{(k)}, \nabla \varphi_{x_{0}}
\rangle  \hspace{-.1cm}  \rangle_{N_{t}^{(k)}}
$$
so that by Eq.~\eqref{EIRKK} and Cauchy--Schwarz, 
\begin{equation}\label{DKRRR}
\left |
\frac{d }{dt}p_{t,k}(x_{0})
\right |
 \leq \varepsilon_{k}(t)
\langle \hspace{-.1cm}\langle \nabla \varphi_{x_{0}} , \nabla \varphi_{x_{0}}
\rangle\hspace{-.1cm}\rangle_{N_{t}^{(k)}}^{\frac{1}{2}}
\end{equation}
for a.e.~$t$.  We now examine 
$\langle\hspace{-.1cm}\langle \nabla \varphi_{x_{0}} , \nabla \varphi_{x_{0}}
\rangle\hspace{-.1cm}\rangle_{N_{t}^{(k)}} = \int_{\mathbb T_L^d} N_t^{(k)} |\nabla \varphi_{x_0}|^2 + \Omega_{N_t^{(k)}} \varphi_{x_0}^2~dx$.  As for the gradient term, let us write  
$|\nabla \varphi |^2 \leq g_{\varphi} \varphi$ for some constant $g_\varphi$
and so
$$
\int_{\mathbb T_L^d}
N_{t}^{(k)}|\nabla \varphi_{x_{0}}|^{2}
~dx  \leq  g_\varphi\cdot p_{t,k}.
$$

For the second term, we first claim that 
\[ \Omega_N \leq \text{e}^{\frac{1}{2}|\mu - w_N|} \cdot \frac{1}{2} (1 + N).\]
Indeed, writing $\Omega_N = N^{\frac{1}{2}} \sinh (\frac{1}{2} \Phi_N)/\frac{1}{2} \Phi_N \leq N^{\frac{1}{2}} \cosh \frac{1}{2} \Phi_N$, the result follows immediately.  Also, for $N$ fixed,  we have
$$
|w_N(x_{0})| \leq w_0\cdot p_{N}(x_{0}),
$$
with $w_0$ being the $\mathscr C^0$--norm of $W$.  Indeed, 
$$
|w_N(x_{0})|  =  \left|\int_{\mathbb T_L^d} W(x_{0} - y)N_{t}(y) ~dy\right|
\leq \int_{\mathbb T_L^d} |W(x_{0} - y)| N_{t}(y)~dy  \leq w_{0}N_{t}\left[B_{a}(x_{0})\right]
$$
and we conclude by recalling that $p_N(x_0)$ is an upper bound of $N_t\left[B_a(x_0)\right]$.  The previous two observations then yield the preliminary estimate
$$
\int_{\mathbb T_L^d} \Omega_{N_{t}^{(k)}}\varphi_{x_{0}}^{2}~dx
\leq
\frac{1}{2}\int_{\mathbb T_L^d} \text{e}^{\frac{1}{2}|\mu - w_N|} \cdot 
(1+N_t^{(k)})\varphi_{x_{0}}^{2} ~dx
\leq
\frac{1}{2}\text{e}^{\frac{1}{2}\mu}\int_{\mathbb T_L^d}\text{e}^{\frac{1}{2}w_{0} p_{t,k}} \cdot (1+N_t^{(k)}) \varphi_{x_{0}}^{2}
~dx.
$$

Now, consider $I_{0}$ defined by
$$
I_{0} := \max_{x\in\mathbb T_{L}^{d}} p_{k, 0}  \equiv 
\max_{x\in\mathbb T_{L}^{d}}\int_{\mathbb T_L^d}N_0 \varphi_{x}~dx
$$
which is manifestly independent of $k$. It is clear that for any given $x$ if we define 
$$
t_k^{\sharp}(x)  := \sup\{t\in [0,1]  \mid p_{k,t}(x) < 2I_{0} \}
$$
then $t_k^{\sharp}(x) > 0$ (indeed, we have the above explicit bound on $|\frac{d}{dt} p_{t, k}|$).  Moreover, it can easily be established 
using the continuity of $p_{k,\cdot}(\cdot)$ that
$$
t^{\flat}_{k}  :=  \inf_{x\in\mathbb T_{L}^{d}} t_k^{\sharp}(x)
$$
is strictly positive.  But \textit{a priori} $t^{\flat}_{k}$ is not necessarily uniformly positive in $k$; notwithstanding we will show,
under the hypothesis $\mathbb D^{2}(N_{0}, N_{1}) = 0$,
that for all $k$ sufficiently large, 
$t^{\flat}_{k} \equiv 1$.

Indeed, provided $t < t^{\flat}_{k}$, we may estimate the final term in the estimate prior to the definition of $I_{0}$ as follows:
\begin{equation}\label{EKRHH}
\int_{\mathbb T_L^d} \Omega_{N_{t}^{(k)}}\varphi_{x_{0}}^{2}~dx
\leq
\frac{1}{2}\text{e}^{\frac{1}{2}|\mu|+w_0I_{0}}
\int_{\mathbb T_L^d}(1 + N_t^{(k)})\varphi_{x_{0}}^{2}~dx 
\leq
c_{1}\text{e}^{c_{2}I_{0}}(c_{3} + I_{0})
\end{equation}
for finite constants $c_{1} \dots c_{3}$ which do not depend on $k$ or $t$.
So, recalling Eq.~\eqref{DKRRR}, we may write, for $t < t^{\flat}_{k}$, 
\[\begin{split}
p_{k,t}(x_{0})  &\leq 
p_{k,0}(x_{0}) +
[c_{4}I_{0} + c_{1}\text{e}^{c_{2}I_{0}}(c_{3} + I_{0})]^{\frac{1}{2}}
\cdot\b{\int_{0}^{t}\varepsilon_{k}(s) ~ds},
\end{split}\]
with $c_{4}$ similar to the above $c$'s \b{corresponding to the $|\nabla \varphi_{x_0}|^2$ term}.  It is also noted that the first term on the right is independent of $k$ and bounded by $I_{0}$.
Next let $\gamma_{k}$ be defined by 
$$
\left[c_{4}I_{0} + c_{1}\text{e}^{c_{2}I_{0}}(c_{3} + I_{0})\right]^{\frac{1}{2}}
\cdot\int_{0}^{1}\varepsilon_{k}(t) dt := \gamma_{k}I_{0}
$$
where it is noted that the upper limit of the integration is $t = 1$.  
We have, for all $k$ sufficiently large that $\gamma_{k} < 1$ (since $\lim_{k \rightarrow \infty} \int_0^1 \e^2_k(t)\rightarrow 0$) and we have  
at $t = t^{\flat}_{k}$ that for any $x$,
$$
p_{k, t^{\flat}_{k}}  \leq  I_{0}(1 + \gamma_{k}) \b{< 2I_0}
$$
which necessitates $t^{\flat}_{k} = 1$.  

We note from Eq.~\eqref{EKRHH} that 
$\Omega_{N_{t,k}}$ is bounded by 
$\b{\frac{1}{2}}\text{e}^{\frac{1}{2}|\mu|}\text{e}^{w_{0}I_{0}}(1+N_{t,k})$, i.e., for any positive (and, e.g., $\mathscr C^{2}$) function $f$,
$$
\int_{\mathbb T_L^d} \Omega_{N_{t,k}}f~dx \leq
\b{\frac{1}{2}}\text{e}^{\frac{1}{2}|\mu|}\text{e}^{w_{0}I_{0}}\int_{\mathbb T_L^d} (1+ N_{t,k})f~dx.
$$
In particular, with $f \equiv 1$ we find that the total mass $\mathbb M_{t,k}$
satisfies the differential inequality
$$
\frac{d\mathbb M_{t,k}}{dt}= \int_{\mathbb T_L^d} \Omega_{N_t} \Psi_t~dx\leq
\b{\frac{1}{\sqrt 2}\text{e}^{\frac{1}{4}|\mu|}\text{e}^{\frac{1}{2}w_{0}I_{0}}}
\left[L^{d} + \mathbb M_{t,k}\right]^{\frac{1}{2}} \cdot \varepsilon_{k}(t).
$$
\b{Since $t_k^\flat = 1$, certainly $p_{t,k}(x) < 2I_0$, so we have that e.g., $\mathbb M_{t, k} \leq 2\frac{L^d}{|B_a(0)|} \cdot 2I_0$.}
Therefore, defining 
$$
\vartheta_{k} := \int_{0}^{1}\varepsilon_{k}(t)~dt \propto \gamma_{k}
$$
(and so $\vartheta_{k} \to 0$ as $k\to\infty$)
we learn
\begin{equation}\label{DKRIO}
\mathbb M_{t,k}  \leq  \mathbb M_{0} + c \cdot \vartheta_{k}
\end{equation}
where $c$ is another constant depending on $N_{0}$, the total volume and other particulars but is independent of $k$ and $t$.

The proof is now easily finished.  Let $\eta$ denote any
$\mathscr C^{2}$ function.
Then for any $k$,
$$
\int_{\mathbb T_L^d} (N_{0} - N_{1})\eta~dx  = 
\b{\int_0^1} \langle\hspace{-.1cm}\langle\nabla \Psi_{k,t}, \nabla \eta
\rangle\hspace{-.1cm}\rangle_{N_{t,k}}~dt.
$$
\b{The right hand side can easily be bounded:}
\[ \begin{split}
\int_0^1 \langle\hspace{-.1cm}\langle\nabla \Psi_{k,t}, \nabla \eta
\rangle\hspace{-.1cm}\rangle_{N_{t,k}}~dt &= \int_{\mathbb T_L^d}  (\nabla \Psi_t \cdot \nabla \eta)N_t + \int \Omega_{N_t} \Psi_t \eta~dxdt\\
&\leq \|\eta\|_{\mathscr C^1}^{\frac{1}{2}} \mathbb M_{t, k}^{\frac{1}{2}} \cdot \e_k(t) +  \|\eta\|_{\mathscr C^0} \left[L^d + \mathbb M_{t, k}\right]^{\frac{1}{2}}\cdot \e_k(t).
\end{split}
\]
\b{Eq.~\eqref{DKRIO} gives the necessary bound for $\mathbb M_{t, k}$ and so letting $k\to\infty$, we learn}
$
\int \eta ~dN_{0} = \int \eta ~dN_{1}
$
which, since $\eta$ is arbitrary, establishes $N_{0} = N_{1}$
\end{proof}

\begin{thm} The function $\mathbb D(\cdot,\cdot)$ defines a distance on $\mathcal B$.
\end{thm}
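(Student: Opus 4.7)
The plan is to verify the four defining properties of a metric in turn, since by this point much of the heavy lifting has already been done in preceding propositions.

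First, non--negativity is immediate from the definition in Eq.\eqref{GBVH}: the integrand $\langle\hspace{-.1cm}\langle V,V\rangle\hspace{-.1cm}\rangle_{N_{t}}$ is manifestly non--negative by Eq.\eqref{VBPL}, since $N_{t}>0$ and $\Omega_{N_{t}}>0$. Also, $\mathbb D(N,N)=0$ for every $N\in\mathcal B$ because the constant path $N_{t}\equiv N$ driven by $V\equiv 0$ lies in $\mathscr V(N,N)$ and yields zero cost. The non--trivial half of the indiscernible property --- namely $\mathbb D(N_{0},N_{1})=0\Rightarrow N_{0}=N_{1}$ --- is precisely the content of Proposition \ref{ADA}, so it may simply be invoked.

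For symmetry, given any $V\in\mathscr V(N_{0},N_{1})$ driving the path $N_{t}$ from $N_{0}$ to $N_{1}$ on $[0,1]$, I would consider the reversed path $\widetilde N_{s}:=N_{1-s}$ with driving field $\widetilde V_{s}:=-V_{1-s}$ and $\kappa(\widetilde V_{s}):=-\kappa(V_{1-s})$. A direct check using the definition of $\mathbb I_{\nabla}$ shows that $\widetilde V\in\mathscr V(N_{1},N_{0})$, and since $\langle\hspace{-.1cm}\langle\widetilde V,\widetilde V\rangle\hspace{-.1cm}\rangle_{\widetilde N_{s}}=\langle\hspace{-.1cm}\langle V,V\rangle\hspace{-.1cm}\rangle_{N_{1-s}}$, the integral is preserved. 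Taking infima in both directions yields $\mathbb D(N_{0},N_{1})=\mathbb D(N_{1},N_{0})$.

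For the triangle inequality, I would exploit the time--rescaled formulation
\[\mathbb D^{2}(N_{0},N_{1})=\inf_{V\in\mathscr V_{T}(N_{0},N_{1})} T\int_{0}^{T}\langle\hspace{-.1cm}\langle V,V\rangle\hspace{-.1cm}\rangle_{N_{t}}\,dt,\]
which is noted in the paper immediately after Eq.\eqref{GBVH}. Given $N_{0},N_{1},N_{2}\in\mathcal B$ and $\varepsilon>0$, pick $V^{(1)}\in\mathscr V_{T_{1}}(N_{0},N_{1})$ and $V^{(2)}\in\mathscr V_{T_{2}}(N_{1},N_{2})$ which are $\varepsilon$--near the respective infima, where the parameters $T_{1},T_{2}$ are chosen proportional to $\mathbb D(N_{0},N_{1})$ and $\mathbb D(N_{1},N_{2})$ respectively (the standard geodesic reparametrization trick). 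Concatenating these two paths on $[0,T_{1}+T_{2}]$ produces an element of $\mathscr V_{T_{1}+T_{2}}(N_{0},N_{2})$, and the Cauchy--Schwarz inequality (which is the reason for choosing the $T_{i}$ proportional to the distances) converts the additive bound on total actions into an additive bound $\mathbb D(N_{0},N_{2})\leq\mathbb D(N_{0},N_{1})+\mathbb D(N_{1},N_{2})+O(\varepsilon)$, after which $\varepsilon\downarrow 0$ closes the argument.

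The main obstacle is a technical one in the concatenation step: one needs to ensure that the two pieces glue to an admissible element of $\mathscr V(N_{0},N_{2})$, i.e., that the resulting $\partial_{t}N_{t}$ remains in $L^{2}$ across the seam and that one can choose an appropriate $\kappa(V)$ on each subinterval. A standard convolution smoothing in time near the gluing point $t=T_{1}$ handles this without affecting the action bound by more than $O(\varepsilon)$. Everything else is bookkeeping.
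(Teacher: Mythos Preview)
Your proposal is correct and follows essentially the same route as the paper: Proposition~\ref{ADA} for indiscernibility, time reversal for symmetry, and reparametrization plus concatenation for the triangle inequality. The only cosmetic difference is that the paper packages the triangle inequality via the auxiliary length functional $\underline{\mathbb E}(Q)=\int_{0}^{T}\langle\hspace{-.1cm}\langle\nabla Q,\nabla Q\rangle\hspace{-.1cm}\rangle_{N_{t}}^{1/2}\,dt$, shows $\mathbb D=\underline{\mathbb D}$ by constant--speed reparametrization, and then concatenation is immediate; your choice of $T_{i}$ proportional to $\mathbb D(N_{i-1},N_{i})$ is exactly the same optimization carried out directly on the action, so the two arguments are interchangeable (and your worry about smoothing at the seam is unnecessary, since $\partial_{t}N_{t}\in L^{2}$ on each piece already gives $L^{2}$ on the whole interval).
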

\begin{proof}  
Let $N_{0}, N_{1} \in \mathcal B$. To help with the abbreviation of the forthcoming,
let us name by $\mathbb E$ the functional whose infimum produces
$\mathbb D(N_{0}, N_{1})$.  By Proposition \ref{XVAB} we may regard 
\textit{potentials} as the 
arguments of this functional:
$$
\mathbb E^{2}(Q) = \mathbb E^{2}_{N_{0},N_{1}}(Q)  =  \int_{0}^{1}
\langle \hspace{-.1cm} \langle \nabla Q  , \nabla Q  \rangle  \hspace{-.1cm}  \rangle_{N_{t}}~dt
=
 \int_{0}^{1}\int_{\mathbb T_{L}^{d}}
N_{t}|\nabla Q|^{2} + \Omega_{N_{t}}Q^{2}~dxdt
$$ 
where it is noted but notationally suppressed that 
$-\nabla Q \in \mathscr V(N_{0}, N_{1})$ (where $\mathscr V$ is as in Eq.~\eqref{BDF}).
We may also make the trivial addition of allowing the potential to achieve $N_{1}$ at times $T$ other than $t = 1$ in which case the functional becomes 
$$\mathbb E^2(Q) = T\int_0^T\langle \hspace{-.1cm} \langle \nabla Q  , \nabla Q  \rangle  \hspace{-.1cm}  \rangle_{N_{t}}~dt.$$  
Beyond the indiscernible property established above, we must show  
that
$\mathbb D(N_{0}, N_{1})  =  \mathbb D(N_{1}, N_{0})$ and establish the triangle inequality.  The first follows immediately from ``time reversal symmetry''; e.g., on $[0,1]$, 
$t^{\prime} = 1-t$, $K(t^{\prime})  =  -Q(1-t^{\prime})$ gives 
$\mathbb E^{2}_{N_{0}, N_{1}}(Q)  =  \mathbb E^{2}_{N_{1}, N_{0}}(K)$ and the result follows.

As for the triangle inequality, we shall be as succinct as possible since the result follows a transcription of the standard derivation from Riemannian geometry.  When the time interval is $[0, T]$, we  define
$\underline{\mathbb E}(Q)$ 
by taking the square root of the integrand in the definition of 
$\mathbb E^{2}(Q)$:
\[ \underline{\mathbb E}(Q) =  \int_{0}^{T} 
\sqrt{\langle \hspace{-.1cm} \langle \nabla Q  , \nabla Q  \rangle  \hspace{-.1cm}  \rangle_{N_{t}}}~dt.\]
We denote the corresponding minimized object by
$\underline{\mathbb D}(N_{0}, N_{1})$.  It is noted that $\underline{\mathbb E}(Q)$ is completely invariant under the full set of time changes: If $\vartheta(\tau) = \frac{dt(\tau)}{d\tau}$ and 
$$t \to \tau(t), ~~~Q(t) \to K(\tau)  =  \vartheta(\tau) \cdot Q(t(\tau)),$$ 
then 
$\underline{\mathbb E}(Q) = \underline{\mathbb E}(K)$ with $K$ driving $N_{0}$ to $N_{1}$ on the interval $[0, \tau(T)]$.

By convexity we have $\mathbb E^{2}(Q) \geq \underline{\mathbb E}^2(Q)$ and so
$\mathbb D^{2}(N_{0}, N_{1}) \geq \underline{\mathbb D}^2(N_{0}, N_{1})$.
  On the other hand, defining 
 $$\underline{\mathbb E}_{t} := \int_0^t\langle \hspace{-.1cm} 
  \langle \nabla Q, \nabla Q 
  \rangle  \hspace{-.1cm}  \rangle_{N_{t^{\prime}}}^{\frac{1}{2}}~dt^{\prime},~~~t \leq T,$$
and reparameterizing with 
$$\tau = \tau(t)  =  \underline{\mathbb E}_{t},~~~K(\tau) = \left[\left(\frac{d\underline{\mathbb E}_{t}}{dt}\right)^{-1}(t(\tau))\right] \cdot Q(t(\tau)),$$ it is seen that in the new variables, all integrands are identically one and so we have 
$$
\underline{\mathbb E}^{2}(Q)  =  \underline{\mathbb E}^{2}(K) =
\underline{\mathbb E}(K)\int_{0}^{\underline{\mathbb E}_{T}}d\tau =
\tau(T)\int_{0}^{\tau(T)} \hspace{-.2 cm}
\langle \hspace{-.1cm} \langle  \nabla K , \nabla K  \rangle  \hspace{-.1cm}  \rangle_{N_{\tau}}~d\tau
=  \mathbb E^{2}(K).
$$
Taking the infimum over $K$'s (or $Q$'s) we arrive at
$\mathbb D(N_{0}, N_{1})  =  \underline{\mathbb D}(N_{0}, N_{1})$.  The triangle inequality is immediate since given $N_{0}, N_{1}, N_{2} \in \mathcal B$ we can attempt to minimize 
$\underline{\mathbb E}_{N_{0}, N_{2}}(\cdot)$ by considering paths which visit $N_{1}$ on the way to 
$N_{2}$ and so we conclude that
$\mathbb D(N_{0}, N_{2}) \leq \mathbb D(N_{0}, N_{1}) + \mathbb D(N_{1}, N_{2})$.
\end{proof}

\noindent {\large \textbf{Acknowledgements.}}

\footnotesize \noindent L.~C.~ was supported under the NSF Grant PHY 1205295 and to some extent DMS 0805486.

\noindent H.~K.~L. was partially supported under the NSF PostDoctoral Research Fellowship 1004735 while in residence at Caltech.

\textcolor{blue}{We thank the referee for various useful comments and suggesting a slight reformulation of one of the variational problems.}

\end{document}